\newcommand{\Div}{{\rm Div\,}}
\newcommand{\Om}{\Omega}
\newcommand{\om}{\omega}
\newcommand{\pa}{\partial}
\newcommand{\ov}{\overline}
\newcommand{\I}{{\rm Im}}
\newcommand{\Rt}{{\rm Re}}
\newcommand{\curl}{{\rm curl\,}}
\newcommand{\dive}{{\rm div\,}}
\newcommand{\wid}{\widetilde}
\newcommand{\na}{\nabla}
\newcommand{\mat}{\mathbb}
\newcommand{\R}{{\mat R}}
\newcommand{\N}{{\mat N}}
\newcommand{\C}{{\mat C}}
\newcommand{\be}{\begin{eqnarray}}
\newcommand{\ben}{\begin{eqnarray*}}
\newcommand{\en}{\end{eqnarray}}
\newcommand{\enn}{\end{eqnarray*}}
\newtheorem{remark}[theorem]{Remark}
\begin{document}
\renewcommand{\theequation}{\arabic{section}.\arabic{equation}}

\title{\bf
Boundary determination of electromagnetic parameters from local data}
\author{Chengyu Wu\thanks{School of Mathematics and Statistics, Xi'an Jiaotong University,
Xi'an 710049, Shaanxi, China ({\tt wucy99@stu.xjtu.edu.cn})}
\and
Jiaqing Yang\thanks{School of Mathematics and Statistics, Xi'an Jiaotong University,
Xi'an 710049, Shaanxi, China ({\tt jiaq.yang@xjtu.edu.cn})}
}
\date{}
\maketitle


\begin{abstract}
  In this paper, we extend and simplify the methods in \cite{CJ24} to improve the results on uniqueness of the boundary determination for the Maxwell equation. In particular, we show that the electromagnetic parameters are uniquely determined to infinite order at the boundary from the local admittance map, disregarding the presence of an unknown obstacle, where actually only the local Cauchy data of the fundamental solution are used. The proof mainly relies on an elaborate singularity analysis on certain singular solutions to the Maxwell equation. 
\end{abstract}

\begin{keywords}
 Maxwell equation, inverse problem, partial data, embedded obstacle, singularity analysis. 
\end{keywords}

\begin{AMS}
35R30.
\end{AMS}

\pagestyle{myheadings}
\thispagestyle{plain}
\markboth{C. Wu and J. Yang}{Boundary determination of electromagnetic parameters from local data}

\section{Introduction}\label{sec1}
\setcounter{equation}{0}

Let $\Om\subset\R^3$ be a bounded domain with smooth boundary. Denote by $D$ a bounded domain such that $\pa D\in C^\infty$, $D\subset\subset\Om$ and $\Om\setminus\ov D$ is connected. Consider the boundary value problem of finding the electromagnetic fields $E$ and $H$ satisfying 
\be\label{2.1}
\left\{
\begin{array}{ll}
	\curl E-i\om\mu H=0~~~&{\rm in}~\Om\setminus\ov D,\\
	\curl H+i\om\gamma E=0~~~&{\rm in}~\Om\setminus\ov D,\\
	\nu\times E=f~~~&{\rm on}~\pa\Om, \\ 
	\mathcal{B}(E)=0~~~&{\rm on}~\pa D. 
\end{array}
\right.
\en
Here, $\om>0$ is the frequency, $\gamma=\varepsilon+i\sigma/\om$, and the magnetic permeability $\mu$, electric permittivity $\varepsilon$, conductivity $\sigma$ are in $C^\infty(\ov\Om)$. Throughout the paper, we always assume that $\mu,|\gamma|>0$ in $\ov\Om$. Moreover, $\mathcal{B}$ stands for the boundary condition on $\pa D$, which corresponds to a perfect conductor condition if $\mathcal{B}(E)=\nu\times E$ and an impedance condition if $\mathcal{B}(E)=\nu\times\curl E+\rho(\nu\times E)\times\nu$ with $\rho\in C^\infty(\pa D,\C)$. 

Define $H(\curl,\Om\setminus\ov D)$ to be the Hilbert space 
\ben
  H(\curl,\Om\setminus\ov D):=\{u\in[L^2(\Om\setminus\ov D)]^3,\curl u\in[L^2(\Om\setminus\ov D)]^3\} 
\enn
equipped with the inner product $(u,v):=(u,v)_{L^2}+(\curl u,\curl v)_{L^2}$. For $s\in\R$, define 
\ben
  H^s_t(\pa\Om):=\{u\in[H^s(\pa\Om)]^3,u\cdot\nu=0\}. 
\enn
Denote by $\Div$ the surface divergence on $\pa\Om$. Further, define 
\ben
  H^{-1/2}_{\Div}(\pa\Om):=\{u\in H^{-1/2}_t(\pa\Om),\Div u\in H^{-1/2}(\pa\Om)\}. 
\enn

It is known that problem \eqref{2.1} has a unique solution $(E,H)\in H(\curl,\Om\setminus\ov D)\times H(\curl,\Om\setminus\ov D)$ for each $f\in H^{-1/2}_{\Div}(\pa\Om)$ except for a discrete set of magnetic resonance frequencies $\{\om_n\}$. We here assume that $\om$ is not a resonance frequency. Then the admittance map $\Lambda: H^{-1/2}_{\Div}(\pa\Om)\rightarrow H^{-1/2}_{\Div}(\pa\Om)$, 
\ben
  \Lambda: f\mapsto\nu\times H|_{\pa\Om}, 
\enn
is well-defined. In the present paper, we will study the inverse problem of determining the electromagnetic parameters $\mu$ and $\gamma$ from the knowledge of $\Lambda$. In particular, let $\Gamma\subset\pa\Om$ be a nonempty open subset. We shall prove: 
\begin{theorem}\label{thm2.1}
	For $(\mu_i,\varepsilon_i,D_i,\mathcal{B}_i)$, $i=1,2$, suppose $\Lambda_1f=\Lambda_2f$ on $\Gamma$ for all $f\in H^{-1/2}_{\Div}(\pa\Om)$ with ${\rm{supp}}f\subset\Gamma$, then 
    \ben
	 D^\alpha\mu_1=D^\alpha\mu_2,~D^\alpha\gamma_1=D^\alpha\gamma_2~on~\Gamma 
	\enn
	for all $|\alpha|\geq0$. 
\end{theorem}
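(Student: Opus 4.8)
\bigskip
\noindent\textbf{Strategy of the proof.}\quad The plan is to establish Theorem~\ref{thm2.1} by a blow-up analysis carried out entirely in a small half-ball at a boundary point, so that the obstacle never enters. Fix $x_0\in\G$, let $\nu$ be the outward unit normal at $x_0$, and set $y=x_0+\de\nu$ with $\de>0$ small, so $y\notin\ov\Om$. Let $\Phi_\bt(\cdot,y)$ be the order-$\bt$ multipole (for a multi-index $\bt$) of a fixed electric or magnetic dipole field with pole at $y$, singular only at $y$. For $i=1,2$, let $(E_i^\de,H_i^\de)$ solve \eqref{2.1} for the $i$-th set of parameters $(\mu_i,\vep_i,D_i,\mathcal B_i)$ with boundary datum $f=\chi\,(\nu\ti\Phi_\bt(\cdot,y))\in H^{-1/2}_{\Div}(\pa\Om)$, where $\chi\in C_c^\infty(\G)$ equals $1$ on $\G\cap B(x_0,2\rho)$ for a fixed small $\rho>0$. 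By construction $\nu\ti E_1^\de=\nu\ti E_2^\de=f$ on $\pa\Om$; since ${\rm supp}\,f\subset\G$ the hypothesis gives $\nu\ti H_1^\de=\La_1 f=\La_2 f=\nu\ti H_2^\de$ on $\G$, hence the tangential components of $H_1^\de$ and $H_2^\de$ also coincide on $\G$; and since $y\notin\ov\Om$ the fields are genuine solutions of the $i$-th Maxwell system near $x_0$ which develop, as $\de\to0^+$, a singularity at $x_0$ of order governed by $\bt$.

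Put $B^+:=B(x_0,\rho)\cap\Om$ with curved part $S_\rho:=\pa B(x_0,\rho)\cap\Om$ and flat part $\G_\rho:=\G\cap B(x_0,\rho)$. Pairing the Maxwell relations for medium $1$ against those for medium $2$ over $B^+$ and integrating by parts gives
\ben
 i\om\Int_{B^+}\big[(\mu_1-\mu_2)H_1^\de\cdot H_2^\de-(\g_1-\g_2)E_1^\de\cdot E_2^\de\big]\,dx=\Int_{S_\rho}\big[(\nu\ti E_1^\de)\cdot H_2^\de-(\nu\ti E_2^\de)\cdot H_1^\de\big]\,dS,
\enn
where on $\G_\rho$ the integrand $(\nu\ti E_1^\de)\cdot H_2^\de-(\nu\ti E_2^\de)\cdot H_1^\de$ vanishes because $\nu\ti E_1^\de=\nu\ti E_2^\de$ there while the tangential parts of $H_1^\de,H_2^\de$ agree. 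No portion of $D_1$ or $D_2$ appears: the identity, and the whole argument below, are local at $x_0$. It remains to match the $\de\to0^+$ asymptotics of the two sides.

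The core of the proof is the singularity analysis of $(E_i^\de,H_i^\de)$ near $x_0$. In boundary normal coordinates I would derive an expansion $(E_i^\de,H_i^\de)=(\mathcal E^\de,\mathcal H^\de)+\sum_{j\ge1}(E_i^{(j),\de},H_i^{(j),\de})+R_i^\de$, whose leading profile $(\mathcal E^\de,\mathcal H^\de)$ is the order-$\bt$ multipole of the fundamental solution of the constant-coefficient Maxwell operator with parameters frozen to $\mu_i(x_0),\g_i(x_0)$ -- common to $i=1,2$ once the base case is done -- and whose $j$-th corrector $(E_i^{(j),\de},H_i^{(j),\de})$ is a universal expression in the derivatives of $\mu_i,\g_i$ at $x_0$ of order $\le j$, is one order more regular, and is smaller by a factor $\de$ than its predecessor both near $x_0$ and on $S_\rho$. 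Substituting this and the Taylor expansions of $\mu_1-\mu_2,\g_1-\g_2$ at $x_0$ into the identity and rescaling $x=x_0+\de z$, one argues by induction on the order $k$. Base case $k=0$: an electric dipole makes the left-hand side equal to $c_\g\,\de^{-3}(\g_1-\g_2)(x_0)+o(\de^{-3})$ with $c_\g\ne0$ while the right-hand side is $O(\de^{-2})=o(\de^{-3})$, forcing $\g_1(x_0)=\g_2(x_0)$; a magnetic dipole forces $\mu_1(x_0)=\mu_2(x_0)$ in the same way. Inductive step: assume $D^\al\mu_1=D^\al\mu_2$ and $D^\al\g_1=D^\al\g_2$ on $\G$ for all $|\al|\le k-1$; then $\mu_1-\mu_2$ and $\g_1-\g_2$, together with all their derivatives of order $\le k$ other than $\pa_\nu^k$, vanish on $\G$, while the correctors of order $<k$ are common to the two media, so on $S_\rho$ the most singular contributions cancel and the right-hand side of the identity is $o$ of the leading term of the left-hand side. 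That leading term, computed with an electric (resp. magnetic) multipole of order $|\bt|=k$, equals a nonzero constant times $\pa_\nu^k(\g_1-\g_2)(x_0)$ (resp. $\pa_\nu^k(\mu_1-\mu_2)(x_0)$), which must therefore vanish. Hence all order-$k$ derivatives of $\mu_1-\mu_2$ and $\g_1-\g_2$ vanish on $\G$; letting $x_0$ range over the open set $\G$ advances the induction.

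The main obstacle is precisely this singularity analysis for the \emph{vector} Maxwell system: constructing and controlling, with remainders uniform in $\de$, the above expansion near $x_0$ and on $S_\rho$ with the one-power-of-$\de$ gain per order, while respecting the constraints $\dive(\g_i E_i^\de)=0$, $\dive(\mu_i H_i^\de)=0$ and the coupling of $\mu$ and $\g$; and establishing the accompanying nondegeneracy -- that the integral over the half-space of the relevant squared leading profile against the monomial $z_N^k$ does not vanish for a well-chosen polarization -- so that no derivative of $\mu_i$ or $\g_i$ escapes detection and the two parameters are disentangled at each order. By contrast, the obstacle $D$ needs no attention: since everything takes place in the fixed half-ball $B^+$ at $x_0\in\pa\Om$, the argument is completely insensitive to $D_i$ and to the boundary operator $\mathcal B_i$.
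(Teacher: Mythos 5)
Your base case is essentially the paper's Lemma 4.1: the bilinear integration-by-parts identity over a small region touching $\Gamma$, with a dipole placed at $z_\delta=x_0+\delta\nu(x_0)$ outside $\ov\Om$, and the observation that the equality of the local Cauchy data kills the boundary term on $\Gamma$ so that the unknown obstacle never enters. That part is sound. The gap is the inductive step: everything you need there — the expansion $(E_i^\delta,H_i^\delta)=(\mathcal E^\delta,\mathcal H^\delta)+\sum_j(E_i^{(j),\delta},H_i^{(j),\delta})+R_i^\delta$ with remainders uniform in $\delta$ and a one-power-of-$\delta$ gain per corrector for the variable-coefficient Maxwell system, and the nondegeneracy of the half-space integral that is supposed to isolate $\pa_\nu^k(\mu_1-\mu_2)(x_0)$ and $\pa_\nu^k(\gamma_1-\gamma_2)(x_0)$ — is exactly what you flag as "the main obstacle" and do not prove. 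These two items are the mathematical content of the theorem, not technical finishing touches: they are the counterparts of the paper's Theorems 3.2/3.4 (the full singularity of the solution obtained by iterating the Stratton--Chu/Green representation through layer and volume potentials, with $H^{m+1}$-uniform remainders for every $m$) and of Lemma 3.6 together with Theorems 3.7--3.8 (an explicit computation of a singular integral showing that $\|\na G_{k}(\na q\cdot H_{0,\delta})\|$ explodes on the boundary precisely when $q$ vanishes to order $m$ on $\Gamma_\varepsilon$ but $\pa_\nu^{m+1}q(x_0)\neq0$). A proof that simply asserts these facts has not proved the theorem.

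Two further cautions about the route you chose. First, the nondegeneracy is genuinely delicate: in the paper's computation the relevant quantity behaves like derivatives of $\delta^m\ln\delta$, producing only a marginal $\delta^{-1/2}$ divergence after the chosen normalization; the clean power counting you assert (leading term $c\,\delta^{-3}$ versus $O(\delta^{-2})$, and its higher-order analogues after multiplying by the vanishing weight $x_N^k$ and raising the multipole order) is exactly the kind of estimate that can fail by cancellation and must be computed, not assumed — with higher-order multipoles the divergence constants involve oscillatory combinatorial sums whose nonvanishing for the Maxwell polarizations you would have to verify at every order. Second, note that the paper does not detect higher derivatives through the bilinear identity at all: beyond order zero it works linearly, showing (Lemmas 4.2--4.7) that equality of the local admittance maps forces the differences of the expansion functions $\psi_{m,\delta}^{(i)},\Psi_{m,\delta}^{(i)}$ to stay bounded in $H^{l+3/2}$-type norms, which collapses to boundedness of $\na G_{k_0}(\na(\mu_1-\mu_2)\cdot H_{0,\delta})$ and then contradicts the exploding-norm theorem. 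If you want to pursue your bilinear multipole scheme instead, you must build the uniform expansion and prove the nonvanishing of the leading half-space integrals yourself; as written, the proposal is a strategy outline with its two load-bearing steps missing.
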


Substantial progresses have been made for the global determination of electromagnetic parameters from complete data, such as \cite{PLE93,PE96}. However, it is assumed in these work that the material parameters are known to some order at the boundary, which is the reason that Joshi and McDowall considered the boundary determination in \cite{MS00,SR97}. Following the idea in \cite{JG89}, they proved that from the complete admittance map the parameters are uniquely determined to infinite order at the boundary by computing the symbol of $\Lambda_1-\Lambda_2$ and $\Lambda_1^{-1}-\Lambda_2^{-1}$, which thus completes the proof of global determination in \cite{PLE93,PE96}. 

Recently, the inverse problem concerning the partial data is gaining more and more interests. For the global determination from partial data we refer to \cite{PPM09,FP18}, where, nevertheless, the global uniqueness results are obtained also assuming the boundary determination results. It is therefore pertinent to further study the boundary determination in partial data case, for which we note that the methods in \cite{MS00,SR97} do not apply to the partial data case, since there are not enough information for $\Lambda^{-1}$ if only knowing partial $\Lambda$. 

In this paper, we extend and simplify the novel method proposed in \cite{CJ24} to give a new proof and slightly improve the boundary determination results in \cite{MS00,SR97} by considering the local admittance map $\Lambda$, and moreover, assuming the presence of an unknown embedded obstacle $(D,\mathcal{B})$. Particularly, we use only the local Cauchy data for the fundamental solution to show that the electromagnetic parameters $\mu$ and $\gamma$ are uniquely determined to infinite order at the boundary, disregarding the unknown obstacle. Our methods mainly depends on the detailed singularity analysis for certain singular solutions and some other relating functions. To be exact, we derive the singularity in the $H^m$ sense for aribitrary $m$ for fixed solutions behaving like $\curl(1/|x-z|)$ and the volume potential of the fundamental solution. Specifically, we utilize the Green's representatioin to derive the full singularity of the solutions, and rigorously study some associated singular integrals to obtain the singularity of the volume potential of the fundamental solution. These explicit singularity allows us to ignore the unknown obstacle and directly compare the singularity of solutions corresponding to different $\mu$ and $\gamma$ locally at the boundary. Any diffenence of the parameters at the boundary would contradict with the singularity obtained previously and thus the boundary determination result follows. We also note that the methods in \cite{MS00,SR97} essentially relied on turning the Maxwell equation to a second order differential equation with the principal term being the Laplacian $\Delta$, while we are working more directly on the Maxwell equation, which indicates that our methods is more handleable when extends to other equations. 

The paper is organized as follows. In section \ref{sec3}, we show some technical results on the singularity to prepare for the proof of the main theorems. In particular, in section \ref{sec3.1} we propose a simple method to derive the complete singularity of the solutions to problem \eqref{2.1} with certain singular boundary data. Then in section \ref{sec3.2}, we compute explicitly the norms of the volume potential of the fundamental solution. Finally, in section \ref{sec4} we prove the main results Theorems \ref{thm2.1}. 

\section{Some preparations}\label{sec3}
\setcounter{equation}{0}
In this section, we do some elaborate singularity analysis for the preparations of the main theorems. We shall derive the complete description of the singularity of the solution to problem (\ref{2.1}) with the boundary data being the fundamental solution to the Maxwell equation. The method mainly depends on the Green's representation of the solutions and thus can be applied to many other cases. Then we simplify the methods in \cite{CJ24} on computing the Sobolev norms at the boundary for the volume potential of fundamental solution, which plays a significant role in later proof.

\subsection{Singular solutions}\label{sec3.1}
We first introduce some useful notations. Let $B_r(x)$ denote the open ball centered at $x\in\R^3$ with radius $r>0$. For balls centered at the origin, we abbreviate by $B_r$. Denote by $\Phi_k(x,y)$ $(\Rt k,\I k\geq0)$ the fundamental solution to $\Delta+k^2$ in $\R^3$, which is 
\ben
\Phi_k(x,y)=\frac{e^{ik|x-y|}}{4\pi|x-y|},~~~\quad x\neq y. 
\enn
Define the single- and double-layer potentials by 
\begin{align*}
(\mathcal{S}_k\varphi)(x)&:=\int_{\pa\Om}\Phi_k(x,y)\varphi(y)ds(y),~~x\in\R^3\setminus\pa\Om, \\
(\mathcal{D}_k\varphi)(x)&:=\int_{\pa\Om}\frac{\pa\Phi_k(x,y)}{\pa\nu(y)}\varphi(y)ds(y),~~x\in\R^3\setminus\pa\Om. 
\end{align*}
Also, we give the definitions of the associated boundary integral operators, for $x\in\pa\Om$, 
\begin{align*}
(S_k\varphi)(x)&:=\int_{\pa\Om}\Phi_k(x,y)\varphi(y)ds(y), \\
(K_k\varphi)(x)&:=\int_{\pa\Om}\frac{\pa\Phi_k(x,y)}{\pa\nu(y)}\varphi(y)ds(y), \\
(K'_k\varphi)(x)&:=\int_{\pa\Om}\frac{\pa\Phi_k(x,y)}{\pa\nu(x)}\varphi(y)ds(y),\\
(M_k\varphi)(x)&:=\nu\times\curl\int_{\pa\Om}\Phi_k(x,y)\varphi(y)ds(y). 
\end{align*}
Further, the volume potential is given by 
\ben
(\mathcal{G}_{k}\varphi)(x):=\int_{\Om}\Phi_k(x,y)\varphi(y)dy,~~~x\in\R^3. 
\enn
When the volume potential (or its derivatives) is restricted on the boundary $\pa\Om$, we denote by $G_{k}$. Since we require $\pa\Om\in C^\infty$, the following mapping properties hold (see details in \cite{GW08,AK89,WM00}). 
\begin{theorem}\label{thm3.1}
	For $s\geq0$, the potentials $\mathcal{S}_k:H^{s-1/2}(\pa\Om)\rightarrow H^{s+1}(\Om)$, $\mathcal{D}_k:H^{s+1/2}(\pa\Om)\rightarrow H^{s+1}(\Om)$ and $\mathcal{G}_{k}:H^s(\Om)\rightarrow H^{s+2}(\Om)$ are bounded. Moreover, the boundary integral operators $S_k,K_k,K'_k:H^{s-1/2}(\pa\Om)\rightarrow H^{s+1/2}(\pa\Om)$ and $M_k:H^{s-1/2}_t(\pa\Om)\rightarrow H^{s+1/2}_t(\pa\Om)$ are bounded for $s\geq0$. 
\end{theorem}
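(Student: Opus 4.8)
The plan is to exploit that all six operators are generated by the single kernel $\Phi_k$, whose diagonal singularity agrees, up to a smooth factor, with that of the Laplace kernel $\Phi_0(x,y):=1/(4\pi|x-y|)$. Writing $\Phi_k-\Phi_0=(e^{ik|x-y|}-1)/(4\pi|x-y|)=\sum_{n\ge1}(ik)^n|x-y|^{n-1}/(4\pi\,n!)$, the even powers of $|x-y|$ are $C^\infty$ across the diagonal and the odd powers are strictly less singular than $|x-y|^{-1}$; hence $\Phi_k$ is a classical symbol whose principal part is that of $\Phi_0$, and the contribution of $\Phi_k-\Phi_0$ to each operator is of strictly lower order and only improves the mapping. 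It therefore suffices to determine the orders of the operators attached to $\Phi_0$ and then invoke the Sobolev continuity of classical pseudodifferential operators of order $m$ on a closed $C^\infty$ manifold, namely $H^s\to H^{s-m}$ for every $s$, which yields the stated estimates simultaneously for all $s\ge0$.

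First I would treat the boundary operators on the $2$-dimensional surface $\pa\Om$. The kernel of $S_0$ is $1/(4\pi|x-y|)$, weakly singular, defining an operator of order $-1$, so $S_k:H^{s-1/2}(\pa\Om)\to H^{s+1/2}(\pa\Om)$. The double-layer kernel $\pa\Phi_0(x,y)/\pa\nu(y)=(x-y)\cdot\nu(y)/(4\pi|x-y|^3)$ is a priori of Calder\'on--Zygmund type (order $0$), but the smoothness of $\pa\Om$ forces $(x-y)\cdot\nu(y)=O(|x-y|^2)$, so the kernel is in fact weakly singular and $K_k$, and likewise $K'_k$, is of order $-1$. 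To make this uniform in $s$ I would localise in boundary charts, flatten $\pa\Om$, and compare the flattened operators with the Riesz potential of order $1$ on $\R^2$. \emph{The delicate point, and the one I expect to be the main obstacle, is $M_k$}: since $\curl_x(\Phi_k\,\varphi)=\na_x\Phi_k\times\varphi$ has kernel of size $|x-y|^{-2}$, the operator $M_k$ is a priori only of order $0$, and one must exhibit a first-order cancellation in its principal symbol. This cancellation comes from the tangential structure: after applying $\nu(x)\times(\,\cdot\,)$ and projecting onto tangential fields, the order-$0$ contribution annihilates by virtue of $(x-y)\cdot\nu=O(|x-y|^2)$ together with the antisymmetry of the cross product, leaving $M_k$ of order $-1$, so that $M_k:H^{s-1/2}_t(\pa\Om)\to H^{s+1/2}_t(\pa\Om)$. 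Executing this reduction cleanly for every $s$ is the technical heart of the argument.

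Finally I would pass to the potentials mapping into $\Om$. For the single- and double-layer potentials, $u=\mathcal S_k\varphi$ and $u=\mathcal D_k\varphi$ solve $(\Delta+k^2)u=0$ in $\Om$, and the jump relations identify their interior Dirichlet traces as $S_k\varphi$ and $(-\tfrac12 I+K_k)\varphi$; by the previous step these traces lie in $H^{s+1/2}(\pa\Om)$ (for $\mathcal D_k$ one uses $\varphi\in H^{s+1/2}$ and $K_k\varphi\in H^{s+3/2}$). The a priori up-to-the-boundary regularity estimate for the Dirichlet problem of the elliptic operator $\Delta+k^2$ on the smooth domain $\Om$---which is an estimate for the given $u$ and hence needs no spectral assumption on $k$---then gives $u\in H^{s+1}(\Om)$ with $\|u\|_{H^{s+1}(\Om)}\le C(\|u|_{\pa\Om}\|_{H^{s+1/2}(\pa\Om)}+\|u\|_{L^2(\Om)})$, the $L^2$ norm being controlled by the base case $s=0$. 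For the volume potential, $u=\mathcal G_k\varphi$ satisfies $(\Delta+k^2)u=-\varphi$ in $\Om$, so interior elliptic regularity gives $u\in H^{s+2}_{\mathrm{loc}}(\Om)$; the remaining, genuinely boundary-sensitive point is one-sided regularity up to $\pa\Om$, which I would obtain by reducing to the Newtonian potential $\mathcal G_0$ and using that it is a classical pseudodifferential operator of order $-2$ satisfying the transmission property across the $C^\infty$ hypersurface $\pa\Om$, whose truncation to $\Om$ maps $H^s(\Om)\to H^{s+2}(\Om)$ \cite{WM00}; equivalently one proves tangential regularity near the boundary by the difference-quotient method and recovers the normal derivatives from the equation. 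Reassembling the three steps, together with the lower-order remainders isolated in the first paragraph, completes the proof.
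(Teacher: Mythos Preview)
The paper does not supply a proof of this theorem: it is stated as a known result with the parenthetical ``see details in \cite{GW08,AK89,WM00}'' and no argument is given. Your proposal therefore goes beyond what the paper provides, and the route you sketch---reduce $\Phi_k$ to $\Phi_0$ modulo a smoother remainder, identify the four boundary operators as classical pseudodifferential operators of order $-1$ on the closed surface $\pa\Om$, and handle the three potentials via jump relations together with elliptic boundary regularity (respectively the transmission property for $\mathcal{G}_k$)---is precisely the standard argument developed in those references. There is no conflict with the paper, only that you are filling in what the paper outsources.

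Your treatment of $M_k$ is the one place worth tightening. The cancellation is real, but its source is the tangentiality of the density rather than any antisymmetry of the cross product. Expanding $\nu(x)\times(\na_x\Phi_0(x,y)\times\varphi(y))=\na_x\Phi_0(x,y)\,(\nu(x)\cdot\varphi(y))-\varphi(y)\,\pa_{\nu(x)}\Phi_0(x,y)$, the second term carries the $K'_0$-kernel and is already weakly singular by the $(x-y)\cdot\nu=O(|x-y|^2)$ argument you gave; in the first term one uses $\nu(y)\cdot\varphi(y)=0$ to write $\nu(x)\cdot\varphi(y)=(\nu(x)-\nu(y))\cdot\varphi(y)=O(|x-y|)$, which knocks the apparent $|x-y|^{-2}$ singularity down to $|x-y|^{-1}$. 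This is the computation carried out in \cite{AK89}, and with it your conclusion that $M_k$ is of order $-1$ on tangential fields is correct.
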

\begin{theorem}\label{thm3.2}
	For $x_0\in\pa\Om$, choose $\delta_0>0$ sufficiently small such that $z_\delta:=x_0+\delta\nu(x_0)\in \R^3\setminus\ov\Om$ for $\delta\in(0,\delta_0)$. Set $\mu_0=\mu(x_0),\gamma_0=\gamma(x_0)$ and $k_0=\om\sqrt{\mu_0\gamma_0}$ $(\Rt k_0,\I k_0\geq0)$. Define $E_{0,\delta}:=\delta^{3/2}\curl(\nu(x_0)\Phi_{k_0}(x,z_\delta))$ and $H_{0,\delta}:=(1/i\om\mu_0)\curl E_{0,\delta}$ for $\delta\in(0,\delta_0)$. Let $(E_\delta,H_\delta)$ be the unique solution to problem \eqref{2.1} with the boundary data $\nu\times E_{0,\delta}$. Set $\varphi_{m,\delta}=\psi_{m,\delta}=\Phi_{m,\delta}=\Psi_{m,\delta}=0$ with $m=-1,0$, 
	\begin{align*}
	  \varphi_{1,\delta}:=&\;2i\om\nu\cdot\curl G_{k_0}(\gamma(\mu-\mu_0)H_{0,\delta})+2\pa_\nu G_{k_0}(\nabla\gamma\cdot E_{0,\delta})~~\qquad{\rm on}~\pa\Om, \\
	  \psi_{1,\delta}:=&\;-2i\om\nu\times\curl G_{k_0}(\mu(\gamma-\gamma_0)E_{0,\delta})+2\nu\times\nabla G_{k_0}(\nabla\mu\cdot H_{0,\delta}) \\
	  &\;-2\nu\times\nabla S_{k_0}(\nu\cdot(\mu-\mu_0)H_{0,\delta})\qquad\qquad\qquad\qquad\qquad\quad~~\;{\rm on}~\pa\Om, 
	\end{align*}
	and 
	\begin{align*}
	  \Phi_{1,\delta}:=&\;i\om\curl\mathcal{G}_{k_0}(\gamma(\mu-\mu_0)H_{0,\delta})+\nabla\mathcal{G}_{k_0}(\nabla\gamma\cdot E_{0,\delta})\qquad\qquad\quad~~{\rm in}~\Om, \\
	  \Psi_{1,\delta}:=&\;-i\om\curl\mathcal{G}_{k_0}(\mu(\gamma-\gamma_0)E_{0,\delta})+\nabla\mathcal{G}_{k_0}(\nabla\mu\cdot H_{0,\delta}) \\
	  &\;-\nabla\mathcal{S}_{k_0}(\nu\cdot(\mu-\mu_0)H_{0,\delta})\qquad\qquad\qquad\qquad\qquad\qquad\qquad~{\rm in}~\Om. 
	\end{align*}
	For $m\in\N$, define 
	\begin{align*}
	   \varphi_{m+1,\delta}:=&\;2\nu\cdot\curl G_{k_0}(\nabla\log\gamma\times\Phi_{m,\delta})+2i\om\nu\cdot\curl G_{k_0}(\gamma\Psi_{m,\delta}) \\
	   &\;-2k_0^2\nu\cdot G_{k_0}(\Phi_{m-1,\delta})+2K_{k_0}'\varphi_{m,\delta}+\varphi_{1,\delta}\qquad\qquad\qquad{\rm on}~\pa\Om, \\
	   \psi_{m+1,\delta}:=&\;2\nu\times\curl G_{k_0}(\na\log\mu\times\Psi_{m,\delta})-2i\om\nu\times\curl G_{k_0}(\mu\Phi_{m,\delta}) \\
	   &\;-2k_0^2\nu\times G_{k_0}(\Psi_{m-1,\delta})-2M_{k_0}\psi_{m,\delta}+\psi_{1,\delta}\qquad\qquad\quad\;\;{\rm on}~\pa\Om,
	\end{align*}
	and 
	\begin{align*}
	  \Phi_{m+1,\delta}:=&\;\curl\mathcal{G}_{k_0}(\nabla\log\gamma\times\Phi_{m,\delta})+i\om\curl\mathcal{G}_{k_0}(\gamma\Psi_{m,\delta})-k_0^2\mathcal{G}_{k_0}(\Phi_{m-1,\delta}) \\
	  &\;+\na\mathcal{S}_{k_0}\varphi_{m+1,\delta}+\Phi_{1,\delta}\qquad\qquad\qquad\qquad\qquad\qquad\qquad\quad{\rm in}~\Om, \\
	  \Psi_{m+1,\delta}:=&\;\curl\mathcal{G}_{k_0}(\na\log\mu\times\Psi_{m,\delta})-i\om\curl\mathcal{G}_{k_0}(\mu\Phi_{m,\delta})-k_0^2\mathcal{G}_{k_0}(\Psi_{m-1,\delta}) \\
	  &\;-\curl\mathcal{S}_{k_0}\psi_{m+1,\delta}+\Psi_{1,\delta}\qquad\qquad\qquad\qquad\qquad\qquad\qquad{\rm in}~\Om. 
	\end{align*}
	Then the following estimates 
	\begin{align*}
	  &\|\nu\cdot\gamma(E_\delta-E_{0,\delta})-\varphi_{m,\delta}\|_{H^{m+1/2}(\pa\Om)}+\|\gamma(E_\delta-E_{0,\delta})-\Phi_{m,\delta}\|_{[H^{m+1}(\Om\setminus\ov D)]^3}\leq C, \\
	  &\|\nu\times\mu(H_\delta-H_{0,\delta})-\psi_{m,\delta}\|_{H^{m+1/2}_t(\pa\Om)}+\|\mu(H_\delta-H_{0,\delta})-\Psi_{m,\delta}\|_{[H^{m+1}(\Om\setminus\ov D)]^3}\leq C 
	\end{align*}
	hold for $m\in\N\cup\{0\}$, where $C>0$ is a constant independent of $\delta\in(0,\delta_0)$. 
\end{theorem}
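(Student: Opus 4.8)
The plan is to establish the two families of estimates simultaneously by induction on $m$, exploiting the Green's representation formula for the Maxwell system. First I would set up the base case $m=0$: since $\varphi_{0,\delta}=\Phi_{0,\delta}=0$ (and likewise for $\psi,\Psi$), the assertion reduces to showing that $\|\nu\cdot\gamma(E_\delta-E_{0,\delta})\|_{H^{1/2}(\pa\Om)}$, $\|\gamma(E_\delta-E_{0,\delta})\|_{[H^{1}(\Om\setminus\ov D)]^3}$, and their magnetic counterparts, are bounded uniformly in $\delta$. The key observation is that $(E_{0,\delta},H_{0,\delta})$ solves the \emph{constant-coefficient} Maxwell system with parameters $\mu_0,\gamma_0$, so the difference $(E_\delta-E_{0,\delta}, H_\delta-H_{0,\delta})$ satisfies an inhomogeneous Maxwell system in $\Om\setminus\ov D$ whose right-hand side involves $(\mu-\mu_0)H_{0,\delta}$, $(\gamma-\gamma_0)E_{0,\delta}$, $\nabla\mu\cdot H_{0,\delta}$, $\nabla\gamma\cdot E_{0,\delta}$. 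The crucial gain is that $\mu-\mu_0$ and $\gamma-\gamma_0$ vanish at $x_0$, hence are $O(|x-x_0|)$ near the singularity $z_\delta$, which compensates one power of the singularity of $E_{0,\delta}\sim\delta^{3/2}|x-z_\delta|^{-3}$; together with the normalizing factor $\delta^{3/2}$ this yields the uniform boundedness after a standard scaling computation.

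For the inductive step, I would apply the Stratton--Chu / Green's representation formula to $\gamma(E_\delta-E_{0,\delta})$ and $\mu(H_\delta-H_{0,\delta})$, rewriting them as volume potentials $\mathcal{G}_{k_0}$ of the interior source terms plus single-layer potentials $\mathcal{S}_{k_0}$ (and the operator $M_{k_0}$ on the magnetic side) of the boundary traces. Substituting the Maxwell equations for the actual coefficients $(\mu,\gamma)$ produces source terms of the schematic form $\nabla\log\gamma\times(\text{field})$, $i\om\gamma\cdot(\text{field})$, and $k_0^2\cdot(\text{field})$, which is precisely the structure encoded in the recursive definitions of $\Phi_{m+1,\delta},\Psi_{m+1,\delta},\varphi_{m+1,\delta},\psi_{m+1,\delta}$. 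By the inductive hypothesis, the fields $\gamma(E_\delta-E_{0,\delta})$ and $\mu(H_\delta-H_{0,\delta})$ agree with $\Phi_{m,\delta},\Psi_{m,\delta}$ up to an $H^{m+1}$-bounded remainder; feeding this into the representation formula and invoking the mapping properties of Theorem \ref{thm3.1} (which gain two derivatives for $\mathcal{G}_{k_0}$, one for $\mathcal{S}_{k_0}$, and one for the boundary operators), the remainder is promoted to an $H^{m+2}$-bounded term, while the $\Phi_{m,\delta}$-parts reproduce exactly $\Phi_{m+1,\delta}$ modulo the lower-order pieces $\Phi_{m-1,\delta}$ and the fixed $\Phi_{1,\delta}$. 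The bookkeeping of the jump relations for $\mathcal{S}_{k_0}$ and the tangential trace of $\curl\mathcal{S}_{k_0}$ on $\pa\Om$ is what accounts for the boundary terms $\varphi_{m+1,\delta},\psi_{m+1,\delta}$ and the factor-of-$2$'s appearing in their definitions.

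The main obstacle I anticipate is making the passage from the representation formula to the recursion fully rigorous at the level of \emph{uniformity in $\delta$}: one must verify that every operator bound (e.g. the $H^s\to H^{s+2}$ bound for $\mathcal{G}_{k_0}$) is applied to an argument whose norm is controlled independently of $\delta$, and that the "remainder" terms genuinely land in the correct Sobolev space near the point $z_\delta$ where everything is singular. In particular, the source term $\nabla\log\gamma\times\Phi_{m,\delta}$ is only as regular as $\Phi_{m,\delta}$, so one does not gain regularity from the coefficient there; the two-derivative gain comes entirely from $\mathcal{G}_{k_0}$, and one must check this is not lost to the $\delta$-dependence of $\Phi_{m,\delta}$. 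Carefully tracking that the inductive hypothesis supplies a \emph{$\delta$-independent} constant $C$ at each stage, and that the potentials applied to the fixed (but $\delta$-dependent) pieces such as $(\mu-\mu_0)H_{0,\delta}$ inherit a uniform bound from the base-case scaling estimate, is the technical heart of the argument; the algebraic matching of the recursion is then routine once the Stratton--Chu formula is written out.
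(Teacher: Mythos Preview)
Your inductive scheme, the use of Green's representation to generate the recursion, and the appeal to Theorem~\ref{thm3.1} for the regularity gain all match the paper's proof. There is, however, one structural point you have overlooked: the solution $(E_\delta,H_\delta)$ lives only on $\Om\setminus\ov D$, whereas the recursive quantities $\Phi_{m,\delta},\Psi_{m,\delta}$ are built from volume potentials $\mathcal{G}_{k_0}$ integrating over all of $\Om$. If you apply the Stratton--Chu formula directly on $\Om\setminus\ov D$, you pick up additional layer potentials on $\pa D$ that are \emph{not} present in the recursion, and the volume integrals are over the wrong domain. You never say how these extra terms are disposed of.

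The paper handles this by first decomposing $(E_\delta,H_\delta)=(\wid E_\delta,\wid H_\delta)+(\hat E_\delta,\hat H_\delta)$, where $(\wid E_\delta,\wid H_\delta)$ solves the \emph{obstacle-free} problem on all of $\Om$ with the same boundary data, and $(\hat E_\delta,\hat H_\delta)$ solves a problem on $\Om\setminus\ov D$ with zero data on $\pa\Om$ and data $-\mathcal{B}(\wid E_\delta)$ on $\pa D$. The Green's representation and the entire induction are then carried out for $(\wid E_\delta,\wid H_\delta)$ on $\Om$, which matches the recursion exactly. The obstacle piece $(\hat E_\delta,\hat H_\delta)$ is shown, via interior regularity of Maxwell's equations (the singularity $z_\delta$ is bounded away from $D$), to be uniformly bounded in $H^m(\Om\setminus\ov D)$ for every $m$, so it contributes only to the harmless remainder. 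Without this splitting your argument does not close; once you insert it, the rest of your outline is essentially the paper's proof.
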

\begin{proof}
	We first spilt $(E_\delta,H_\delta)$ into two parts. Without loss of genearlity, we assume that $\om$ is also not a resonance frequency for problem \eqref{2.1} when $D=\emptyset$. Then $(\wid E_\delta,\wid H_\delta)$ given by 
	\be\label{3.1}
	\left\{
	\begin{array}{ll}
		\curl\wid E_\delta-i\om\mu\wid H_\delta=0~~~&{\rm in}~\Om,\\
		\curl\wid H_\delta+i\om\gamma\wid E_\delta=0~~~&{\rm in}~\Om,\\
		\nu\times\wid E_\delta=\nu\times E_{0,\delta}~~~&{\rm on}~\pa\Om 
	\end{array}
	\right.
	\en
	is well-defined. Further, it can be verified that $(\hat E_\delta,\hat H_\delta):=(E_\delta,H_\delta)-(\wid E_\delta,\wid H_\delta)$ in $\Om\setminus\ov D$ satisfies 
	\be\label{3.2}
	\left\{
	\begin{array}{ll}
		\curl\hat E_\delta-i\om\mu\hat H_\delta=0~~~&{\rm in}~\Om\setminus\ov D,\\
		\curl\hat H_\delta+i\om\gamma\hat E_\delta=0~~~&{\rm in}~\Om\setminus\ov D,\\
		\nu\times\hat E_\delta=0~~~&{\rm on}~\pa\Om, \\
		\mathcal{B}(\hat E_\delta)=-\mathcal{B}(\wid E_\delta)~~~&{\rm on}~\pa D. 
	\end{array}
	\right.
	\en
	
	Now we claim that for $m\in\N\cup\{0\}$, 
	\begin{align}\label{3.3}
	&\|\nu\cdot\gamma(\wid E_\delta-E_{0,\delta})-\varphi_{m,\delta}\|_{H^{m+1/2}(\pa\Om)}+\|\gamma(\wid E_\delta-E_{0,\delta})-\Phi_{m,\delta}\|_{[H^{m+1}(\Om)]^3}\leq C, \\ \label{3.4}
	&\|\nu\times\mu(\wid H_\delta-H_{0,\delta})-\psi_{m,\delta}\|_{H^{m+1/2}_t(\pa\Om)}+\|\mu(\wid H_\delta-H_{0,\delta})-\Psi_{m,\delta}\|_{[H^{m+1}(\Om)]^3}\leq C 
	\end{align}
	with the constant $C>0$ independent of $\delta\in(0,\delta_0)$. To start with, we rewrite \eqref{3.1} as 
	\be\label{3.5}
	\left\{
	\begin{array}{ll}
		\curl(\wid E_\delta-E_{0,\delta})-i\om\mu(\wid H_\delta-H_{0,\delta})=i\om(\mu-\mu_0)H_{0,\delta}~~~&{\rm in}~\Om,\\
		\curl(\wid H_\delta-H_{0,\delta})+i\om\gamma(\wid E_\delta-E_{0,\delta})=-i\om(\gamma-\gamma_0)E_{0,\delta}~~~&{\rm in}~\Om,\\
		\nu\times(\wid E_\delta-E_{0,\delta})=0~~~&{\rm on}~\pa\Om. 
	\end{array}
	\right.
	\en
	Direct calculation shows that $\|(\mu-\mu_0)H_{0,\delta}\|_{[H^1(\Om)]^3}+\|(\gamma-\gamma_0)E_{0,\delta}\|_{[H^1(\Om)]^3}\leq C$ uniformly for $\delta\in(0,\delta_0)$, which implies by the regularity of Maxwell's equation that $\|\wid E_{\delta}-E_{0,\delta}\|_{[H^1(\Om)]^3}+\|\wid H_\delta-H_{0,\delta}\|_{[H^1(\Om)]^3}\leq C$ uniformly for $\delta\in(0,\delta_0)$. By the trace theorem we then see that \eqref{3.3} and \eqref{3.4} hold for $m=0$. 
	
	From \cite[Theorem 3.25]{AF15}, we have the representations in $\Om$ 
	\begin{align*}
	  \gamma(\wid E_\delta-E_{0,\delta})=&\;\curl\mathcal{G}_{k_0}(\curl(\gamma(\wid E_\delta-E_{0,\delta})))-\na\mathcal{G}_{k_0}(\dive(\gamma(\wid E_\delta-E_{0,\delta}))) \\
	  &\;-k_0^2\mathcal{G}_{k_0}(\gamma(\wid E_\delta-E_{0,\delta}))+\na\mathcal{S}_{k_0}(\nu\cdot\gamma(\wid E_\delta-E_{0,\delta})) 
	\end{align*}
	and 
	\begin{align*}
	  \mu(\wid H_\delta-H_{0,\delta})=&\;\curl\mathcal{G}_{k_0}(\curl(\mu(\wid H_\delta-H_{0,\delta})))-\na\mathcal{G}_{k_0}(\dive(\mu(\wid H_\delta-H_{0,\delta}))) \\
	  &\;-k_0^2\mathcal{G}_{k_0}(\mu(\wid H_\delta-H_{0,\delta}))-\curl\mathcal{S}_{k_0}(\nu\times\mu(\wid H_\delta-H_{0,\delta})) \\
	  &\;+\na\mathcal{S}_{k_0}(\nu\cdot\mu(\wid H_\delta-H_{0,\delta})). 
	\end{align*}
	In view of the equations in \eqref{3.5}, we deduce that 
	\begin{align}\label{3.6}\nonumber
	  &\quad\curl\mathcal{G}_{k_0}(\curl(\gamma(\wid E_\delta-E_{0,\delta}))) \\ \nonumber
	  &=\curl\mathcal{G}_{k_0}(\na\log\gamma\times\gamma(\wid E_\delta-E_{0,\delta}))+\curl\mathcal{G}_{k_0}(\gamma\curl(\wid E_\delta-E_{0,\delta})) \\ \nonumber
	  &=\curl\mathcal{G}_{k_0}(\na\log\gamma\times\gamma(\wid E_\delta-E_{0,\delta}))+i\om\curl\mathcal{G}_{k_0}(\gamma\mu(\wid H_\delta-H_{0,\delta})) \\
	  &\quad+i\om\curl\mathcal{G}_{k_0}(\gamma(\mu-\mu_0)H_{0,\delta})
	\end{align}
	and 
	\begin{align}\label{3.7}\nonumber
	  &\quad\curl\mathcal{G}_{k_0}(\curl(\mu(\wid H_\delta-H_{0,\delta}))) \\ \nonumber
	  &=\curl\mathcal{G}_{k_0}(\na\log\mu\times\mu(\wid H_\delta-H_{0,\delta}))-i\om\curl\mathcal{G}_{k_0}(\mu\gamma(\wid E_\delta-E_{0,\delta})) \\ 
	  &\quad-i\om\curl\mathcal{G}_{k_0}(\mu(\gamma-\gamma_0)E_{0,\delta}). 
	\end{align}
	Further, it is derived that in $\Om$
	\begin{align}\label{3.8}
	  \dive(\gamma(\wid E_\delta-E_{0,\delta}))&=-\dive((\gamma-\gamma_0)E_{0,\delta})=-\na\gamma\cdot E_{0,\delta}, \\ \label{3.9}
	  \dive(\mu(\wid H_\delta-H_{0,\delta}))&=-\dive((\mu-\mu_0)H_{0,\delta})=-\na\mu\cdot H_{0,\delta}. 
	\end{align}
	Moreover, since $\nu\times(\wid E_\delta-E_{0,\delta})=0$ on $\pa\Om$, we have $\nu\cdot\curl(\wid E_\delta-E_{0,\delta})=0$ on $\pa\Om$, 
	which indicates from the first equation in \eqref{3.5} that $\nu\cdot\mu(\wid H_\delta-H_{0,\delta})=\nu\cdot(\mu-\mu_0)H_{0,\delta}$ on $\pa\Om$ and thus 
	\be\label{3.10}
	  \na\mathcal{S}_{k_0}(\nu\cdot\mu(\wid H_\delta-H_{0,\delta}))=-\na\mathcal{S}_{k_0}(\nu\cdot(\mu-\mu_0)H_{0,\delta}). 
	\en
	Therefore, combining $\eqref{3.6}-\eqref{3.10}$, we see the representations in $\Om$ become 
	\begin{align}\label{3.11} \nonumber
	  \gamma(\wid E_\delta-E_{0,\delta})=&\;\curl\mathcal{G}_{k_0}(\na\log\gamma\times\gamma(\wid E_\delta-E_{0,\delta}))+i\om\curl\mathcal{G}_{k_0}(\gamma\mu(\wid H_\delta-H_{0,\delta})) \\
	  &\;-k_0^2\mathcal{G}_{k_0}(\gamma(\wid E_\delta-E_{0,\delta}))+\na\mathcal{S}_{k_0}(\nu\cdot\gamma(\wid E_\delta-E_{0,\delta}))+\Phi_{1,\delta}
	\end{align}
	and 
	\begin{align}\label{3.12}\nonumber
	  \mu(\wid H_\delta-H_{0,\delta})=&\;\curl\mathcal{G}_{k_0}(\na\log\mu\times\mu(\wid H_\delta-H_{0,\delta}))-i\om\curl\mathcal{G}_{k_0}(\mu\gamma(\wid E_\delta-E_{0,\delta})) \\
	  &-k_0^2\mathcal{G}_{k_0}(\mu(\wid H_\delta-H_{0,\delta}))-\curl\mathcal{S}_{k_0}(\nu\times\mu(\wid H_\delta-H_{0,\delta}))+\Psi_{1,\delta}. 
	\end{align}
	Restricting to the boundary $\pa\Om$, we further deduce that on $\pa\Om$ 
	\begin{align}\label{3.13}\nonumber
	  &\;\nu\cdot\gamma(\wid E_\delta-E_{0,\delta}) \\ \nonumber
	  =&\;2\nu\cdot\curl G_{k_0}(\na\log\gamma\times\gamma(\wid E_\delta-E_{0,\delta}))+2i\om\nu\cdot\curl G_{k_0}(\gamma\mu(\wid H_\delta-H_{0,\delta})) \\
	  &\;-2k_0^2\nu\cdot G_{k_0}(\gamma(\wid E_\delta-E_{0,\delta}))+2K_{k_0}'(\nu\cdot\gamma(\wid E_\delta-E_{0,\delta}))+\varphi_{1,\delta}
	\end{align}
	and 
	\begin{align}\label{3.14}\nonumber
	  &\;\nu\times\mu(\wid H_\delta-H_{0,\delta}) \\ \nonumber
	  =&\;2\nu\times\curl G_{k_0}(\na\log\mu\times\mu(\wid H_\delta-H_{0,\delta}))-2i\om\nu\times\curl G_{k_0}(\mu\gamma(\wid E_\delta-E_{0,\delta})) \\
	  &\;-2k_0^2\nu\times G_{k_0}(\mu(\wid H_\delta-H_{0,\delta}))-2M_{k_0}(\nu\times\mu(\wid H_\delta-H_{0,\delta}))+\psi_{1,\delta}.
	\end{align}
	
	Recalling the definitions of $\varphi_{m,\delta},\psi_{m,\delta},\Phi_{m,\delta}$ and $\Psi_{m,\delta}$, by the representations \eqref{3.11}-\eqref{3.14}, we obtain that for $m\in\N\cup\{0\}$ 
	\begin{align*}
	&\;\nu\cdot\gamma(\wid E_\delta-E_{0,\delta})-\varphi_{m+1,\delta} \\ 
	=&\;2\nu\cdot\curl G_{k_0}(\na\log\gamma\times[\gamma(\wid E_\delta-E_{0,\delta})-\Phi_{m,\delta}])-2k_0^2\nu\cdot G_{k_0}(\gamma(\wid E_\delta-E_{0,\delta})-\Phi_{m-1,\delta}) \\
	&\;+2i\om\nu\cdot\curl G_{k_0}(\gamma[\mu(\wid H_\delta-H_{0,\delta})-\Psi_{m,\delta}])+2K_{k_0}'(\nu\cdot\gamma(\wid E_\delta-E_{0,\delta})-\varphi_{m,\delta})
	\end{align*}
	and 
	\begin{align*}
	&\;\nu\times\mu(\wid H_\delta-H_{0,\delta})-\psi_{m+1,\delta} \\ 
	=&\;2\nu\times\curl G_{k_0}(\na\log\mu\times[\mu(\wid H_\delta-H_{0,\delta})-\Psi_{m,\delta}])-2M_{k_0}(\nu\times\mu(\wid H_\delta-H_{0,\delta})-\psi_{m,\delta}) \\
	&\;-2i\om\nu\times\curl G_{k_0}(\mu[\gamma(\wid E_\delta-E_{0,\delta})-\Phi_{m,\delta}])-2k_0^2\nu\times G_{k_0}(\mu(\wid H_\delta-H_{0,\delta})-\Psi_{m-1,\delta})
	\end{align*}
	on $\pa\Om$, while in $\Om$ 
	\begin{align*}
	&\;\gamma(\wid E_\delta-E_{0,\delta})-\Phi_{m+1,\delta} \\
	=&\;\curl\mathcal{G}_{k_0}(\na\log\gamma\times[\gamma(\wid E_\delta-E_{0,\delta})-\Phi_{m,\delta}])+i\om\curl\mathcal{G}_{k_0}(\gamma[\mu(\wid H_\delta-H_{0,\delta})-\Psi_{m,\delta}]) \\
	&\;-k_0^2\mathcal{G}_{k_0}(\gamma(\wid E_\delta-E_{0,\delta})-\Phi_{m-1,\delta})+\na\mathcal{S}_{k_0}(\nu\cdot\gamma(\wid E_\delta-E_{0,\delta})-\varphi_{m+1,\delta})
	\end{align*}
	and 
	\begin{align*}
	&\;\mu(\wid H_\delta-H_{0,\delta})-\Psi_{m+1,\delta} \\
	=&\;\curl\mathcal{G}_{k_0}(\na\log\mu\times[\mu(\wid H_\delta-H_{0,\delta})-\Psi_{m,\delta}])-i\om\curl\mathcal{G}_{k_0}(\mu[\gamma(\wid E_\delta-E_{0,\delta})-\Phi_{m,\delta}]) \\
	&-k_0^2\mathcal{G}_{k_0}(\mu(\wid H_\delta-H_{0,\delta})-\Psi_{m-1,\delta})-\curl\mathcal{S}_{k_0}(\nu\times\mu(\wid H_\delta-H_{0,\delta})-\psi_{m+1,\delta}). 
	\end{align*}
	The above four equations clearly confirm the claim \eqref{3.3} and \eqref{3.4} by Theorem \ref{thm3.1} and the induction arguments.
	
	Now by the interior regularity of  Maxwell's equation, we see that for all $m\in\N$, $\|\wid E_\delta\|_{[H^m(D)]^3}\leq C$ uniformly for $\delta$, which implies by the trace theorem that $\|\mathcal{B}(\wid E_\delta)\|_{[H^s(\pa D)]^3}\leq C$ for $s\geq0$. Thus, from \eqref{3.2} we deduce that for all $m\in\N$, $\|\hat E_\delta\|_{[H^m(\Om\setminus\ov D)]^3}+\|\hat H_\delta\|_{[H^m(\Om\setminus\ov D)]^3}\leq C$ uniformly for $\delta$, which leads to the desired estimates and the proof is complete. 
\end{proof}
\begin{remark}\label{re3.3}
	Since $\varphi_{0,\delta}=\psi_{0,\delta}=\Phi_{0,\delta}=\Psi_{0,\delta}=0$, we obtain that for $m\in\N$, $\|\varphi_{m,\delta}\|_{H^{1/2}(\pa\Om)},\|\psi_{m,\delta}\|_{H^{1/2}_t(\pa\Om)},\|\Phi_{m,\delta}\|_{[H^1(\Om)]^3}$ and $\|\Psi_{m,\delta}\|_{[H^1(\Om)]^3}$ are uniformly bounded for $\delta$. Furthermore, by the trace theorem and interior regularity, for $m\in\N$ and $\varepsilon>0$ small we have $\|\varphi_{m,\delta}\|_{H^{m+1/2}(\pa\Om\setminus\ov{B_\varepsilon(x_0)})}+\|\psi_{m,\delta}\|_{H^{m+1/2}_t(\pa\Om\setminus\ov{B_\varepsilon(x_0)})}\leq C$ and $\|\nu\times\Phi_{m,\delta}\|_{H^{m+1/2}_t(\pa\Om)}+\|\nu\cdot\Psi_{m,\delta}+\nu\cdot(\mu-\mu_0)H_{0,\delta}\|_{H^{m+1/2}(\pa\Om)}\leq C$ uniformly for $\delta$. 
\end{remark}

The following result is analogous to Theorem \ref{thm3.2}. The detailed proof is thus omitted. 
\begin{theorem}\label{thm3.4}
	Follow the notations in Theorem {\rm \ref{thm3.2}}. Define $H_{0,\delta}':=\delta^{3/2}$ $\curl(\nu(x_0)\Phi_{k_0}(x,z_\delta))$ and $E_{0,\delta}':=-(1/i\om\gamma_0)\curl H_{0,\delta}'$ for $\delta\in(0,\delta_0)$. Let $(E_\delta',H_\delta')$ be the unique solution to problem \eqref{2.1} with the boundary data $\nu\times E_{0,\delta}'$. Set $\varphi_{m,\delta}'=\psi_{m,\delta}'=\Phi_{m,\delta}'=\Psi_{m,\delta}'=0$ with $m=-1,0$, 
	\begin{align*}
	\varphi_{1,\delta}':=&\;2i\om\nu\cdot\curl G_{k_0}(\gamma(\mu-\mu_0)H_{0,\delta}')+2\pa_\nu G_{k_0}(\nabla\gamma\cdot E_{0,\delta}')~~\qquad{\rm on}~\pa\Om, \\
	\psi_{1,\delta}':=&\;-2i\om\nu\times\curl G_{k_0}(\mu(\gamma-\gamma_0)E_{0,\delta}')+2\nu\times\nabla G_{k_0}(\nabla\mu\cdot H_{0,\delta}') \\
	&\;-2\nu\times\nabla S_{k_0}(\nu\cdot(\mu-\mu_0)H_{0,\delta}')\qquad\qquad\qquad\qquad\qquad\quad~~\;{\rm on}~\pa\Om, 
	\end{align*}
	and 
	\begin{align*}
	\Phi_{1,\delta}':=&\;i\om\curl\mathcal{G}_{k_0}(\gamma(\mu-\mu_0)H_{0,\delta}')+\nabla\mathcal{G}_{k_0}(\nabla\gamma\cdot E_{0,\delta}')\qquad\qquad\quad~~{\rm in}~\Om, \\
	\Psi_{1,\delta}':=&\;-i\om\curl\mathcal{G}_{k_0}(\mu(\gamma-\gamma_0)E_{0,\delta}')+\nabla\mathcal{G}_{k_0}(\nabla\mu\cdot H_{0,\delta}') \\
	&\;-\nabla\mathcal{S}_{k_0}(\nu\cdot(\mu-\mu_0)H_{0,\delta}')\qquad\qquad\qquad\qquad\qquad\qquad\qquad~{\rm in}~\Om. 
	\end{align*}
	For $m\in\N$, define 
	\begin{align*}
	\varphi_{m+1,\delta}':=&\;2\nu\cdot\curl G_{k_0}(\nabla\log\gamma\times\Phi_{m,\delta}')+2i\om\nu\cdot\curl G_{k_0}(\gamma\Psi_{m,\delta}') \\
	&\;-2k_0^2\nu\cdot G_{k_0}(\Phi_{m-1,\delta}')+2K_{k_0}'\varphi_{m,\delta}'+\varphi_{1,\delta}'\qquad\qquad\qquad{\rm on}~\pa\Om, \\
	\psi_{m+1,\delta}':=&\;2\nu\times\curl G_{k_0}(\na\log\mu\times\Psi_{m,\delta}')-2i\om\nu\times\curl G_{k_0}(\mu\Phi_{m,\delta}') \\
	&\;-2k_0^2\nu\times G_{k_0}(\Psi_{m-1,\delta}')-2M_{k_0}\psi_{m,\delta}'+\psi_{1,\delta}'\qquad\qquad\quad\;\;{\rm on}~\pa\Om,
	\end{align*}
	and 
	\begin{align*}
	\Phi_{m+1,\delta}':=&\;\curl\mathcal{G}_{k_0}(\nabla\log\gamma\times\Phi_{m,\delta}')+i\om\curl\mathcal{G}_{k_0}(\gamma\Psi_{m,\delta}')-k_0^2\mathcal{G}_{k_0}(\Phi_{m-1,\delta}') \\
	&\;+\na\mathcal{S}_{k_0}\varphi_{m+1,\delta}'+\Phi_{1,\delta}'\qquad\qquad\qquad\qquad\qquad\qquad\qquad\quad{\rm in}~\Om, \\
	\Psi_{m+1,\delta}':=&\;\curl\mathcal{G}_{k_0}(\na\log\mu\times\Psi_{m,\delta}')-i\om\curl\mathcal{G}_{k_0}(\mu\Phi_{m,\delta}')-k_0^2\mathcal{G}_{k_0}(\Psi_{m-1,\delta}') \\
	&\;-\curl\mathcal{S}_{k_0}\psi_{m+1,\delta}'+\Psi_{1,\delta}'\qquad\qquad\qquad\qquad\qquad\qquad\qquad{\rm in}~\Om. 
	\end{align*}
	Then the following estimates 
	\begin{align*}
	&\|\nu\cdot\gamma(E_\delta'-E_{0,\delta}')-\varphi_{m,\delta}'\|_{H^{m+1/2}(\pa\Om)}+\|\gamma(E_\delta'-E_{0,\delta}')-\Phi_{m,\delta}'\|_{[H^{m+1}(\Om\setminus\ov D)]^3}\leq C, \\
	&\|\nu\times\mu(H_\delta'-H_{0,\delta}')-\psi_{m,\delta}'\|_{H^{m+1/2}_t(\pa\Om)}+\|\mu(H_\delta'-H_{0,\delta}')-\Psi_{m,\delta}'\|_{[H^{m+1}(\Om\setminus\ov D)]^3}\leq C 
	\end{align*}
	hold for $m\in\N\cup\{0\}$, where $C>0$ is a constant independent of $\delta\in(0,\delta_0)$. 
\end{theorem}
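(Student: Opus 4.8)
The plan is to run the proof of Theorem \ref{thm3.2} line by line, with the roles of the electric and magnetic fields (and of $\mu_0$ and $\gamma_0$) interchanged, so I only indicate where the primed data enter and which estimates must be re-examined. As in the proof of Theorem \ref{thm3.2}, I would first assume without loss of generality that $\om$ is not a resonance frequency for \eqref{2.1} with $D=\emptyset$, and split $(E_\delta',H_\delta')=(\wid E_\delta',\wid H_\delta')+(\hat E_\delta',\hat H_\delta')$, where $(\wid E_\delta',\wid H_\delta')$ solves the obstacle-free problem obtained from \eqref{3.1} on replacing $E_{0,\delta}$ by $E_{0,\delta}'$, and $(\hat E_\delta',\hat H_\delta'):=(E_\delta',H_\delta')-(\wid E_\delta',\wid H_\delta')$ solves the analogue of \eqref{3.2}. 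Interior regularity for Maxwell's equations gives $\|\wid E_\delta'\|_{[H^m(D)]^3}\le C$ uniformly in $\delta$ for every $m$, hence $\|\mathcal{B}(\wid E_\delta')\|_{[H^s(\pa D)]^3}\le C$ for every $s\ge0$; then the well-posedness of the analogue of \eqref{3.2} together with interior regularity on $\Om\se\ov D$ yields uniform bounds for $\hat E_\delta'$ and $\hat H_\delta'$ in $[H^m(\Om\se\ov D)]^3$ for every $m$. It therefore suffices to establish the interior claims \eqref{3.3}--\eqref{3.4} with every unprimed quantity replaced by its primed counterpart.

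Next I would treat the base case $m=0$, verifying that $\|(\mu-\mu_0)H_{0,\delta}'\|_{[H^1(\Om)]^3}+\|(\gamma-\gamma_0)E_{0,\delta}'\|_{[H^1(\Om)]^3}\le C$ uniformly in $\delta$. This is the one place where the primed setting genuinely differs from Theorem \ref{thm3.2}: here $E_{0,\delta}'=-(i\om\gamma_0)^{-1}\delta^{3/2}\curl\curl(\nu(x_0)\Phi_{k_0}(\cdot,z_\delta))$ is one order more singular than $E_{0,\delta}$, behaving like $\delta^{3/2}|x-z_\delta|^{-3}$ near $x_0$, so it is precisely the first-order vanishing of $\gamma-\gamma_0$ at $x_0$, combined with the normalisation $\delta^{3/2}$ and the scaling $\int_\Om|x-z_\delta|^{-a}\,dx\sim\delta^{3-a}$ for $a>3$, that keeps the gradient of $(\gamma-\gamma_0)E_{0,\delta}'$ bounded in $[L^2(\Om)]^3$ uniformly in $\delta$. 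Maxwell regularity then gives $\|\wid E_\delta'-E_{0,\delta}'\|_{[H^1(\Om)]^3}+\|\wid H_\delta'-H_{0,\delta}'\|_{[H^1(\Om)]^3}\le C$, and the trace theorem furnishes the primed versions of \eqref{3.3}--\eqref{3.4} for $m=0$.

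For the inductive step I would reproduce \eqref{3.5}--\eqref{3.14} verbatim with primes. Since $(E_{0,\delta}',H_{0,\delta}')$ solves the constant-coefficient Maxwell system with parameters $(\mu_0,\gamma_0)$, the residual $(\wid E_\delta'-E_{0,\delta}',\wid H_\delta'-H_{0,\delta}')$ satisfies the analogue of \eqref{3.5} with source terms $i\om(\mu-\mu_0)H_{0,\delta}'$ and $-i\om(\gamma-\gamma_0)E_{0,\delta}'$; the Green representation \cite[Theorem 3.25]{AF15} applies to $\gamma(\wid E_\delta'-E_{0,\delta}')$ and $\mu(\wid H_\delta'-H_{0,\delta}')$; the curl substitutions \eqref{3.6}--\eqref{3.7}, the divergence identities \eqref{3.8}--\eqref{3.9}, and --- using that $\nu\times(\wid E_\delta'-E_{0,\delta}')=0$ on $\pa\Om$ forces $\nu\cdot\curl(\wid E_\delta'-E_{0,\delta}')=0$ and hence, through the first equation of the analogue of \eqref{3.5}, fixes $\nu\cdot\mu(\wid H_\delta'-H_{0,\delta}')$ in terms of $\nu\cdot(\mu-\mu_0)H_{0,\delta}'$ on $\pa\Om$ --- the analogue of \eqref{3.10}, all carry over without change. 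This produces exactly the recursion from the proof of Theorem \ref{thm3.2}, relating $\nu\cdot\gamma(\wid E_\delta'-E_{0,\delta}')-\varphi_{m+1,\delta}'$, $\nu\times\mu(\wid H_\delta'-H_{0,\delta}')-\psi_{m+1,\delta}'$, $\gamma(\wid E_\delta'-E_{0,\delta}')-\Phi_{m+1,\delta}'$ and $\mu(\wid H_\delta'-H_{0,\delta}')-\Psi_{m+1,\delta}'$ to the corresponding level-$m$ and level-$(m-1)$ differences through the same operators $G_{k_0}$, $K_{k_0}'$, $M_{k_0}$, $\mathcal{G}_{k_0}$, $\mathcal{S}_{k_0}$; the mapping properties of Theorem \ref{thm3.1} together with the induction hypothesis then close the step with constants uniform in $\delta$.

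The step I expect to be the main obstacle is the base case: one must confirm that the extra order of singularity carried by the primed data is still entirely absorbed by the first-order vanishing of the contrasts $\mu-\mu_0$ and $\gamma-\gamma_0$ at $x_0$ together with the $\delta^{3/2}$ scaling, so that each source term appearing in the analogue of \eqref{3.5} stays bounded in $[H^1(\Om)]^3$ uniformly in $\delta$. Once that scaling bookkeeping is in place, every subsequent step is formally identical to the corresponding one in the proof of Theorem \ref{thm3.2}, which is why its detailed verification is omitted.
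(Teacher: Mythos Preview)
Your proposal is correct and follows exactly the paper's approach: the paper simply states that Theorem \ref{thm3.4} is analogous to Theorem \ref{thm3.2} and omits the proof. Your concern about the base case is slightly overstated, however: since $H_{0,\delta}'$ has the same singularity profile as $E_{0,\delta}$, and $E_{0,\delta}'$ the same as $H_{0,\delta}$, the bound $\|(\gamma-\gamma_0)E_{0,\delta}'\|_{[H^1(\Om)]^3}\le C$ is not a new computation but literally the same one already carried out for $\|(\mu-\mu_0)H_{0,\delta}\|_{[H^1(\Om)]^3}$ in the proof of Theorem \ref{thm3.2}, with the roles of $(\mu,\mu_0,H)$ and $(\gamma,\gamma_0,E)$ interchanged---so no additional scaling bookkeeping is required.
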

\begin{remark}\label{re3.5}
	Similarly, for $m\in\N$ we have that $\|\varphi_{m,\delta}'\|_{H^{1/2}(\pa\Om)}$, $\|\psi_{m,\delta}'\|_{H^{1/2}_t(\pa\Om)}$, $\|\Phi_{m,\delta}'\|_{[H^1(\Om)]^3}$ and $\|\Psi_{m,\delta}'\|_{[H^1(\Om)]^3}$ are uniformly bounded for $\delta$. Further, for $m\in\N$ and $\varepsilon>0$ small we also have $\|\varphi_{m,\delta}'\|_{H^{m+1/2}(\pa\Om\setminus\ov{B_\varepsilon(x_0)})}+\|\psi_{m,\delta}'\|_{H^{m+1/2}_t(\pa\Om\setminus\ov{B_\varepsilon(x_0)})}\leq C$ and $\|\nu\times\Phi_{m,\delta}'\|_{H^{m+1/2}_t(\pa\Om)}+\|\nu\cdot\Psi_{m,\delta}'+\nu\cdot(\mu-\mu_0)H_{0,\delta}'\|_{H^{m+1/2}(\pa\Om)}\leq C$ uniformly for $\delta$. 
\end{remark}

\subsection{The exploding norms}\label{sec3.2}
In this subsection, we show that the Sobolev norms at the boundary of $\na G_{k}(\na q_0\cdot H_{0,\delta})$ would explode for certain $q_0$. In particular, we greatly simplify the method in \cite{CJ24} on analyzing the following singular integral. 
\begin{lemma}\label{lem3.6}
	Follow the notations $x_0,\delta_0$ and $z_\delta$ in Theorem {\rm \ref{thm3.2}}. Suppose $x_0=(0,0,0)$ and $\nu(x_0)=(1,0,0)$. For $m\in\N$ and $\delta\in(0,\delta_0)$, define 
	\ben
	I_{2m-1}(\delta):=\delta^{3/2}\int_{\Om}\frac{1}{|y|}\left(\pa_1^{2m+2}\frac{1}{|y-z_\delta|}\right)y_1^{2m-1}dy, \\
	I_{2m}(\delta):=\delta^{3/2}\int_{\Om}\frac{1}{|y|}\pa_1\left[\left(\pa_1^{2m+2}\frac{1}{|y-z_\delta|}\right)y_1^{2m}\right]dy. 
	\enn
	Then for arbitrarily fixed $m\in\N$, $|I_m(\delta)|\rightarrow+\infty$ as $\delta\rightarrow0^+$. 
\end{lemma}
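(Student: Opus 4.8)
The plan is to scale $\delta$ out of the integral near $x_0$, pass to an explicit half-space model integral by dominated convergence, and evaluate that model integral via a tangential Fourier transform.

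First I would substitute $y=\delta w$. With $x_0=0$, $\nu(x_0)=e_1:=(1,0,0)$ and $z_\delta=\delta e_1$, one has $\pa_{y_1}=\delta^{-1}\pa_{w_1}$, $|y|=\delta|w|$, $|y-z_\delta|=\delta|w-e_1|$, $dy=\delta^3dw$, so every factor becomes a power of $\delta$ times a $\delta$-free expression; the powers from $1/|w|$, $\pa_{w_1}^{2m+2}|w-e_1|^{-1}$, $w_1^{2m-1}$ and $dw$ combine with the prefactor $\delta^{3/2}$ to give an overall $\delta^{-1/2}$, yielding
\ben
I_{2m-1}(\delta)=\delta^{-1/2}\int_{\Om_\delta}\frac{w_1^{2m-1}}{|w|}\Big(\pa_{w_1}^{2m+2}\frac{1}{|w-e_1|}\Big)dw,\qquad \Om_\delta:=\delta^{-1}\Om,
\enn
and likewise $I_{2m}(\delta)=\delta^{-1/2}\int_{\Om_\delta}\frac{1}{|w|}\pa_{w_1}\big[(\pa_{w_1}^{2m+2}\frac{1}{|w-e_1|})w_1^{2m}\big]dw$. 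Since $\pa\Om$ is smooth with tangent plane $\{w_1=0\}$ and inner normal $-e_1$ at $x_0$, the indicator of $\Om_\delta$ converges a.e.\ to that of the half-space $H:=\{w_1<0\}$. Shrinking $\delta_0$ so that $\mathrm{dist}(z_\delta,\ov\Om)=\delta$, we get $\mathrm{dist}(e_1,\Om_\delta)=1$, hence $|w-e_1|\ge1$ on $\Om_\delta$; together with $\bigl|\pa_{w_1}^{j}(1/|w-e_1|)\bigr|\le C_j|w-e_1|^{-1-j}$ and $|w_1|\le|w|$, the integrands above are bounded on $\Om_\delta$, uniformly for small $\delta$, by a fixed $L^1(\R^3)$ function (bounded near $w=0$, rapidly decaying at infinity). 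Dominated convergence then gives, for each fixed $m\in\N$, $\delta^{1/2}I_m(\delta)\to J_m$, the corresponding integrand taken over $H$; it thus suffices to prove $J_m\neq0$.

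To evaluate $J_m$ I would freeze $w_1<0$ and Fourier transform in $w'=(w_2,w_3)$, using the classical formula $\widehat{(a^2+|\cdot|^2)^{-1/2}}(\eta)=2\pi|\eta|^{-1}e^{-a|\eta|}$ $(a>0)$ for $1/|w|$ (with $a=-w_1$) and for $\pa_{w_1}^N(1/|w-e_1|)$ (with $a=1-w_1$, each $\pa_{w_1}$ pulling out a factor $|\eta|$). By Parseval, $\int_{\R^2}\frac{1}{|w|}\pa_{w_1}^N\frac{1}{|w-e_1|}\,dw'=2\pi\int_0^\infty\rho^{N-1}e^{-(1-2w_1)\rho}d\rho=2\pi(N-1)!\,(1-2w_1)^{-N}$; integrating this against $w_1^{2m-1}$ over $(-\infty,0)$ with $N=2m+2$ reduces $J_{2m-1}$ to a Beta integral, while for $J_{2m}$ the product rule gives $\int_H\frac{w_1^{2m}}{|w|}\pa_{w_1}^{2m+3}\frac{1}{|w-e_1|}dw+2mJ_{2m-1}$, whose first term is treated the same way with $N=2m+3$. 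The elementary evaluations yield
\ben
J_{2m-1}=-\frac{2\pi(2m-1)!}{4^{m}},\qquad J_{2m}=-\frac{\pi(2m)!}{4^{m}},
\enn
both nonzero, so $|I_m(\delta)|=|J_m|\,\delta^{-1/2}(1+o(1))\to+\infty$ as $\delta\to0^+$, which is the assertion.

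The main obstacle is the dominated-convergence step: the two singular points $w=0$ (on $\pa H$) and $w=e_1$ (just outside $\ov H$) both live at unit scale and are absorbed into $\pa\Om_\delta$ as $\delta\to0$, so a $\delta$-uniform integrable majorant is needed, and this is precisely what the exact identity $\mathrm{dist}(e_1,\Om_\delta)\equiv1$ supplies. A secondary point deserving care is that the two positive Beta integrals occurring in $J_{2m}$ do not cancel, which the explicit closed form confirms. (Alternatively one can avoid passing to the limit and run the same tangential-Fourier computation directly on $I_m(\delta)$ after localizing to $\Om\cap B_\rho(x_0)$, obtaining a lower bound of order $\delta^{-1/2}$ directly.)
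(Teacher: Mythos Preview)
Your argument is correct and genuinely different from the paper's. The paper first replaces $\Om$ by a concrete cylinder with flat boundary near $x_0$, introduces auxiliary integrals
\[
\wid I_m(\delta)=\int_{\Om}\frac{1}{|y|}\Big(\pa_1\frac{1}{|y-z_\delta|}\Big)y_1^{m}\,dy,
\]
evaluates $\wid I_m(\delta)=-\frac{\pi}{2^m}\delta^m\ln\delta+f_m(\delta)$ with $f_m$ smooth by elementary one–dimensional integration, and then exploits the identity $\pa_{y_1}=-\pa_\delta$ on $|y-z_\delta|^{-1}$ to recover $I_m(\delta)$ as $\pm\delta^{3/2}\frac{d^{m+2}}{d\delta^{m+2}}\wid I_m(\delta)$, from which the $\delta^{-1/2}$ blow-up and the exact leading constants follow. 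Your route instead rescales $y=\delta w$ to make the power $\delta^{-1/2}$ appear at once, passes to the half-space limit by dominated convergence (the key uniform bound $|w-e_1|\ge1$ on $\Om_\delta$ coming from $\mathrm{dist}(z_\delta,\ov\Om)=\delta$), and evaluates the limiting integral by a tangential Fourier transform and a Beta integral. Both arguments yield the same constants $J_{2m-1}=-\pi(2m-1)!/2^{2m-1}$ and $J_{2m}=-\pi(2m)!/2^{2m}$, which is a good cross-check. Your approach avoids the boundary-flattening reduction and the auxiliary $\wid I_m$, and makes the scaling structure transparent; the paper's approach is more elementary (no Fourier analysis, no DCT on an expanding domain) once the $\pa_{y_1}\leftrightarrow-\pa_\delta$ trick and the explicit cylinder are in hand. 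One small point worth stating explicitly in your write-up is the $L^1$ majorant: on $\{|w-e_1|\ge1\}$ one has $|w|^{2m-2}|w-e_1|^{-2m-3}\le C$ for $|w|\le2$ and $\le C|w|^{-5}$ for $|w|>2$, so the dominated convergence is indeed justified.
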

\begin{proof}
	Since $\pa\Om\in C^\infty$, without loss of genearlity we can assume that $\pa\Om$ is flat near $x_0$. In particular, we suppose that $\Om=\{y\in\R^3,y_2^2+y_3^2<1,-1<y_1<0\}$. 
	
	To prove the lemma, we first define some auxiliary functions for $\delta$. For $m\in\N$ and $\delta\in(0,\delta_0)$, define 
	\ben
	  \wid I_m(\delta):=\int_{\Om}\frac{1}{|y|}\left(\pa_1\frac{1}{|y-z_\delta|}\right)y_1^{m}dy. 
	\enn
	By the explicit expression of $\Om$, we deduce that 
	\begin{align*}
	  \wid I_m(\delta)=&\;-\int_{\Om}\frac{1}{|y|}\frac{y_1-\delta}{|y-z_\delta|^3}y_1^{m}dy \\
	  =&\;(-1)^m\pi\int_{0}^{1}\int_{0}^{1}\frac{1}{\sqrt{y_1^2+r}}\frac{y_1+\delta}{\sqrt{(y_1+\delta)^2+r}^3}y_1^mdrdy_1 \\
	  =&\;(-1)^m2\pi\int_{0}^{1}(y_1+\delta)y_1^m\left(\int_{y_1}^{\sqrt{y_1^2+1}}\frac{1}{\sqrt{s^2+(y_1+\delta)^2-y_1^2}^3}ds\right)dy_1. 
	\end{align*}
	It is known that 
	\ben
	  \int_{y_1}^{\sqrt{y_1^2+1}}\frac{1}{\sqrt{s^2+(y_1+\delta)^2-y_1^2}^3}ds=\frac{1}{\delta(2y_1+\delta)}\left(\frac{\sqrt{y_1^2+1}}{\sqrt{(y_1+\delta)^2+1}}-1+\frac{\delta}{y_1+\delta}\right), 
	\enn
	which implies 
	\ben
	  \wid I_m(\delta)=(-1)^m2\pi\int_{0}^{1}\left(\frac{y_1^m}{2y_1+\delta}+\frac{-(y_1+\delta)y_1^m}{\sqrt{(y_1+\delta)^2+1}(\sqrt{y_1^2+1}+\sqrt{(y_1+\delta)^2+1})}\right)dy_1. 
	\enn
	Direct calculation shows that 
	\begin{align*}
	  (-1)^m2\pi\int_{0}^{1}\frac{y_1^m}{2y_1+\delta}dy_1=&\;(-1)^m\pi\int_{0}^{1}\sum_{j=0}^{m}C_m^j(y_1+\frac{\delta}{2})^{j-1}(-\frac{\delta}{2})^{m-j}dy_1 \\
	  =&\;-\frac{\pi}{2^m}\delta^m\ln\delta+\left[(-\frac{\delta}{2})^m\ln(2\delta+1)\right. \\
	  &\;\left.+\sum_{j=1}^{m}(-\frac{\delta}{2})^{m-j}\frac{C_m^j}{j}\left((1+\frac{\delta}{2})^j-(\frac{\delta}{2})^j\right)\right]. 
	\end{align*}
	Therefore, we derive that for $m\in\N$ and $\delta\in(0,\delta_0)$ 
	\ben
	  \wid I_m(\delta)=-\frac{\pi}{2^m}\delta^m\ln\delta+f_m(\delta), 
	\enn
	where $f_m$ is a smooth function in $[0,\delta_0]$. 
	
	Now we investigate on $I_m(\delta)$. It is seen that for $m\in\N$ and $\delta\in(0,\delta_0)$ 
	\begin{align*}
	  \int_{\Om}\frac{1}{|y|}\left(\pa_1^{m+3}\frac{1}{|y-z_\delta|}\right)y_1^{m}dy=&\;(-1)^m\int_{\Om}\frac{1}{|y|}\left(\pa_\delta^{m+2}\pa_1\frac{1}{|y-z_\delta|}\right)y_1^{m}dy \\
	  =&\;(-1)^m\frac{d^{m+2}}{d\delta^{m+2}}\wid I_m(\delta) \\
	  =&\;(-1)^m\pi\frac{m!}{2^m}\delta^{-2}+\frac{d^{m+2}}{d\delta^{m+2}}f_m(\delta), 
	\end{align*}
	which implies that 
	\ben
	  &&I_{2m-1}(\delta)=-\pi\frac{(2m-1)!}{2^{2m-1}}\delta^{-1/2}+\delta^{3/2}\frac{d^{2m+1}}{d\delta^{2m+1}}f_{2m-1}(\delta), \\
	  &&I_{2m}(\delta)=-\pi\frac{(2m)!}{2^{2m}}\delta^{-1/2}+\delta^{3/2}\left(2m\frac{d^{2m+1}}{d\delta^{2m+1}}f_{2m-1}(\delta)+\frac{d^{2m+2}}{d\delta^{2m+2}}f_{2m}(\delta)\right) 
	\enn
	for $m\in\N$ and $\delta\in(0,\delta_0)$. Since $f_m$ is smooth in $[0,\delta_0]$, we obtain the conclusion. 
\end{proof}
\begin{theorem}\label{thm3.7}
	Follow the notations $x_0,\delta_0$ and $z_\delta$ in Theorem {\rm \ref{thm3.2}}. For fixed $m\in\N\cup\{0\}$, let $q\in C^\infty(\ov\Om)$ be such that $D^\alpha q=0$ on $\Gamma_\varepsilon:=\pa\Om\cap B_\varepsilon(x_0)$ with $|\alpha|\leq m$ and $\varepsilon>0$ small, and $\pa_\nu^{m+1}q(x_0)\neq0$, then we have 
	\ben
	\left\|\nu\times\na G_k(\na q\cdot H_{0,\delta})\right\|_{H_t^{m+3/2}(\pa\Om)}+\left\|\nu\cdot\na G_k(\na q\cdot H_{0,\delta})\right\|_{H^{m+3/2}(\pa\Om)}\rightarrow+\infty
	\enn
	as $\delta\rightarrow0^+$. 
\end{theorem}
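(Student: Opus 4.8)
My plan is to trace the asserted blow-up back to the singular integrals $I_m(\de)$ of Lemma~\ref{lem3.6}. Since the claim is local at $x_0$ and the behaviour as $\de\to0^+$ is governed by the singularity of $H_{0,\de}$, which concentrates at $x_0$, I may work in the flat, rotationally symmetric model geometry of Lemma~\ref{lem3.6} (so $x_0=0$, $\nu(x_0)=(1,0,0)$, $\Om$ a cylinder about the $y_1$-axis), the boundary curvature contributing only lower-order terms in the $\de$-expansion that will be absorbed into bounded remainders. From $\curl\curl=\na\dive-\De$ and $\De\Phi_{k_0}(\cdot,z_\de)=-k_0^2\Phi_{k_0}(\cdot,z_\de)$ in $\Om$ one obtains the explicit formula $H_{0,\de}=\frac{\de^{3/2}}{i\om\mu_0}\big(\na(\pa_1\Phi_{k_0}(\cdot,z_\de))+k_0^2\,\nu(x_0)\,\Phi_{k_0}(\cdot,z_\de)\big)$, hence $\na q\cdot H_{0,\de}=\frac{\de^{3/2}}{i\om\mu_0}\big(\na q\cdot\na\pa_1\Phi_{k_0}(\cdot,z_\de)+k_0^2(\pa_1q)\Phi_{k_0}(\cdot,z_\de)\big)$. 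Writing $q(y)=y_1^{m+1}g(y)$ near $x_0$ (Hadamard's lemma, since $D^\al q=0$ on $\Gamma_\varepsilon$ for $|\al|\le m$) with $g$ smooth and $g(x_0)=\pa_\nu^{m+1}q(x_0)/(m+1)!\ne0$, the ``principal'' part is $\varphi_{0,\de}:=\frac{(m+1)g(x_0)\de^{3/2}}{i\om\mu_0}\,\chi(y)\,y_1^m\,\pa_1^2\Phi_0(y,z_\de)$, where $\Phi_0(x,y)=(4\pi|x-y|)^{-1}$ and $\chi\in C_c^\infty(B_\varepsilon(x_0))$ equals $1$ near $x_0$.

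The first step is to show that $\na q\cdot H_{0,\de}-\varphi_{0,\de}$ contributes only a uniformly bounded amount to both norms. This difference is a sum of terms of the following types: parts supported where $\chi\ne1$ (on which $\Phi_{k_0}(\cdot,z_\de)$ and all its derivatives are bounded uniformly in $\de$); the $\Phi_{k_0}-\Phi_0$ contribution, whose order-$j$ $y$-derivatives are $O(|y-z_\de|^{1-j})$, hence less singular by two orders than those of $\Phi_0$; the $k_0^2(\pa_1q)\Phi_{k_0}$ term; and the summands of $\na q\cdot\na\pa_1\Phi_0(\cdot,z_\de)$ other than $(\pa_1q)\pa_1^2\Phi_0(\cdot,z_\de)$ — those built from $\pa_2q,\pa_3q$ or from the $y_1^{m+1}\na g$ part of $\pa_1q$ — each carrying one extra power of $y_1$. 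Combining Theorem~\ref{thm3.1} with the proof of Lemma~\ref{lem3.6}, which yields $\wi I_n(\de)=-\pi2^{-n}\de^n\ln\de+f_n(\de)$ with $f_n$ smooth and thereby controls the whole family $\de^{3/2}\int_\Om|y|^{-1}(\pa_1^p|y-z_\de|^{-1})\,y_1^\ell\,dy$, one checks that every such modification — lowering the number of $\pa_1$'s on $|y-z_\de|^{-1}$ by one, or raising the power of $y_1$ by one — turns an $O(\de^{-1/2})$ quantity into an $O(\de^{1/2})$ one. It therefore suffices to prove $\|\nu\ti\na\mathcal{G}_{k}(\varphi_{0,\de})\|_{H^{m+3/2}_t(\pa\Om)}+\|\nu\cdot\na\mathcal{G}_{k}(\varphi_{0,\de})\|_{H^{m+3/2}(\pa\Om)}\to+\infty$.

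The second step detects this blow-up. Because $\pa\Om$ is two-dimensional, $H^{m+3/2}(\pa\Om)\hookrightarrow C^m(\pa\Om)$, so it is enough to exhibit a component of $\na\mathcal{G}_{k}(\varphi_{0,\de})|_{\pa\Om}$ whose $m$-th tangential derivative at $x_0$ is unbounded (the normal component being controlled by the first norm, a tangential one by the second). For $m\ge1$ even I use $\pa_2^m$ of the normal component, $\pa_2^m\pa_1\mathcal{G}_{k}(\varphi_{0,\de})(x_0)=\int_\Om\pa_{x_2}^m\pa_{x_1}\Phi_k(x_0,y)\,\varphi_{0,\de}(y)\,dy$ (differentiation under the integral is legitimate since $z_\de\notin\ov\Om$); for $m\ge1$ odd I use $\pa_2^m$ of the tangential component $\pa_{x_2}\mathcal{G}_{k}(\varphi_{0,\de})$, i.e.\ $\pa_2^{m+1}\mathcal{G}_{k}(\varphi_{0,\de})(x_0)$. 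In either case I integrate by parts to move all $x$-derivatives off $\Phi_k(x_0,\cdot)$: the tangential integrations produce no boundary term near $x_0$, and the single normal integration (present only in the even case) produces a boundary term that vanishes on $\Gamma_\varepsilon$ because $y_1^m=0$ there, the remaining boundary terms being bounded; this lands the derivatives on $y_1^m\pa_1^2\Phi_0(\cdot,z_\de)$. Replacing $\Phi_k$ by $\Phi_0$ (the difference being less singular, as above), using that the domain and the weight $\Phi_0(\cdot,x_0)\,y_1^\ell$ are rotationally symmetric about the $y_1$-axis to pass to the rotational average of the integrand, and invoking the classical identity that, for a function harmonic in $\Om$ and axisymmetric about the $y_1$-axis, the rotational average of an even tangential derivative $\pa_2^{2k}$ equals a nonzero multiple of the normal derivative $\pa_1^{2k}$, one finds after expanding the Leibniz rule for the $\pa_1$'s that the chosen derivative at $x_0$ equals a nonzero constant times $I_m(\de)$ plus a bounded remainder. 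Lemma~\ref{lem3.6} then gives $|I_m(\de)|\to+\infty$, so the Sobolev norm explodes. The case $m=0$ is handled directly: here one does not integrate by parts in the normal variable, and the blow-up comes from differentiating twice in $\de$ a logarithmic integral of the form $\int_\Om\pa_1\Phi_0(\cdot,x_0)\,\Phi_0(\cdot,z_\de)\,dy\sim-\tfrac1{16\pi}\ln\de$ (reducible, via Lemma~\ref{lem3.6}, to $\wi I_0(\de)$ together with an elementary boundary integral).

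The principal obstacle is the second step: converting the mixed tangential--normal derivatives that the $C^m(\pa\Om)$ embedding naturally provides on the $2$-dimensional boundary into the pure normal derivatives appearing in $I_m(\de)$. This requires (i) justifying the rotational-averaging reduction on the flattened model, (ii) a careful account of the boundary terms generated by the integrations by parts and a check that they remain uniformly bounded, and (iii) tracking the Leibniz expansion so that the various $O(\de^{-1/2})$ pieces recombine into precisely $I_m(\de)$ rather than cancelling — a bookkeeping that rests on the explicit form and sign in Lemma~\ref{lem3.6}. Reducing the first step to the flat, rotationally symmetric model, while routine, likewise needs to be done with care that the boundary curvature enters only at lower order in $\de$.
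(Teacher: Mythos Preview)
Your proposal is correct in outline and lands on the same target as the paper---Lemma~\ref{lem3.6}---via the same two reductions (replace $\Phi_{k_0}$ by $\Phi_0$; replace $\na q\cdot H_{0,\de}$ by its leading piece $c\,y_1^m\pa_1^2\Phi_0(\cdot,z_\de)$). The mechanism for trading tangential for normal derivatives, however, differs. The paper argues by contradiction: assuming the $H^{m+3/2}$ norms are bounded along a sequence, it takes a tangential gradient (losing one Sobolev index), integrates by parts, and then \emph{sums} the $(j_1,j_2)=(2,2)$ and $(3,3)$ contributions to invoke the pointwise identity $\pa_2^2+\pa_3^2=-\pa_1^2$ on the harmonic $\Phi_0(\cdot,z_\de)$; iterating this step lowers the Sobolev index to $3/2$ while raising the normal order to $m+3$, whereupon $H^{3/2}(\Gamma_\varepsilon)\hookrightarrow C^0$ and Taylor expansion of $\pa_1 q$ give exactly the integral $I_m(\de)$. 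You instead use a single embedding $H^{m+3/2}(\pa\Om)\hookrightarrow C^m(\pa\Om)$ and your rotational-averaging identity for axisymmetric harmonic functions to convert $\pa_2^{2k}$ to a multiple of $\pa_1^{2k}$ in one stroke. Both routes are valid; the paper's buys you less bookkeeping (no need to track the nonzero constant in the averaging, no reliance on an exactly axisymmetric model domain and cutoff, and no Leibniz recombination of several $O(\de^{-1/2})$ pieces), while yours is conceptually more direct once the averaging lemma is in hand.

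The $m=0$ case is also handled differently: the paper gives a short, self-contained contradiction via biharmonic regularity---if the boundary norms were bounded, then $G_0(\de^{3/2}\pa_1^2\Phi_0(\cdot,z_\de))$ would be bounded in $H^3(\Om)$ (it is biharmonic with controlled Cauchy data), forcing $\de^{3/2}\pa_1^2\Phi_0(\cdot,z_\de)$ bounded in $H^1(\Om)$, which is false. This avoids the explicit logarithmic computation you sketch and is worth knowing as an alternative.
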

\begin{proof}
	Without loss of genearlity, we suppose that $x_0=(0,0,0)$ and $\nu(x_0)=(1,0,0)$. From the definition of $H_{0,\delta}$, it is noted that $H_{0,\delta}=\delta^{3/2}(k_0^2\Phi_{k_0}(\cdot,z_\delta)+\pa_1^2\Phi_{k_0}(\cdot,z_\delta),\pa_{12}\Phi_{k_0}(\cdot,z_\delta),\pa_{13}\Phi_{k_0}(\cdot,z_\delta))$. By the assumption on $q$, we know that $D^\alpha q(x_0)=0$ for $\alpha=(\alpha_1,\alpha_2,\alpha_3)$ with $\alpha_1\leq m$, which implies that $\delta^{3/2}\pa_1q\Phi_{k_0}(\cdot,z_\delta)$, $\delta^{3/2}\pa_2q\pa_{12}\Phi_{k_0}(\cdot,z_\delta)$ and $\delta^{3/2}\pa_3q\pa_{13}\Phi_{k_0}(\cdot,z_\delta)$ are uniformly bounded in $H^{m+1}(\Om)$ for $\delta\in(0,\delta_0)$. Therefore, it suffices to show that 
	\ben
	&&\left\|\nu\times\na G_k(\delta^{3/2}\pa_1q\pa_1^2\Phi_{k_0}(\cdot,z_\delta))\right\|_{H_t^{m+3/2}(\pa\Om)} \\
	&&\qquad+\left\|\nu\cdot\na G_k(\delta^{3/2}\pa_1q\pa_1^2\Phi_{k_0}(\cdot,z_\delta))\right\|_{H^{m+3/2}(\pa\Om)}\rightarrow+\infty
	\enn
	as $\delta\rightarrow0^+$. Further, from the regularity of $\Phi_{k}-\Phi_0$, it is enough to prove that 
	\be\nonumber
	&&\left\|\nu\times\na G_0(\delta^{3/2}\pa_1q\pa_1^2\Phi_{0}(\cdot,z_\delta))\right\|_{H_t^{m+3/2}(\pa\Om)} \\ \label{3.15}
	&&\qquad+\left\|\nu\cdot\na G_0(\delta^{3/2}\pa_1q\pa_1^2\Phi_{0}(\cdot,z_\delta))\right\|_{H^{m+3/2}(\pa\Om)}\rightarrow+\infty
	\en
	as $\delta\rightarrow0^+$. 
	
	Now consider the case that $m=0$. We prove by contradiction. Assume there exists a sequence $\{\delta_l\}_{l\in\N}\subset(0,\delta_0)$ with $\delta_l\rightarrow0$ as $l\rightarrow+\infty$ such that the norms in \eqref{3.15} are bounded for $\{\delta_l\}$. It is known that $\delta^{3/2}(\pa_1q-\pa_1q(x_0))\pa_1^2\Phi_{0}(\cdot,z_\delta)$ are uniformly bounded in $H^1(\Om)$ for $\delta\in(0,\delta_0)$, which implies by Theorem \ref{thm3.1} and the trace theorem that $\na G_0[\delta^{3/2}(\pa_1q-\pa_1q(x_0))\pa_1^2\Phi_{0}(\cdot,z_\delta)]$ are uniformly bounded in $[H^{3/2}(\pa\Om)]^3$ for $\delta$. Since $\pa_1q(x_0)\neq0$, we deduce that 
	\be\label{3.16}
	  \left\|\na G_0(\delta_l^{3/2}\pa_1^2\Phi_{0}(\cdot,z_{\delta_l}))\right\|_{[H^{3/2}(\pa\Om)]^3}\leq C
	\en
    uniformly for $l\in\N$. From the boundedness of $\delta_l^{3/2}\pa_1^2\Phi_{0}(\cdot,z_{\delta_l})$ in $L^2(\Om)$, it is obtained that $G_0(\delta_l^{3/2}\pa_1^2\Phi_{0}(\cdot,z_{\delta_l}))$ is uniformly bounded in $H^{3/2}(\pa\Om)$ and hence in $H^{5/2}(\pa\Om)$ by \eqref{3.16}. Further, we note that $\Delta^2G_0(\delta_l^{3/2}\pa_1^2\Phi_{0}(\cdot,z_{\delta_l}))=0$ in $\Om$. Then by the regularity of biharmonic equation (see details in \cite{FHG10}) we derive that $G_0(\delta_l^{3/2}\pa_1^2\Phi_{0}(\cdot,z_{\delta_l}))$ are uniformly bounded in $H^3(\Om)$, which indicates that $\delta_l^{3/2}\pa_1^2\Phi_{0}(\cdot,z_{\delta_l})$ are uniformly bounded in $H^1(\Om)$ for $l\in\N$. This is however a contradiction and thus the conclusion follows. 
    
    We finish the proof by applying Lemma \ref{lem3.6}. Suppose the oppisite that the norms are bounded for $\{\delta_l\}$. Clearly, the norms are also bounded when restricted on $\Gamma_\varepsilon$, i.e., 
    \ben
   &&\left\|\nu\times\na G_0(\delta_l^{3/2}\pa_1q\pa_1^2\Phi_{0}(\cdot,z_{\delta_l}))\right\|_{H_t^{m+3/2}(\Gamma_\varepsilon)} \\ 
   &&\qquad+\left\|\nu\cdot\na G_0(\delta_l^{3/2}\pa_1q\pa_1^2\Phi_{0}(\cdot,z_{\delta_l}))\right\|_{H^{m+3/2}(\Gamma_\varepsilon)}\leq C. 
    \enn
    Since $\pa\Om\in C^\infty$, for $\varepsilon>0$ sufficiently small, we can smoothly straighten the boundary $\Gamma_\varepsilon$ such that the norms after straightening are equivalent (see \cite[Theorem 3.23]{WM00}). Therefore, there is no loss of genearlity if we assume that $\Gamma_\varepsilon\subset\{y\in\R^3,y_1=0\}$. In other words, we have $\nu=(1,0,0)$ on $\Gamma_\varepsilon$. 
    
    We first consider the case that $m=2i_0-1$ with $i_0\in\N$. From the condition that $\pa_1q=0$ on $\Gamma_\varepsilon$, integration by parts yields that 
    \begin{align*}
    \pa_j\int_{\Om}\Phi_{0}(x,y)\pa_1q(y)\pa_1^2\Phi_{0}(y,z_{\delta_l})dy=&\;\int_{\Om}\Phi_{0}(x,y)\pa_j\left(\pa_1q(y)\pa_1^2\Phi_{0}(y,z_{\delta_l})\right)dy \\
    &\;-\int_{\pa\Om\setminus\ov\Gamma_\varepsilon}\Phi_{0}(x,y)\pa_1q(y)\pa_1^2\Phi_{0}(y,z_{\delta_l})\nu_j(y)ds(y)
    \end{align*}
    for $x\in\ov\Om$ and $j=1,2,3$. Evidently, we have 
    \ben
    \left\|\int_{\pa\Om\setminus\ov\Gamma_\varepsilon}\Phi_{0}(x,y)\delta_l^{3/2}\pa_1q(y)\pa_1^2\Phi_{0}(y,z_{\delta_l})\nu_j(y)ds(y)\right\|_{H^{m+3/2}(\Gamma_\varepsilon)}\leq C 
    \enn
    uniformly for $l\in\N$. Since $D^\alpha q(x_0)=0$ for $\alpha=(\alpha_1,\alpha_2,\alpha_3)$ with $\alpha_1\leq m$, for $j=2,3$ we deduce that $\delta_l^{3/2}\pa_j\pa_1q(y)\pa_1^2\Phi_{0}(y,z_{\delta_l})$ are uniformly bounded in $H^m(\Om)$. Thus, we derive that 
   \ben
     \left\|\int_{\Om}\Phi_{0}(x,y)\delta_l^{3/2}\pa_1q(y)\pa_{j_1}\pa_1^2\Phi_{0}(y,z_{\delta_l})dy\right\|_{H^{m+3/2}(\Gamma_\varepsilon)}\leq C 
   \enn
   for $j_1=2,3$ and $l\in\N$, which leads to 
   \ben
      \left\|\nu\times\na\int_{\Om}\Phi_{0}(x,y)\delta_l^{3/2}\pa_1q(y)\pa_{j_1}\pa_1^2\Phi_{0}(y,z_{\delta_l})dy\right\|_{H^{m+1/2}_t(\Gamma_\varepsilon)}\leq C. 
   \enn
   Analyzing similarly, we obtain that 
    \ben
   \left\|\int_{\Om}\Phi_{0}(x,y)\delta_l^{3/2}\pa_1q(y)\pa_{j_1}\pa_{j_2}\pa_1^2\Phi_{0}(y,z_{\delta_l})dy\right\|_{H^{m+1/2}(\Gamma_\varepsilon)}\leq C 
   \enn
   for $j_1,j_2=2,3$. By taking $(j_1,j_2)=(2,2),(3,3)$, since $\pa_1^2\Phi_{0}(\cdot,z_{\delta_l})$ is harmonic in $\Om$, it further yields that 
   \ben
   \left\|\int_{\Om}\Phi_{0}(x,y)\delta_l^{3/2}\pa_1q(y)\pa_1^4\Phi_{0}(y,z_{\delta_l})dy\right\|_{H^{m+1/2}(\Gamma_\varepsilon)}\leq C. 
   \enn
   Repeating the above procedure $i_0-1$ times, finally we derive that 
   \ben
   \left\|\int_{\Om}\Phi_{0}(x,y)\delta_l^{3/2}\pa_1q(y)\pa_1^{2i_0+2}\Phi_{0}(y,z_{\delta_l})dy\right\|_{H^{3/2}(\Gamma_\varepsilon)}\leq C,  
   \enn
   which implies by the bounded embedding from $H^{3/2}(\Gamma_\varepsilon)$ into $C(\Gamma_\varepsilon)$ that 
   \ben
   \left|\int_{\Om}\frac{1}{|y|}\delta_l^{3/2}\pa_1q(y)\pa_1^{2i_0+2}\frac{1}{|y-z_{\delta_l}|}dy\right|\leq C. 
   \enn
   We further yield by the Taylor's expansion that 
   \ben
   \left|\delta_l^{3/2}\left(\pa_{1}q_0(y)-\frac{\pa_1^{2i_0}q_0(x_0)}{(2i_0-1)!}y_1^{2i_0-1}\right)\pa_{1}^{2i_0+2}\frac{1}{|y-z_{\delta_l}|}\right|\leq\frac{C}{|y-z_{\delta_l}|^{3/2}},~y\in\Om. 
   \enn
   Since $\pa_{1}^{m+1}q_0(x_0)=\pa_{1}^{2i_0}q_0(x_0)\neq0$, we obtain that 
   \ben
   \left|\delta_l^{3/2}\int_{\Om}\frac{1}{|y|}y_1^{2i_0-1}\pa_{1}^{2i_0+2}\frac{1}{|y-z_{\delta_l}|}dy\right|\leq C 
   \enn
   uniformly for $l\in\N$, which clearly contradicts with Lemma \ref{lem3.6}. Therefore, we prove the case when $m$ is odd. 
   
   As for the case that $m=2i_0$ with $i_0\in\N$, from the estimate that 
   \ben
     \left\|\nu\cdot\na G_0(\delta_l^{3/2}\pa_1q\pa_1^2\Phi_{0}(\cdot,z_{\delta_l}))\right\|_{H^{m+3/2}(\Gamma_\varepsilon)}\leq C, 
   \enn
   through the same methods, we can deduce that 
   \ben
     \left|\delta_l^{3/2}\int_{\Om}\frac{1}{|y|}\pa_1\left[\left(\pa_1^{2m+2}\frac{1}{|y-z_{\delta_l}|}\right)y_1^{2m}\right]dy\right|\leq C, 
   \enn
   which contradicts with Lemma \ref{lem3.6} again. The proof is thus finished. 
\end{proof}

Recalling the definition of $H_{0,\delta}$ and $E_{0,\delta}'$, we immediately have the following result. 
\begin{theorem}\label{thm3.8}
	Follow the notations $x_0,\delta_0$ and $z_\delta$ in Theorem {\rm \ref{thm3.2}}. For fixed $m\in\N\cup\{0\}$, let $q\in C^\infty(\ov\Om)$ be such that $D^\alpha q=0$ on $\Gamma_\varepsilon:=\pa\Om\cap B_\varepsilon(x_0)$ with $|\alpha|\leq m$ and $\varepsilon>0$ small, and $\pa_\nu^{m+1}q(x_0)\neq0$, then we have 
	\ben
	\left\|\nu\times\na G_k(\na q\cdot E_{0,\delta}')\right\|_{H_t^{m+3/2}(\pa\Om)}+\left\|\nu\cdot\na G_k(\na q\cdot E_{0,\delta}')\right\|_{H^{m+3/2}(\pa\Om)}\rightarrow+\infty
	\enn
	as $\delta\rightarrow0^+$. 
\end{theorem}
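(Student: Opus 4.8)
The plan is to reduce Theorem \ref{thm3.8} to Theorem \ref{thm3.7}, which has already been established, by exploiting the algebraic relation between $E_{0,\delta}'$ and $H_{0,\delta}$ coming from their definitions. Recall from Theorem \ref{thm3.2} that $H_{0,\delta}=\delta^{3/2}\curl(\nu(x_0)\Phi_{k_0}(x,z_\delta))$ and from Theorem \ref{thm3.4} that $H_{0,\delta}'=\delta^{3/2}\curl(\nu(x_0)\Phi_{k_0}(x,z_\delta))$, so in fact $H_{0,\delta}'=H_{0,\delta}$, while $E_{0,\delta}'=-(1/i\om\gamma_0)\curl H_{0,\delta}'=-(1/i\om\gamma_0)\curl H_{0,\delta}$. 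On the other hand, $H_{0,\delta}=(1/i\om\mu_0)\curl E_{0,\delta}$ with $E_{0,\delta}=\delta^{3/2}\curl(\nu(x_0)\Phi_{k_0}(x,z_\delta))$; using the vector identity $\curl\curl=\na\dive-\Delta$ together with $(\Delta+k_0^2)\Phi_{k_0}(\cdot,z_\delta)=0$ in $\Om$ (since $z_\delta\notin\ov\Om$), one computes $\curl H_{0,\delta}$ explicitly. Concretely, writing $a=\nu(x_0)$ and $\phi=\delta^{3/2}\Phi_{k_0}(\cdot,z_\delta)$, we have $E_{0,\delta}=\curl(a\phi)$, $H_{0,\delta}=(1/i\om\mu_0)\curl\curl(a\phi)=(1/i\om\mu_0)(\na(a\cdot\na\phi)+k_0^2 a\phi)$, and then $E_{0,\delta}'=-(1/i\om\gamma_0)\curl H_{0,\delta}=-(1/i\om\gamma_0)(1/i\om\mu_0)\curl(\na(a\cdot\na\phi)+k_0^2a\phi)=-(k_0^2/i\om\gamma_0 i\om\mu_0)\curl(a\phi)$. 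Since $k_0^2=\om^2\mu_0\gamma_0$, this gives simply $E_{0,\delta}'=-\curl(a\phi)=-E_{0,\delta}$. Wait — more carefully, this identity shows $E_{0,\delta}'=-E_{0,\delta}$ and symmetrically $H_{0,\delta}'=H_{0,\delta}$, but what we actually need to relate is $\na q\cdot E_{0,\delta}'$ to an object whose singularity Theorem \ref{thm3.7} controls; the relevant point is that $E_{0,\delta}'$ has exactly the same leading singularity structure as $H_{0,\delta}$ up to lower-order (uniformly $H^{m+1}$-bounded) terms, since both are built from $\delta^{3/2}$ times second derivatives of $\Phi_{k_0}(\cdot,z_\delta)$.

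With this in hand, the first step is to isolate the leading singular component of $\na q\cdot E_{0,\delta}'$. Exactly as in the proof of Theorem \ref{thm3.7}, one writes out $E_{0,\delta}'$ componentwise in terms of $\pa_1^2\Phi_{k_0}$, $\pa_{12}\Phi_{k_0}$, $\pa_{13}\Phi_{k_0}$ and the benign term $k_0^2\Phi_{k_0}$ (the precise coefficients following from the computation above), and then dots with $\na q$. Using the hypothesis $D^\alpha q(x_0)=0$ for $|\alpha|\le m$, the terms $\delta^{3/2}\pa_1 q\,\Phi_{k_0}(\cdot,z_\delta)$, $\delta^{3/2}\pa_2 q\,\pa_{12}\Phi_{k_0}(\cdot,z_\delta)$, $\delta^{3/2}\pa_3 q\,\pa_{13}\Phi_{k_0}(\cdot,z_\delta)$ are all uniformly bounded in $H^{m+1}(\Om)$ for $\delta\in(0,\delta_0)$, exactly as established in the opening paragraph of the proof of Theorem \ref{thm3.7}; hence their contribution to $\na G_k(\na q\cdot E_{0,\delta}')$ is uniformly bounded in $[H^{m+3/2}(\pa\Om)]^3$ by Theorem \ref{thm3.1} and the trace theorem. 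It therefore suffices to show that $\|\nu\times\na G_k(\delta^{3/2}\pa_1 q\,\pa_1^2\Phi_{0}(\cdot,z_\delta))\|_{H_t^{m+3/2}(\pa\Om)}+\|\nu\cdot\na G_k(\delta^{3/2}\pa_1 q\,\pa_1^2\Phi_{0}(\cdot,z_\delta))\|_{H^{m+3/2}(\pa\Om)}\to+\infty$ — but this is precisely the reduced statement \eqref{3.15} that was proved in the course of establishing Theorem \ref{thm3.7}.

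Accordingly, the proof is essentially a bookkeeping argument: after the algebraic simplification $E_{0,\delta}'=-E_{0,\delta}$ (equivalently, after noting that $\na q\cdot E_{0,\delta}'$ and $\na q\cdot H_{0,\delta}$ have the same leading singular term $c\,\delta^{3/2}\pa_1 q\,\pa_1^2\Phi_{k_0}(\cdot,z_\delta)$ with a nonzero constant $c$ modulo $H^{m+1}(\Om)$-bounded remainders), the divergence of the boundary norms follows verbatim from the argument already run for Theorem \ref{thm3.7}, culminating in the same contradiction with Lemma \ref{lem3.6} in both the $m=2i_0-1$ and $m=2i_0$ cases. I expect the only genuine point requiring care — and the reason the paper says "the detailed proof is thus omitted" — is verifying that the coefficient of the leading $\pa_1^2\Phi_{k_0}$ term is indeed nonzero after the $\curl\curl$ computation; once $k_0^2=\om^2\mu_0\gamma_0$ is used this is immediate, but one must keep track of the constants $1/i\om\mu_0$ and $1/i\om\gamma_0$ correctly so as not to accidentally cancel the singular part. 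No new analytic ideas beyond those in Theorem \ref{thm3.7} and Lemma \ref{lem3.6} are needed.
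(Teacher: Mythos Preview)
Your overall strategy is exactly the paper's: the line preceding Theorem \ref{thm3.8} reads ``Recalling the definition of $H_{0,\delta}$ and $E_{0,\delta}'$, we immediately have the following result,'' i.e.\ the theorem reduces to Theorem \ref{thm3.7} once one observes the algebraic relation between $E_{0,\delta}'$ and $H_{0,\delta}$. Your second and third paragraphs carry this out correctly.

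However, the algebra in your first paragraph is muddled and should be cleaned up. You write that $H_{0,\delta}=\delta^{3/2}\curl(\nu(x_0)\Phi_{k_0}(\cdot,z_\delta))$; this is the definition of $E_{0,\delta}$, not $H_{0,\delta}$. Consequently your conclusion $H_{0,\delta}'=H_{0,\delta}$ is wrong (in fact $H_{0,\delta}'=E_{0,\delta}$), and the identity $E_{0,\delta}'=-E_{0,\delta}$ you derive is incorrect and would not by itself reduce to Theorem \ref{thm3.7}, since $E_{0,\delta}$ involves only \emph{first} derivatives of $\Phi_{k_0}$ and lacks the $\pa_1^2\Phi_{k_0}$ term that drives the blow-up. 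The correct and much simpler relation is
\[
E_{0,\delta}'=-\frac{1}{i\om\gamma_0}\curl H_{0,\delta}'=-\frac{1}{i\om\gamma_0}\curl E_{0,\delta}=-\frac{1}{i\om\gamma_0}\,(i\om\mu_0)H_{0,\delta}=-\frac{\mu_0}{\gamma_0}\,H_{0,\delta},
\]
an exact nonzero scalar multiple. With this, $\na q\cdot E_{0,\delta}'=-(\mu_0/\gamma_0)\,\na q\cdot H_{0,\delta}$ and Theorem \ref{thm3.8} follows \emph{verbatim} from Theorem \ref{thm3.7}; no separate componentwise expansion or ``leading singularity'' comparison is needed. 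Your eventual statement that $E_{0,\delta}'$ has ``the same leading singularity structure as $H_{0,\delta}$'' is therefore true for the best possible reason --- equality up to a constant --- and the remainder of your argument is sound, just more elaborate than necessary.
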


\section{Proofs of main results}\label{sec4}
\setcounter{equation}{0}
In this section, we first study some further properties for the functions defined in Theorems \ref{thm3.2} and \ref{thm3.4}. Then we give the proof of the main results, i.e., Theorems \ref{thm2.1}. 

In this section, we always assume that the condition in Theorem \ref{thm2.1} holds, i.e., $\Lambda_1=\Lambda_2$ locally on $\Gamma$. 
\begin{lemma}\label{lem4.1}
	$\mu_1=\mu_2$ and $\gamma_1=\gamma_2$ on $\Gamma$. 
\end{lemma}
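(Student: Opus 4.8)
The plan is to argue by contradiction: suppose $\mu_1(x_0)\neq\mu_2(x_0)$ or $\gamma_1(x_0)\neq\gamma_2(x_0)$ at some point $x_0\in\Gamma$, and derive a contradiction with the singularity estimates of Theorem \ref{thm3.7} (equivalently Theorem \ref{thm3.8}). Fix such an $x_0$, choose $\delta_0>0$ and the exterior points $z_\delta=x_0+\delta\nu(x_0)$ as in Theorem \ref{thm3.2}, and build the singular boundary data $\nu\times E_{0,\delta}$ with $E_{0,\delta}=\delta^{3/2}\curl(\nu(x_0)\Phi_{k_0}(x,z_\delta))$. Since $z_\delta\notin\ov\Om$, this data is smooth on $\pa\Om$ and, crucially, it is supported away from $x_0$ only in the limit — but one localizes by a cutoff so that the actual admissible boundary data has support in $\Gamma$; the difference between the cutoff data and $\nu\times E_{0,\delta}$ is uniformly bounded in every $H^s_{\Div}(\pa\Om)$ for $\delta\in(0,\delta_0)$ because it is supported away from $x_0$. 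Let $(E_\delta^{(i)},H_\delta^{(i)})$ be the solution of \eqref{2.1} for the parameters $(\mu_i,\gamma_i,D_i,\mathcal{B}_i)$ with this (localized) data. The hypothesis $\Lambda_1=\Lambda_2$ on $\Gamma$ for data supported in $\Gamma$ gives $\nu\times H_\delta^{(1)}=\nu\times H_\delta^{(2)}$ on $\Gamma$, and of course $\nu\times E_\delta^{(1)}=\nu\times E_\delta^{(2)}=f$ on $\Gamma$ too; hence the full Cauchy data of $E_\delta^{(1)}-E_\delta^{(2)}$ agree on $\Gamma$.

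The key step is to compare the singular expansions furnished by Theorem \ref{thm3.2} for the two sets of parameters. Both $E_\delta^{(i)}-E_{0,\delta}$ admit, modulo a $\delta$-uniformly bounded $H^{m+1}$ remainder, the explicit approximant $\Phi_{m,\delta}^{(i)}$ (and similarly $\Psi_{m,\delta}^{(i)}$ for $H$), with matching boundary approximants $\varphi_{m,\delta}^{(i)}$, $\psi_{m,\delta}^{(i)}$. The same singular background field $E_{0,\delta}$ appears for both (since it depends only on $k_0=\om\sqrt{\mu_0\gamma_0}$ — here one must first treat the case $k_0^{(1)}=k_0^{(2)}$, i.e. $\mu_1\gamma_1=\mu_2\gamma_2$ at $x_0$; the case $\mu_1(x_0)\gamma_1(x_0)\neq\mu_2(x_0)\gamma_2(x_0)$ should be handled separately by comparing the leading-order blow-up rates of the two background fields restricted to $\Gamma$, which differ). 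So, assuming $k_0^{(1)}=k_0^{(2)}=k_0$, the background cancels in the difference, and the equality of Cauchy data on $\Gamma$ forces
\[
  \|\varphi_{m,\delta}^{(1)}-\varphi_{m,\delta}^{(2)}\|_{H^{m+1/2}(\Gamma_\varepsilon)}
  +\|\psi_{m,\delta}^{(1)}-\psi_{m,\delta}^{(2)}\|_{H^{m+1/2}_t(\Gamma_\varepsilon)}\leq C
\]
uniformly in $\delta$, for every $m$. The leading contribution to $\varphi_{1,\delta}^{(i)}$, $\psi_{1,\delta}^{(i)}$ involves precisely terms of the form $\na G_{k_0}(\na\gamma_i\cdot E_{0,\delta})$ and $\na G_{k_0}(\na\mu_i\cdot H_{0,\delta})$ (plus $G_{k_0}(\gamma_i(\mu_i-\mu_0)H_{0,\delta})$-type terms, which one checks are of lower order). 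One then runs an induction on $m$: if $D^\alpha\mu_1=D^\alpha\mu_2$ and $D^\alpha\gamma_1=D^\alpha\gamma_2$ on $\Gamma_\varepsilon$ for all $|\alpha|\leq m$ (the base case $m=0$ is the current Lemma, the inductive step is the content of the later part of section \ref{sec4}), then all the $\na\log\gamma_i$, $\na\log\mu_i$ etc. coincide on $\Gamma_\varepsilon$ to order $m$, so the only way the difference $\varphi_{m+1,\delta}^{(1)}-\varphi_{m+1,\delta}^{(2)}$ can fail to be $\delta$-bounded is through a genuine discrepancy in the $(m+1)$-st normal derivative. Setting $q=\mu_1-\mu_2$ (resp.\ $\gamma_1-\gamma_2$), we have $D^\alpha q=0$ on $\Gamma_\varepsilon$ for $|\alpha|\leq m$ but $\pa_\nu^{m+1}q(x_0)\neq0$, so Theorem \ref{thm3.7} (resp.\ Theorem \ref{thm3.8}) applies and says the relevant norm blows up — contradicting the bound just derived. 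For the present Lemma only the base case $m=0$ is needed: if $\gamma_1(x_0)\neq\gamma_2(x_0)$ then $q=\gamma_1-\gamma_2$ satisfies $\pa_\nu^0 q(x_0)=q(x_0)\neq0$; but actually Theorem \ref{thm3.7} requires $\pa_\nu^{m+1}q(x_0)\neq0$ with $m+1\geq1$, so one instead uses that a nonzero value $q(x_0)$ produces a term $q(x_0)\cdot(\text{blow-up})$ in $\Phi_{1,\delta}^{(1)}-\Phi_{1,\delta}^{(2)}$ of the form $i\om\,\curl\mathcal{G}_{k_0}((\gamma_1-\gamma_2)(\mu-\mu_0)H_{0,\delta})$ — here one should rather exploit the leading $G_{k_0}$-potential term directly and invoke the $m=0$ instance of the exploding-norm analysis.

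The main obstacle I expect is twofold. First, the bookkeeping of \emph{which} term in the recursively-defined $\varphi_{m,\delta}$, $\psi_{m,\delta}$, $\Phi_{m,\delta}$, $\Psi_{m,\delta}$ carries the dominant singularity: one must show that every term except the ``new'' $\na G_{k_0}(\na q\cdot H_{0,\delta})$-type contribution is $\delta$-uniformly bounded in the appropriate $H^{m+3/2}$ norm under the inductive hypothesis — this uses Remark \ref{re3.3} (and \ref{re3.5}), the mapping properties of Theorem \ref{thm3.1}, and the fact that products like $(\na\log\gamma_1-\na\log\gamma_2)\times\Phi_{m,\delta}$ vanish to high order near $x_0$. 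Second, the reduction to the common background field: when $\mu_1(x_0)\gamma_1(x_0)=\mu_2(x_0)\gamma_2(x_0)$ one still may have $\mu_1(x_0)\neq\mu_2(x_0)$, and one has to see that this discrepancy survives in the comparison — it enters through the $H_{0,\delta}=(1/i\om\mu_0)\curl E_{0,\delta}$ normalization and through the $(\mu-\mu_0)H_{0,\delta}$-terms, so the cancellation of $E_{0,\delta}$ is not complete and a nonzero $\mu_1(x_0)-\mu_2(x_0)$ leaves a blow-up in $\psi_{1,\delta}^{(1)}-\psi_{1,\delta}^{(2)}$ via the $\nu\times\na S_{k_0}(\nu\cdot(\mu_i-\mu_0)H_{0,\delta})$ term. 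Making all these dominant/subordinate splittings precise, with the $H^{m+3/2}(\Gamma_\varepsilon)$ norms and the straightened boundary of Lemma \ref{lem3.6}, is the technical heart of the argument.
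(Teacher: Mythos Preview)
Your approach differs substantially from the paper's, and for the base case it has a genuine gap that you yourself half-recognize. The paper does \emph{not} use the $\Phi_{m,\delta}$, $\Psi_{m,\delta}$ machinery of Theorem~\ref{thm3.2} or the exploding norms of Theorem~\ref{thm3.7} to prove Lemma~\ref{lem4.1}; that machinery is set up precisely under the standing assumption (stated just before Lemma~\ref{lem4.2}) that Lemma~\ref{lem4.1} already holds, so that $\mu_0,\gamma_0,k_0$ and hence the background pair $(E_{0,\delta},H_{0,\delta})$ are common to both parameter sets. Your attempt to bootstrap it for the zeroth-order case runs into two concrete obstructions. First, Theorem~\ref{thm3.7} requires $D^\alpha q=0$ on $\Gamma_\varepsilon$ for $|\alpha|\le m$; at $m=0$ this means $q\equiv0$ on $\Gamma_\varepsilon$, which is exactly the statement you are trying to prove, so the theorem cannot be invoked when $q(x_0)\neq0$. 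Your fallback to the term $i\om\,\curl\mathcal G_{k_0}((\gamma_1-\gamma_2)(\mu-\mu_0)H_{0,\delta})$ does not rescue the argument, since $(\mu-\mu_0)$ already vanishes at $x_0$ and the product has no reason to blow up in $H^{3/2}$. Second, when $\mu_1(x_0)\gamma_1(x_0)\neq\mu_2(x_0)\gamma_2(x_0)$ you propose to compare ``leading-order blow-up rates'' of the two backgrounds; but the leading singularity of $\Phi_{k_0}(\cdot,z_\delta)$ is $1/(4\pi|x-z_\delta|)$ independently of $k_0$, so those rates coincide and no contradiction arises that way.

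The paper's proof is instead a short Alessandrini-type integral identity with \emph{differently scaled} singular data: one sets $U_{0,\delta}=\delta^{1/2}\curl(\nu(x_0)\Phi_0(\cdot,z_\delta))$ (note $\delta^{1/2}$, not $\delta^{3/2}$, and $\Phi_0$, not $\Phi_{k_0}$, so the background is parameter-free), localizes by a cutoff $\eta$ supported in $\Gamma$, and lets $(\wid U_\delta^{(i)},\wid V_\delta^{(i)})$ solve \eqref{2.1} with data $\nu\times\eta U_{0,\delta}$. Integrating the two Maxwell systems against each other over a subdomain $D_0$ with $\pa D_0\cap\pa\Om\subset\Gamma$ and using $\Lambda_1=\Lambda_2$ on $\Gamma$ to cancel the $\pa\Om$-boundary terms yields
\[
\int_{D_0}(\mu_1-\mu_2)\,\wid V_\delta^{(1)}\!\cdot\wid V_\delta^{(2)}\,dx
=\int_{D_0}(\gamma_1-\gamma_2)\,\wid U_\delta^{(1)}\!\cdot\wid U_\delta^{(2)}\,dx
+(\text{terms on }\pa D_0\setminus\pa\Om),
\]
where the right side is uniformly bounded in $\delta$ by the $H(\curl)$ estimate~\eqref{4.1}. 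If $\mu_1(x_0)\neq\mu_2(x_0)$, the left side diverges as $\delta\to0^+$ since $V_{0,\delta}^{(i)}\sim\delta^{1/2}\na^2\Phi_0(\cdot,z_\delta)$ is not $L^2$-bounded near $x_0$. A dual choice of data $U'_{0,\delta}=\delta^{1/2}\curl\curl(\nu(x_0)\Phi_0(\cdot,z_\delta))$ handles $\gamma$. This bypasses entirely the issues of matching $k_0$ or invoking Theorem~\ref{thm3.7}.
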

\begin{proof}
	Suppose there exists a $x_0\in\Gamma$ such that $\mu_1(x_0)\neq\mu_2(x_0)$. Set $\mu_0^{(i)}=\mu_i(x_0)$ and $\gamma_0^{(i)}=\gamma_i(x_0)$, $i=1,2$. Follow the notations $\delta_0$ and $z_\delta$ in Theorem \ref{thm3.2}. Define $U_{0,\delta}:=\delta^{1/2}\curl(\nu(x_0)\Phi_0(\cdot,z_\delta))$ and $V_{0,\delta}^{(i)}=(1/i\om\mu_0^{(i)})\curl U_{0,\delta}$ for $i=1,2$ and $\delta\in(0,\delta_0)$. Let $(U_\delta^{(i)},V_\delta^{(i)})$ be the unique solution to problem \eqref{2.1} with respect to the boudnary data $\nu\times U_{0,\delta}$ and $(\mu_i,\varepsilon_i,D_i,\mathcal{B}_i)$, $i=1,2$. Then it can be verified that for $i=1,2$, 
	\ben
	\left\{
	\begin{array}{ll}
		\curl(U_\delta^{(i)}-U_{0,\delta})-i\om\mu_i(V_\delta^{(i)}-V_{0,\delta}^{(i)})=i\om(\mu_i-\mu_0^{(i)})V_{0,\delta}^{(i)}~~~&{\rm in}~\Om\setminus\ov D_i,\\
		\curl(V_\delta^{(i)}-V_{0,\delta}^{(i)})+i\om\gamma_i(U_\delta^{(i)}-U_{0,\delta})=-i\om\gamma_i U_{0,\delta}~~~&{\rm in}~\Om\setminus\ov D_i,\\
		\nu\times(U_\delta^{(i)}-U_{0,\delta})=0~~~&{\rm on}~\pa\Om, \\
		\mathcal{B}_i(U_\delta^{(i)}-U_{0,\delta})=-\mathcal{B}_i(U_{0,\delta})~~~&{\rm on}~\pa D_i. 
	\end{array}
	\right.
	\enn
	By the regularity of Maxwell's equation, we thus obtain that 
	$
	  \|U_\delta^{(i)}-U_{0,\delta}\|_{H(\curl,\Om\setminus\ov D_i)}+\|V_\delta^{(i)}-V_{0,\delta}^{(i)}\|_{H(\curl,\Om\setminus\ov D_i)}\leq C
	$
	with $i=1,2$ and uniformly for $\delta$. For $x_0$, we can find a small $\varepsilon>0$ such that $\pa\Om\cap B_{2\varepsilon}(x_0)\subset\Gamma$ and $B_{2\varepsilon}(x_0)\cap(D_1\cup D_2)=\emptyset$. Let $\eta\in C^\infty(\R^3)$ be the cut-off function satisfying that $\eta=1$ in $B_\varepsilon(x_0)$ and $\eta=0$ in $\R^3\setminus\ov{B_{2\varepsilon}(x_0)}$. Define $(\wid U_\delta^{(i)},\wid V_\delta^{(i)})$, $i=1,2$, to be the unique solution to problem \eqref{2.1} with the boundary data $\nu\times \eta U_{0,\delta}$. Still, we have the estimates that 
	\be\label{4.1}
	  \|\wid U_\delta^{(i)}-U_{0,\delta}\|_{H(\curl,\Om\setminus\ov D_i)}+\|\wid V_\delta^{(i)}-V_{0,\delta}^{(i)}\|_{H(\curl,\Om\setminus\ov D_i)}\leq C
	\en
	uniformly for $\delta$. Now further choose a $C^2$ domain $D_0\subset\Om$ such that $B_{2\varepsilon}(x_0)\cap\Om\subset D_0$, $D_0\cap(D_1\cup D_2)=\emptyset$ and $\pa\Om\cap\pa D_0\subset\Gamma$. Note that $\nu\times\wid V_\delta^{(1)}=\nu\times\wid V_\delta^{(2)}$ on $\pa\Om\cap\pa D_0$. Integration by parts over $D_0$ then yields that 
	\ben
	  \int_{D_0}\gamma_2\wid U_\delta^{(1)}\cdot\wid U_\delta^{(2)}dx-\frac{1}{i\om}\int_{\pa D_0}\nu\times\wid U_\delta^{(1)}\cdot[(\nu\times\wid V_\delta^{(2)})\times\nu]ds+\int_{D_0}\mu_1\wid V_\delta^{(1)}\cdot\wid V_\delta^{(2)}dx=0, \\
	  \int_{D_0}\gamma_1\wid U_\delta^{(1)}\cdot\wid U_\delta^{(2)}dx-\frac{1}{i\om}\int_{\pa D_0}\nu\times\wid U_\delta^{(2)}\cdot[(\nu\times\wid V_\delta^{(1)})\times\nu]ds+\int_{D_0}\mu_2\wid V_\delta^{(1)}\cdot\wid V_\delta^{(2)}dx=0, 
	\enn
	which implies that 
	\begin{align*}
	  &\;\int_{D_0}(\mu_1-\mu_2)\wid V_\delta^{(1)}\cdot\wid V_\delta^{(2)}dx \\
	  =&\;\int_{D_0}(\gamma_1-\gamma_2)\wid U_\delta^{(1)}\cdot\wid U_\delta^{(2)}dx+\frac{1}{i\om}\int_{\pa D_0\setminus\pa\Om}\nu\times\wid U_\delta^{(1)}\cdot[(\nu\times\wid V_\delta^{(2)})\times\nu]ds \\
	  &\;-\frac{1}{i\om}\int_{\pa D_0\setminus\pa\Om}\nu\times\wid U_\delta^{(2)}\cdot[(\nu\times\wid V_\delta^{(1)})\times\nu]ds. 
	\end{align*}
	From the estimats \eqref{4.1}, we deduce that 
	\ben
	  \left|\int_{D_0}(\mu_1-\mu_2)V_{0,\delta}^{(1)}\cdot V_{0,\delta}^{(2)}dx\right|\leq C
	\enn
	uniformly for $\delta\in(0,\delta_0)$. However, by the singularity of $V_{0,\delta}^{(i)}$, this contradicts with $\mu_1(x_0)\neq\mu_2(x_0)$ as $\delta\rightarrow0^+$. Therefore, $\mu_1=\mu_2$ on $\Gamma$. 
	
	Now suppose $\gamma_1(x_0)\neq\gamma_2(x_0)$ for $x_0\in\Gamma$. Define $U_{0,\delta}'=\delta^{1/2}\curl\curl(\nu(x_0)$ $\Phi_0(\cdot,z_\delta))$ and $V_{0,\delta}^{(i)\prime}=i\om\gamma_0^{(i)}\delta^{1/2}\curl(\nu(x_0)\Phi_0(\cdot,z_\delta))$. Let $(\wid U_\delta^{(i)\prime},\wid V_\delta^{(i)\prime})$, $i=1,2$, be the unique solution to problem \eqref{2.1} with the boundary data $\nu\times\eta U_{0,\delta}'$. Utilizing the same method as above, we again obtain a contradiction and thus $\gamma_1=\gamma_2$ on $\Gamma$, which finishes the proof. 
\end{proof}

Since $\mu_1=\mu_2$ and $\gamma_1=\gamma_2$ on $\Gamma$, in the following, for $x_0\in\Gamma$ we still use the notation $\mu_0,\gamma_0$ and $k_0$ in Theorem \ref{thm3.2} (rather than $\mu_0^{(i)},\gamma_0^{(i)}$). Further, for $(\mu_i,\varepsilon_i,D_i,\mathcal{B}_i)$, $i=1,2$, denote by $(E_\delta^{(i)},H_\delta^{(i)})$, $\varphi_{m,\delta}^{(i)},\psi_{m,\delta}^{(i)},\Phi_{m,\delta}^{(i)}$ and $\Psi_{m,\delta}^{(i)}$ the functions defined in Theorem \ref{thm3.2} with $m\in\N\cup\{-1,0\}$ and $\delta\in(0,\delta_0)$. (For functions defined in Theorem \ref{thm3.4}, we apply the same manner for notations.) 
\begin{lemma}\label{lem4.2}
	For any fixed $x_0\in\Gamma$ and $m\in\N$, the following estimates 
	\begin{align*}
	  &\|\varphi_{m,\delta}^{(1)}-\varphi_{m,\delta}^{(2)}\|_{H^{m+1/2}(\pa\Om)}+\|\nu\times(\Phi_{m,\delta}^{(1)}-\Phi_{m,\delta}^{(2)})\|_{H^{m+1/2}_t(\pa\Om)}\leq C, \\
	  &\|\psi_{m,\delta}^{(1)}-\psi_{m,\delta}^{(2)}\|_{H^{m+1/2}_t(\pa\Om)}+\|\nu\cdot(\Psi_{m,\delta}^{(1)}-\Psi_{m,\delta}^{(2)})\|_{H^{m+1/2}(\pa\Om)}\leq C 
	\end{align*}
	hold uniformly for $\delta\in(0,\delta_0)$. 
\end{lemma}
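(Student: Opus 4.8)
The plan is to bypass the recursive formulas entirely and instead compare the \emph{exact} electromagnetic fields $(E^{(i)}_\delta,H^{(i)}_\delta)$ for the two parameter sets, exploiting that $\Lambda_1=\Lambda_2$ on $\Gamma$ forces their Cauchy data to agree there. Fix $x_0\in\Gamma$; since $\mu_1=\mu_2$ and $\gamma_1=\gamma_2$ on $\Gamma$ by Lemma~\ref{lem4.1}, the numbers $\mu_0,\gamma_0,k_0$ and the singular fields $E_{0,\delta},H_{0,\delta}$ are common to $i=1,2$. Choose $\varepsilon>0$ small enough that $\overline{\pa\Om\cap B_{2\varepsilon}(x_0)}\subset\Gamma$ and a cut-off $\eta\in C^\infty(\R^3)$ with $\eta=1$ on $B_\varepsilon(x_0)$ and $\eta=0$ outside $B_{2\varepsilon}(x_0)$. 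Because $z_\delta\notin\ov\Om$, for each fixed $\delta$ the data $\nu\times E_{0,\delta}$ is smooth on $\pa\Om$, and owing to the prefactor $\delta^{3/2}$ the field $(1-\eta)E_{0,\delta}$ tends to $0$ in $C^\infty(\pa\Om)$ as $\delta\to0^+$. I would then write $\nu\times(H^{(1)}_\delta-H^{(2)}_\delta)=(\Lambda_1-\Lambda_2)(\nu\times E_{0,\delta})$ on $\pa\Om$ and split the argument as $\nu\times\eta E_{0,\delta}$ (supported in $\Gamma$, hence annihilated on $\Gamma$ by $\Lambda_1-\Lambda_2$) plus $\nu\times(1-\eta)E_{0,\delta}$; by the well-posedness and regularity of \eqref{2.1} the solution corresponding to the latter is uniformly bounded in every Sobolev norm near $\pa\Om$, so its contribution to $\Lambda_1-\Lambda_2$ is uniformly bounded in every $H^s_t(\pa\Om)$. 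Consequently $\nu\times(H^{(1)}_\delta-H^{(2)}_\delta)$ is uniformly bounded in $H^{m+3/2}_t(\Gamma)$.

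Next I would recover the normal traces on $\Gamma$ from the Maxwell equations. Using the surface identity $\nu\cdot\curl u=\Div(\nu\times u)$ on $\pa\Om$ together with $\gamma_iE^{(i)}_\delta=-(1/i\om)\curl H^{(i)}_\delta$, one gets $\nu\cdot\gamma_iE^{(i)}_\delta=-(1/i\om)\Div(\nu\times H^{(i)}_\delta)$ on $\pa\Om$; subtracting and using that $\Div$ is a local first-order operator, $\nu\cdot\gamma_1E^{(1)}_\delta-\nu\cdot\gamma_2E^{(2)}_\delta$ is uniformly bounded in $H^{m+1/2}(\Gamma)$. Since $\gamma_1=\gamma_2$ and $\mu_1=\mu_2$ on $\Gamma$, the quantities $\nu\cdot(\gamma_1-\gamma_2)E_{0,\delta}$, $\nu\times(\mu_1-\mu_2)H_{0,\delta}$ and $\nu\times(\mu_1-\mu_2)H^{(2)}_\delta$ restrict to zero on $\Gamma$, so on $\Gamma$ we still have $\nu\cdot\gamma_1(E^{(1)}_\delta-E_{0,\delta})-\nu\cdot\gamma_2(E^{(2)}_\delta-E_{0,\delta})$ bounded in $H^{m+1/2}(\Gamma)$, and $\nu\times\mu_1(H^{(1)}_\delta-H_{0,\delta})-\nu\times\mu_2(H^{(2)}_\delta-H_{0,\delta})=\mu_1\,\nu\times(H^{(1)}_\delta-H^{(2)}_\delta)$, which is bounded in $H^{m+1/2}_t(\Gamma)$.

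Now I would invoke Theorem~\ref{thm3.2}: $\varphi^{(i)}_{m,\delta}-\nu\cdot\gamma_i(E^{(i)}_\delta-E_{0,\delta})$ is bounded in $H^{m+1/2}(\pa\Om)$ and $\psi^{(i)}_{m,\delta}-\nu\times\mu_i(H^{(i)}_\delta-H_{0,\delta})$ is bounded in $H^{m+1/2}_t(\pa\Om)$, uniformly in $\delta$. Combining with the previous paragraph shows that $\varphi^{(1)}_{m,\delta}-\varphi^{(2)}_{m,\delta}$ and $\psi^{(1)}_{m,\delta}-\psi^{(2)}_{m,\delta}$ are uniformly bounded in $H^{m+1/2}$, resp.\ $H^{m+1/2}_t$, of $\pa\Om\cap B_{2\varepsilon}(x_0)$; on the other hand, by Remark~\ref{re3.3} each of $\varphi^{(i)}_{m,\delta}$, $\psi^{(i)}_{m,\delta}$ is uniformly bounded in the corresponding norm on $\pa\Om\setminus\ov{B_\varepsilon(x_0)}$. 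Since $\{B_{2\varepsilon}(x_0),\ \R^3\setminus\ov{B_\varepsilon(x_0)}\}$ covers $\pa\Om$, a smooth partition of unity subordinate to this cover together with the Sobolev multiplication estimate yields the desired uniform $H^{m+1/2}(\pa\Om)$, resp.\ $H^{m+1/2}_t(\pa\Om)$, bounds for $\varphi^{(1)}_{m,\delta}-\varphi^{(2)}_{m,\delta}$ and $\psi^{(1)}_{m,\delta}-\psi^{(2)}_{m,\delta}$.

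The remaining two estimates are easier and do not use the Cauchy data. By Remark~\ref{re3.3}, $\|\nu\times\Phi^{(i)}_{m,\delta}\|_{H^{m+1/2}_t(\pa\Om)}$ and $\|\nu\cdot\Psi^{(i)}_{m,\delta}+\nu\cdot(\mu_i-\mu_0)H_{0,\delta}\|_{H^{m+1/2}(\pa\Om)}$ are already uniformly bounded for each $i$; hence $\nu\times(\Phi^{(1)}_{m,\delta}-\Phi^{(2)}_{m,\delta})$ is immediately controlled, and writing $\nu\cdot(\Psi^{(1)}_{m,\delta}-\Psi^{(2)}_{m,\delta})$ as the difference of the two bounded quantities $\nu\cdot\Psi^{(i)}_{m,\delta}+\nu\cdot(\mu_i-\mu_0)H_{0,\delta}$ minus $\nu\cdot(\mu_1-\mu_2)H_{0,\delta}$, it remains only to bound the last term, which is uniformly bounded in $H^{m+1/2}(\pa\Om)$ since $\mu_1-\mu_2$ restricts to zero on $\Gamma\supset\pa\Om\cap B_{2\varepsilon}(x_0)$ while $H_{0,\delta}$ is uniformly bounded in $C^\infty$ on $\pa\Om\setminus\ov{B_\varepsilon(x_0)}$ (once more a partition of unity). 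The main difficulty is the first step: making rigorous the comparison of Cauchy data, i.e.\ localizing the singular boundary data to $\Gamma$ while keeping uniform control of the smooth remainder, and converting the agreement of the tangential traces $\nu\times H^{(i)}_\delta$ on $\Gamma$ into the needed estimates on the normal traces $\nu\cdot\gamma_iE^{(i)}_\delta$ via the Maxwell equations; once this is in hand, the globalization by partition of unity and the reduction to Theorem~\ref{thm3.2} and Remark~\ref{re3.3} are routine.
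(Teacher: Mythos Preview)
Your proposal is correct and, for three of the four estimates, mirrors the paper's argument closely: the localisation by the cut-off $\eta$, the use of $\Lambda_1=\Lambda_2$ on $\Gamma$ to compare tangential traces, the appeal to Theorem~\ref{thm3.2} to pass from the actual solutions to the recursive quantities, and the use of Remark~\ref{re3.3} away from $x_0$ together with a partition of unity are precisely what the paper does for $\psi_{m,\delta}^{(i)}$, $\Phi_{m,\delta}^{(i)}$ and $\Psi_{m,\delta}^{(i)}$.

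The one place your route differs is the $\varphi_{m,\delta}^{(i)}$ estimate. You recover $\nu\cdot\gamma_i E_\delta^{(i)}$ on $\Gamma$ directly from $\nu\times H_\delta^{(i)}$ via the surface identity $\nu\cdot\curl H=\Div(\nu\times H)$, which is a first-order local operator; since $\nu\times(H_\delta^{(1)}-H_\delta^{(2)})$ is in fact bounded in every $H^s_t(\Gamma)$ you lose only one derivative and are done. The paper instead passes through a potential-theoretic identity: it rewrites $\mathcal{S}_{k_0}(\nu\cdot\gamma_i(\wid E_\delta^{(i)}-E_{0,\delta}))$ in terms of $\dive\mathcal{S}_{k_0}(\nu\times(\wid H_\delta^{(i)}-H_{0,\delta}))$, takes the normal derivative on $\pa\Om$ to obtain a bound on $(I+2K'_{k_0})$ applied to the difference, and then bootstraps using the one-order smoothing of $K'_{k_0}$ from Theorem~\ref{thm3.1}. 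Your argument is more elementary and avoids this induction; the paper's version has the minor advantage of yielding the bound globally on $\pa\Om$ in one stroke, whereas you patch $\Gamma$ with $\pa\Om\setminus\ov{B_\varepsilon(x_0)}$ by partition of unity---but since the paper itself relies on the same patching for $\psi$, this is not a substantive difference.
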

\begin{proof}
	Since $\mu_1=\mu_2$ on $\Gamma$, from Remark \ref{re3.3} we immediately obtain that $\|\nu\times(\Phi_{m,\delta}^{(1)}-\Phi_{m,\delta}^{(2)})\|_{H^{m+1/2}_t(\pa\Om)}$ and $\|\nu\cdot(\Psi_{m,\delta}^{(1)}-\Psi_{m,\delta}^{(2)})\|_{H^{m+1/2}(\pa\Om)}$ are uniformly bounded for $\delta$. Again let $\varepsilon>0$ be such that $\pa\Om\cap B_{2\varepsilon}(x_0)\subset\Gamma$ and $B_{2\varepsilon}(x_0)\cap(D_1\cup D_2)=\emptyset$, and $\eta\in C^\infty(\R^3)$ be the corresponding cut-off function in the proof of Lemma \ref{lem4.1}. Define $(u_\delta^{(i)},v_\delta^{(i)})$, $i=1,2$, to be the unique solution to problem \eqref{2.1} with the boundary data $\nu\times\eta E_{0,\delta}$. Then we know that $\nu\times v_\delta^{(1)}=\nu\times v_\delta^{(2)}$ on $\Gamma$. Further, by the regularity of Maxwell's equation, we deduce that for arbitrarily fixed $m\in\N$ and $i=1,2$, 
	\ben
	  \|u_\delta^{(i)}-E_\delta^{(i)}\|_{H^m(\Om\setminus\ov D_i)}+\|v_\delta^{(i)}-H_\delta^{(i)}\|_{H^m(\Om\setminus\ov D_i)}\leq C
	\enn
	uniformly for $\delta$, which implies $\|\psi_{m,\delta}^{(1)}-\psi_{m,\delta}^{(2)}\|_{H^{m+1/2}_t(\pa\Om)}\leq C$ by Theorem \ref{thm3.2} and Remark \ref{re3.3}. 
	
	Finally we show the estimate about $\varphi_{m,\delta}^{(i)}$. Recalling the functions $(\wid E_\delta^{(i)},\wid H_\delta^{(i)})$ in the proof of Theorem \ref{thm3.2}. In view of \eqref{3.3}, it suffices to prove that for $m\in\N\cup\{0\}$, 
	\be\label{4.2}
	  \|\nu\cdot[\gamma_1(\wid E_\delta^{(1)}-E_{0,\delta})-\gamma_2(\wid E_\delta^{(2)}-E_{0,\delta})]\|_{H^{m+1/2}(\pa\Om)}\leq C
	\en
	uniformly for $\delta$, which clearly holds when $m=0$. From the identity  
	\begin{align*}
	  &\quad\dive_x(\Phi_{k_0}(x,y)\nu(y)\times(\wid H_\delta^{(i)}(y)-H_{0,\delta}(y))) \\
 	  &=\nu(y)\cdot\curl_y(\Phi_{k_0}(x,y)(\wid H_\delta^{(i)}(y)-H_{0,\delta}(y)))-\Phi_{k_0}(x,y)\nu(y)\cdot\curl_y(\wid H_\delta^{(i)}(y)-H_{0,\delta}(y))
	\end{align*}
	and the second equation in \eqref{3.5}, we see that in $\Om$ 
	\ben
	  \mathcal{S}_{k_0}(\nu\cdot\gamma_i(\wid E_\delta^{(i)}-E_{0,\delta}))=\frac{1}{i\om}\dive\mathcal{S}_{k_0}(\nu\times(\wid H_\delta^{(i)}-H_{0,\delta}))-\mathcal{S}_{k_0}(\nu\cdot(\gamma_i-\gamma_0)E_{0,\delta})
	\enn
	with $i=1,2$. Since $\nu\times v_\delta^{(1)}=\nu\times v_\delta^{(2)}$ on $\Gamma$, we obtain that for all $m\in\N$, $\|\nu\times[(\wid H_\delta^{(1)}-H_{0,\delta})-(\wid H_\delta^{(2)}-H_{0,\delta})]\|_{H^{m+1/2}_t(\pa\Om)}\leq C$ uniformly for $\delta$, which indicates that for any fixed $m\in\N$, 
	\ben
	  \|\mathcal{S}_{k_0}(\nu\cdot[\gamma_1(\wid E_\delta^{(1)}-E_{0,\delta})-\gamma_2(\wid E_\delta^{(2)}-E_{0,\delta})])\|_{[H^m(\Om)]^3}\leq C. 
	\enn
	Taking the normal derivative on $\pa\Om$ then yields that for any fixed $m\in\N$
	\be\label{4.3}
	  \|(I+2{K}_{k_0}')(\nu\cdot[\gamma_1(\wid E_\delta^{(1)}-E_{0,\delta})-\gamma_2(\wid E_\delta^{(2)}-E_{0,\delta})])\|_{H^{m+1/2}(\pa\Om)}\leq C. 
	\en
	Now suppose \eqref{4.2} holds true when $m\leq l$ with $l\geq0$. From Theorem \ref{thm2.1} we know  
	\ben
	\|{K}_{k_0}'(\nu\cdot[\gamma_1(\wid E_\delta^{(1)}-E_{0,\delta})-\gamma_2(\wid E_\delta^{(2)}-E_{0,\delta})])\|_{H^{l+3/2}(\pa\Om)}\leq C, 
	\enn
	which implies by \eqref{4.3} that the inequality \eqref{4.2} holds when $m=l+1$. The induction arguments then completes the proof. 
\end{proof}
\begin{lemma}\label{lem4.3}
	For any fixed $x_0\in\Gamma$ and $l\in\N\cup\{0\}$, suppose $D^\alpha\mu_1=D^\alpha\mu_2$ and $D^\alpha\gamma_1=D^\alpha\gamma_2$ on $\Gamma_\varepsilon$ when $|\alpha|\leq l$. Let $q_1,q_2\in C^\infty(\ov\Om)$ be two functions satisfying $D^\alpha q_1=D^\alpha q_2$ on $\Gamma_\varepsilon$ with $|\alpha|\leq l$. It then follows that 
	\begin{align}\label{4.4}
	&\|q_1\Phi_{1,\delta}^{(1)}-q_2\Phi_{1,\delta}^{(2)}\|_{[H^{l+1}(\Om)]^3}+\|q_1\Psi_{1,\delta}^{(1)}-q_2\Psi_{1,\delta}^{(2)}\|_{[H^{l+1}(\Om)]^3}\leq C, \\ \label{4.5}
	&\|D^\alpha q_1\Phi_{1,\delta}^{(1)}-D^\alpha q_2\Phi_{1,\delta}^{(2)}\|_{[H^{l+2-|\alpha|}(\Om)]^3}+\|D^\alpha q_1\Psi_{1,\delta}^{(1)}-D^\alpha q_2\Psi_{1,\delta}^{(2)}\|_{[H^{l+2-|\alpha|}(\Om)]^3}\leq C
	\end{align}
	with $1\leq|\alpha|\leq l+2$ and the positive constant $C$ independent of $\delta\in(0,\delta_0)$. 
\end{lemma}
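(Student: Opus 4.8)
The plan is to isolate the two ``frozen'' singular fields $\Phi_{1,\delta}^{(2)}$ and $\Psi_{1,\delta}^{(2)}$, pin down their singularity at $x_0$ sharply, and then exploit that the differences of the electromagnetic coefficients, and of $q_1$ and $q_2$, vanish to high order on $\Gamma_\varepsilon$. The basic mechanism is elementary: if $b\in C^\infty(\ov\Om)$ vanishes to order $j$ on $\Gamma_\varepsilon$ (i.e.\ $D^\beta b=0$ on $\Gamma_\varepsilon$ for $|\beta|\leq j$), then $\|bH_{0,\delta}\|_{[H^{j+1}(\Om)]^3}\leq C$ and $\|bE_{0,\delta}\|_{[H^{j+2}(\Om)]^3}\leq C$ uniformly in $\delta$; this follows by combining the Taylor bound $|D^\beta b(x)|\lesssim {\rm dist}(x,\Gamma_\varepsilon)^{\,j+1-|\beta|}$ near $\Gamma_\varepsilon$ with $|D^\beta H_{0,\delta}(x)|\lesssim\delta^{3/2}|x-z_\delta|^{-3-|\beta|}$, $|D^\beta E_{0,\delta}(x)|\lesssim\delta^{3/2}|x-z_\delta|^{-2-|\beta|}$, the inequality ${\rm dist}(x,\Gamma_\varepsilon)\leq|x-z_\delta|$ near $x_0$, and ${\rm dist}(z_\delta,\ov\Om)=\delta$, followed by integration in polar coordinates about $z_\delta$ (the case $j=0$ being the one already used in the proof of Theorem \ref{thm3.2}). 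Using this I would split, for instance,
\[
q_1\Phi_{1,\delta}^{(1)}-q_2\Phi_{1,\delta}^{(2)}=q_1\big(\Phi_{1,\delta}^{(1)}-\Phi_{1,\delta}^{(2)}\big)+(q_1-q_2)\Phi_{1,\delta}^{(2)},
\]
and likewise for $\Psi$ and for the versions with $D^\alpha q_i$, treating the two groups separately.

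For the first group, since $\mu_0,\gamma_0,k_0$ are common to $i=1,2$, the definitions exhibit $\Phi_{1,\delta}^{(1)}-\Phi_{1,\delta}^{(2)}$ and $\Psi_{1,\delta}^{(1)}-\Psi_{1,\delta}^{(2)}$ as finite sums of $\curl\mathcal{G}_{k_0}$, $\na\mathcal{G}_{k_0}$ and $\na\mathcal{S}_{k_0}$ applied to $b\,H_{0,\delta}$ or $b\cdot E_{0,\delta}$, where $b$ is one of the differences $\gamma_1(\mu_1-\mu_0)-\gamma_2(\mu_2-\mu_0)$, $\na\gamma_1-\na\gamma_2$, $\mu_1(\gamma_1-\gamma_0)-\mu_2(\gamma_2-\gamma_0)$, $\na\mu_1-\na\mu_2$, each vanishing to order $l$ or $l-1$ on $\Gamma_\varepsilon$; the single-layer term $\na\mathcal{S}_{k_0}(\nu\cdot(\mu_1-\mu_2)H_{0,\delta})$ is even more favourable, because $\mu_1=\mu_2$ on $\Gamma_\varepsilon$ forces its density to vanish on a neighbourhood of $x_0$ in $\pa\Om$ while being uniformly smooth and small elsewhere, so that term is bounded in every $H^s(\Om)$. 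Hence, by the taming estimate and Theorem \ref{thm3.1}, $\|\Phi_{1,\delta}^{(1)}-\Phi_{1,\delta}^{(2)}\|_{[H^{l+2}(\Om)]^3}+\|\Psi_{1,\delta}^{(1)}-\Psi_{1,\delta}^{(2)}\|_{[H^{l+1}(\Om)]^3}\leq C$, and multiplying by the smooth functions $q_1$, resp.\ $D^\alpha q_1$, disposes of the first group in both \eqref{4.4} and \eqref{4.5}.

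The core of the argument is the second group, $(q_1-q_2)\Phi_{1,\delta}^{(2)}$, $(q_1-q_2)\Psi_{1,\delta}^{(2)}$ and $(D^\alpha q_1-D^\alpha q_2)\Phi_{1,\delta}^{(2)}$, etc. Since $q_1-q_2$ vanishes to order $l$ on $\Gamma_\varepsilon$, one has $|D^\gamma(q_1-q_2)(x)|\lesssim {\rm dist}(x,\Gamma_\varepsilon)^{\,l+1-|\gamma|}\lesssim|x-z_\delta|^{\,l+1-|\gamma|}$ near $x_0$, while away from a fixed neighbourhood of $x_0$ the fields $\Phi_{1,\delta}^{(2)},\Psi_{1,\delta}^{(2)}$ are uniformly bounded in every Sobolev norm; so everything reduces to the sharp pointwise estimates
\[
|D^s\Phi_{1,\delta}^{(2)}(x)|\lesssim\delta^{3/2}|x-z_\delta|^{-1-s},\qquad |D^s\Psi_{1,\delta}^{(2)}(x)|\lesssim\delta^{3/2}|x-z_\delta|^{-2-s}
\]
near $x_0$. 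Granting these, the relevant weighted integrals contributing to $\|(q_1-q_2)\Phi_{1,\delta}^{(2)}\|^2_{H^{l+1}}$ and $\|(D^\alpha q_1-D^\alpha q_2)\Phi_{1,\delta}^{(2)}\|^2_{H^{l+2-|\alpha|}}$ are, near $x_0$, at worst $\lesssim\delta^3\int_\delta^C\rho^{-4}\,\rho^2\,d\rho=O(\delta^2)$, while the analogous ones for $\Psi_{1,\delta}^{(2)}$, being one order more singular, are at worst $\lesssim\delta^3\int_\delta^C\rho^{-6}\,\rho^2\,d\rho=O(1)$; both are uniformly bounded in $\delta$, and this last, borderline, count is precisely what forces the loss ``$l+2-|\alpha|$'' in \eqref{4.5}. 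The one-order gap reflects that in $\Phi_{1,\delta}^{(2)}$ the potentials act on $\delta^{3/2}|\cdot-z_\delta|^{-2}$-type densities (because $\gamma_2(\mu_2-\mu_0)$ and $\mu_2(\gamma_2-\gamma_0)$ vanish at $x_0$ and $|x-z_\delta|\geq\delta$ on $\ov\Om$), whereas $\Psi_{1,\delta}^{(2)}$ carries the genuinely $\delta^{3/2}|\cdot-z_\delta|^{-3}$-singular term $\na\mathcal{G}_{k_0}(\na\mu_2\cdot H_{0,\delta})$ with $\na\mu_2(x_0)\neq0$ in general.

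To prove the pointwise estimates I would Taylor-expand the smooth coefficients $\gamma_2(\mu_2-\mu_0),\na\gamma_2,\mu_2(\gamma_2-\gamma_0),\na\mu_2$ about $z_\delta$ to a high order $N$, so that the remainders contribute $H^N(\Om)$-bounded terms by the taming estimate and each monomial term reduces $\curl\mathcal{G}_{k_0}$, $\na\mathcal{G}_{k_0}$ or $\na\mathcal{S}_{k_0}$ of $(y-z_\delta)^\beta H_{0,\delta}$ or $(y-z_\delta)^\beta E_{0,\delta}$ to iterated Newton-type potentials, typified by $\int_\Om\Phi_0(x,y)\Phi_0(y,z_\delta)\,dy=-2\pi|x-z_\delta|+(\text{smooth in }x)$ together with its differentiated and single-layer variants, whose exact homogeneity about $z_\delta$ is read off by the methods of Section \ref{sec3.2}; the crucial point is that the leading singularity of $H_{0,\delta}$ (and of $E_{0,\delta}$) is a $\curl$, hence a derivative whose angular profile about $z_\delta$ is a spherical harmonic of positive degree — the non-resonance that keeps these singularities clean and, in particular, keeps the leading part of $\Psi_{1,\delta}^{(2)}$ of order exactly $2$ with no extra logarithm (which is essential for the borderline $O(1)$ bound above). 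Carrying out this bookkeeping — tracking all powers of $|x-z_\delta|$ and of $\delta$ and keeping the singularity orders exactly $1$ for $\Phi_{1,\delta}^{(2)}$ and $2$ for $\Psi_{1,\delta}^{(2)}$ — is the main technical obstacle of the lemma, and is precisely the ``rigorous study of the associated singular integrals'' announced in the introduction; with it in hand \eqref{4.4} and \eqref{4.5} follow.
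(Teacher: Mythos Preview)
Your splitting into the ``first group'' $q_1(\Phi_{1,\delta}^{(1)}-\Phi_{1,\delta}^{(2)})$ and the ``second group'' $(q_1-q_2)\Phi_{1,\delta}^{(2)}$ is exactly what the paper does, and your treatment of the first group via the taming estimate plus Theorem~\ref{thm3.1} matches the paper's case $q_1=q_2=1$.

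For the second group, however, your route diverges sharply from the paper's, and this is where the difficulty lies. You propose to establish pointwise bounds $|D^s\Psi_{1,\delta}^{(2)}(x)|\lesssim\delta^{3/2}|x-z_\delta|^{-2-s}$ by Taylor expansion of the coefficients and explicit evaluation of iterated Newton potentials, then integrate. The paper does \emph{none} of this. Instead it observes that $(\Delta+k_0^2)\Psi_{1,\delta}^{(1)}$ equals an explicit expression built from $E_{0,\delta},H_{0,\delta}$ (since $\mathcal{G}_{k_0}$ inverts $-(\Delta+k_0^2)$ and the single-layer term is Helmholtz), applies the Leibniz rule to $(\Delta+k_0^2)[D^\beta(q_1-q_2)\Psi_{1,\delta}^{(1)}]$, and runs a \emph{downward} induction on $|\alpha|$: assuming \eqref{4.5} for $|\alpha|\in\{l_0,l_0+1\}$ (which holds trivially for $|\alpha|=m_0+2,m_0+3$ since $\Psi_{1,\delta}^{(1)}\in H^1$ uniformly), the commutator terms are controlled by that hypothesis, the forcing term by the taming estimate, the Dirichlet trace by $D^\beta(q_1-q_2)|_{\Gamma_\varepsilon}=0$, and then standard Helmholtz regularity yields \eqref{4.5} for $|\alpha|=l_0-1$. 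No pointwise analysis of $\Psi_{1,\delta}^{(2)}$ is ever performed.

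Your approach is plausible in outline but leaves the central step---the pointwise estimates---as an acknowledged ``main technical obstacle'' with only a sketch. In particular, the absence of a logarithm in the borderline term $\nabla\mathcal{G}_{k_0}(\nabla\mu_2\cdot H_{0,\delta})$ hinges on a cancellation (your ``spherical-harmonic non-resonance'') that must be verified over the half-space-like region $\Om$, not over $\R^3$; the boundary contribution from $\partial\Omega$ and the single-layer term $\nabla\mathcal{S}_{k_0}(\nu\cdot(\mu_2-\mu_0)H_{0,\delta})$ also need separate near-boundary control. None of this is insurmountable, but it is substantially more work than the paper's two-page elliptic bootstrap, and your remark that this bookkeeping ``is precisely the `rigorous study of the associated singular integrals' announced in the introduction'' misreads the paper: that phrase refers to Lemma~\ref{lem3.6} and Theorem~\ref{thm3.7} in Section~\ref{sec3.2}, not to the present lemma, whose proof is purely by Sobolev regularity.
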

\begin{proof}
	We first consider the case that $q_1=q_2=1$. By the definition of $\Phi_{1,\delta}^{(i)}$ and $\Psi_{1,\delta}^{(i)}$, we see that 
	\begin{align*}
	  \Phi_{1,\delta}^{(1)}-\Phi_{1,\delta}^{(2)}=&\;i\om\curl\mathcal{G}_{k_0}([\gamma_1(\mu_1-\mu_0)-\gamma_2(\mu_2-\mu_0)]H_{0,\delta}) \\
	  &\;+\na\mathcal{G}_{k_0}(\na(\gamma_1-\gamma_2)\cdot E_{0,\delta}), \\
	  \Psi_{1,\delta}^{(1)}-\Psi_{1,\delta}^{(2)}=&\;i\om\curl\mathcal{G}_{k_0}([\mu_1(\gamma_1-\gamma_0)-\mu_2(\gamma_2-\gamma_0)]E_{0,\delta}) \\
	  &\;+\na\mathcal{G}_{k_0}(\na(\mu_1-\mu_2)\cdot H_{0,\delta}). 
	\end{align*}
	Since $D^\alpha\mu_1=D^\alpha\mu_2$ and $D^\alpha\gamma_1=D^\alpha\gamma_2$ on $\Gamma_\varepsilon$ with $|\alpha|\leq l$, we know that $[\gamma_1(\mu_1-\mu_0)-\gamma_2(\mu_2-\mu_0)]H_{0,\delta}$, $\na(\gamma_1-\gamma_2)\cdot E_{0,\delta}$, $[\mu_1(\gamma_1-\gamma_0)-\mu_2(\gamma_2-\gamma_0)]E_{0,\delta}$ and $\na(\mu_1-\mu_2)\cdot H_{0,\delta}$ are uniformly bounded in $[H^l(\Om)]^3$ for $\delta$. Then \eqref{4.4} follows from Theorem \ref{thm2.1}. 
	
	Now we prove for general $q_1,q_2$. For simplicity, we only show the estimates about $\Psi_{1,\delta}^{(i)}$. From Remark \ref{re3.3}, we know that $\Psi_{1,\delta}^{(i)}$, $i=1,2$, are uniformly bounded in $[H^1(\Om)]^3$ for $\delta$. Hence \eqref{4.4} and \eqref{4.5} hold when $l=0$. Suppose \eqref{4.4} and \eqref{4.5} hold when $l\leq m_0$ with $m_0\geq0$. We shall prove these estimates for $l=m_0+1$. First, it is easily seen that 
	\ben
	  D^\alpha q_1\Psi_{1,\delta}^{(1)}-D^\alpha q_2\Psi_{1,\delta}^{(2)}=D^\alpha(q_1-q_2)\Psi_{1,\delta}^{(1)}+D^\alpha q_2(\Psi_{1,\delta}^{(1)}-\Psi_{1,\delta}^{(2)}). 
	\enn
	Therefore, it suffices to prove that 
	\begin{align}\label{4.6}
	  &\|(q_1-q_2)\Psi_{1,\delta}^{(1)}\|_{[H^{m_0+1}(\Om)]^3}\leq C, \\ \label{4.7}
	  &\|D^\alpha(q_1-q_2)\Psi_{1,\delta}^{(1)}\|_{[H^{m_0+3-|\alpha|}(\Om)]^3}\leq C
	\end{align}
	for $1\leq|\alpha|\leq m_0+3$. Since $\|\Psi_{l,\delta}^{(1)}\|_{[H^1(\Om)]^3}\leq C$, it follows that \eqref{4.7} holds when $|\alpha|=m_0+2,m_0+3$. Next we claim that if \eqref{4.7} holds for $l_0\leq|\alpha|\leq m_0+3$ with $2\leq l_0\leq m_0+2$, then it also holds for $|\alpha|=l_0-1$, which implies \eqref{4.7} is satisfied for $1\leq|\alpha|\leq m_0+3$. For $|\beta|=l_0-1\leq m_0+1$, since $D^\beta q_1=D^\beta q_2$ on $\Gamma_\varepsilon$, we have $\|D^\beta(q_1-q_2)\Psi_{1,\delta}^{(1)}\|_{[H^{m_0-l_0+7/2}(\pa\Om)]^3}\leq C$. Moreover, it is derived that 
	\begin{align*}
	  &\;(\Delta+k_0^2)[D^\beta(q_1-q_2)\Psi_{1,\delta}^{(1)}] \\
	  =&\;\Delta D^\beta(q_1-q_2)\Psi_{1,\delta}^{(1)}+2\na D^\beta(q_1-q_2)\cdot\na\Psi_{1,\delta}^{(1)} \\
	  &\;-D^\beta(q_1-q_2)(-i\om\curl(\mu_1(\gamma_1-\gamma_0)E_{0,\delta})+\na(\na\mu_1\cdot H_{0,\delta}))
	\end{align*}
	in $\Om$. Then \eqref{4.7} with $|\alpha|=l_0+1$ gives $\|\Delta D^\beta(q_1-q_2)\Psi_{1,\delta}^{(1)}\|_{[H^{m_0+2-l_0}(\Om)]^3}\leq C$. From the observation  
	\ben
	\pa_iD^\beta(q_1-q_2)\pa_i\Psi_{1,\delta}^{(1)}=\pa_i(\pa_iD^\beta(q_1-q_2)\Psi_{1,\delta}^{(1)})-\pa_i^2D^\beta(q_1-q_2)\Psi_{1,\delta}^{(1)}
	\enn
	with $i=1,2,3$ and the estimates \eqref{4.7} with $|\alpha|=l_0,l_0+1$, it further follows that $\|\na D^\beta(q_1-q_2)\cdot\na\Psi_{1,\delta}^{(1)}\|_{H^{m_0+2-l_0}(\Om)}\leq C$. Furthermore, since $D^\alpha[D^\beta(q_1-q_2)]=0$ on $\Gamma_\varepsilon$ with $|\alpha|\leq m_0+2-l_0$, we obtain that $D^\beta(q_1-q_2)(-i\om\curl(\mu_1(\gamma_1-\gamma_0)E_{0,\delta})+\na(\na\mu_1\cdot H_{0,\delta}))$ are uniformly bounded in $[H^{m_0+2-l_0}(\Om)]^3$ for $\delta$. Therefore, the regularity of Helmholtz equation confirms the claim. Finally, similarily as above, we can deduce estimate \eqref{4.6}. The proof is thus complete. 
\end{proof}
\begin{lemma}\label{lem4.4}
	Under the conditions in Lemma {\rm \ref{lem4.3}}, we further have 
	\begin{align*}
	  &\|q_1\Phi_{l,\delta}^{(1)}-q_2\Phi_{l,\delta}^{(2)}\|_{[H^{l+1}(\Om)]^3}+\|q_1\Psi_{l,\delta}^{(1)}-q_2\Psi_{l,\delta}^{(2)}\|_{[H^{l+1}(\Om)]^3}\leq C, \\ 
	  &\|D^\alpha q_1\Phi_{l,\delta}^{(1)}-D^\alpha q_2\Phi_{l,\delta}^{(2)}\|_{[H^{l+2-|\alpha|}(\Om)]^3}+\|D^\alpha q_1\Psi_{l,\delta}^{(1)}-D^\alpha q_2\Psi_{l,\delta}^{(2)}\|_{[H^{l+2-|\alpha|}(\Om)]^3}\leq C
	\end{align*}
	with $1\leq|\alpha|\leq l+2$, where the constant $C>0$ is independent of $\delta\in(0,\delta_0)$. 
\end{lemma}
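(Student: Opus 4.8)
The plan is to argue by strong induction on $l$, establishing the bounds for $\Phi_{l,\delta}^{(i)}$ and $\Psi_{l,\delta}^{(i)}$ simultaneously (the recursion of Theorem \ref{thm3.2} couples the two, and expresses level $l$ in terms of levels $l-1$ and $l-2$). The cases $l\le 0$ are trivial since $\Phi_{m,\delta}=\Psi_{m,\delta}=0$ for $m\le 0$, and the case $l=1$ is exactly Lemma \ref{lem4.3}. So fix $l\ge 2$ and assume the conclusion of Lemma \ref{lem4.4} at all levels $\le l-1$.

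\emph{Step 1 (the unweighted difference).} Substitute the recursion of Theorem \ref{thm3.2} (with $m+1=l$) into $\Phi_{l,\delta}^{(1)}-\Phi_{l,\delta}^{(2)}$ and $\Psi_{l,\delta}^{(1)}-\Psi_{l,\delta}^{(2)}$. Each resulting term is $\curl\mathcal{G}_{k_0}$, $\mathcal{G}_{k_0}$, $\na\mathcal{S}_{k_0}$ or $\curl\mathcal{S}_{k_0}$ applied to an expression such as $\na\log\gamma_1\times\Phi_{l-1,\delta}^{(1)}-\na\log\gamma_2\times\Phi_{l-1,\delta}^{(2)}$, $\gamma_1\Psi_{l-1,\delta}^{(1)}-\gamma_2\Psi_{l-1,\delta}^{(2)}$, $\Phi_{l-2,\delta}^{(1)}-\Phi_{l-2,\delta}^{(2)}$ or $\varphi_{l,\delta}^{(1)}-\varphi_{l,\delta}^{(2)}$, plus the base terms $\Phi_{1,\delta}^{(1)}-\Phi_{1,\delta}^{(2)}$ and $\Psi_{1,\delta}^{(1)}-\Psi_{1,\delta}^{(2)}$. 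I would split each product as (smooth coefficient)$\times$(difference of the level $l-1$ or $l-2$ functions) plus (difference of coefficients)$\times$(level $l-1$ function of the second family). For the first piece the inductive hypothesis with $q_1=q_2=1$ (at levels $l-1$ and $l-2$) gives a uniform $[H^{l}(\Om)]^3$ bound. For the second piece one first checks, by the Leibniz and quotient rules together with $\mu_i,|\gamma_i|>0$, that $\gamma_1-\gamma_2$, $\na\log\gamma_1-\na\log\gamma_2$, $\na\log\mu_1-\na\log\mu_2$, $\mu_1(\gamma_1-\gamma_0)-\mu_2(\gamma_2-\gamma_0)$ and the like vanish on $\Gamma_\varepsilon$ to order at least $l-1$; then this piece falls under the weighted inductive hypothesis at level $l-1$ (with $q_1=0$ and $q_2$ the vanishing coefficient). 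Combining these with Lemma \ref{lem4.2} for the $\varphi,\psi$ differences, Lemma \ref{lem4.3} for the base terms, and the smoothing $\mathcal{G}_{k_0}:H^s(\Om)\to H^{s+2}(\Om)$, $\mathcal{S}_{k_0}:H^{s-1/2}(\pa\Om)\to H^{s+1}(\Om)$ of Theorem \ref{thm3.1}, gives $\|\Phi_{l,\delta}^{(1)}-\Phi_{l,\delta}^{(2)}\|_{[H^{l+1}(\Om)]^3}+\|\Psi_{l,\delta}^{(1)}-\Psi_{l,\delta}^{(2)}\|_{[H^{l+1}(\Om)]^3}\le C$ uniformly in $\delta$.

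\emph{Step 2 (the weighted and differentiated estimates).} Exactly as in the proof of Lemma \ref{lem4.3}, write
\[
D^\alpha q_1\,\Phi_{l,\delta}^{(1)}-D^\alpha q_2\,\Phi_{l,\delta}^{(2)}=D^\alpha(q_1-q_2)\,\Phi_{l,\delta}^{(1)}+D^\alpha q_2\,\big(\Phi_{l,\delta}^{(1)}-\Phi_{l,\delta}^{(2)}\big),
\]
and likewise with $\Psi$. The last term lies in $[H^{l+1}(\Om)]^3\subset[H^{l+2-|\alpha|}(\Om)]^3$ by Step 1. For the first term and for the plain weighted quantity $(q_1-q_2)\Phi_{l,\delta}^{(1)}$, I would run a downward induction on the number of derivatives $|\beta|$ from $l+2$ to $0$: for $|\beta|\ge l+1$ the bound follows from $\|\Phi_{l,\delta}^{(1)}\|_{[H^1(\Om)]^3}\le C$ (Remark \ref{re3.3}); for $0\le|\beta|\le l$, use that $D^\beta(q_1-q_2)$ vanishes on $\Gamma_\varepsilon$ to order $l-|\beta|$, compute $(\Delta+k_0^2)\big[D^\beta(q_1-q_2)\,\Phi_{l,\delta}^{(1)}\big]$ by the product rule, and conclude by the regularity of the Helmholtz equation. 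The source of $\Phi_{l,\delta}^{(1)}$ appearing here is read off from the recursion of Theorem \ref{thm3.2}, and is bounded in the relevant space because, away from $x_0$, the $\Phi_{m,\delta},\Psi_{m,\delta}$ are uniformly bounded in every Sobolev norm, while near $x_0$ the high-order vanishing of $D^\beta(q_1-q_2)$ compensates; the remaining cross terms are absorbed by the downward-induction hypothesis and Step 1. This is precisely the mechanism of Lemma \ref{lem4.3}, now supplied with the outputs of Step 1 and of levels $\le l-1$.

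The main obstacle is not any single estimate but the bookkeeping: whenever a coefficient or a weight is split off, one must verify that the order to which it vanishes on $\Gamma_\varepsilon$ is at least the number of derivatives still to be absorbed, so that every term lands in the claimed Sobolev space and the two nested inductions (on $l$, and downward on $|\beta|$) close. What makes this go through is (i) the Leibniz/quotient computation pinning down the vanishing orders of the coefficient differences, and (ii) the observation — already used in Lemmas \ref{lem4.2} and \ref{lem4.3} — that $\Phi_{m,\delta}$ and $\Psi_{m,\delta}$ are only mildly singular, being uniformly bounded in $[H^1(\Om)]^3$ and, on $\ov\Om\setminus B_\varepsilon(x_0)$, in every Sobolev norm, so that multiplying by a function vanishing to high order at $x_0$ — together with the homogeneous Helmholtz equation they satisfy off the source — restores full regularity.
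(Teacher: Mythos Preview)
Your proposal is correct and is exactly the approach the paper (which omits the details) intends: strong induction on $l$, the unweighted difference in Step~1 handled via the recursion of Theorem~\ref{thm3.2} together with Lemma~\ref{lem4.2}, Lemma~\ref{lem4.3}, and the inductive hypothesis at levels $l-1,l-2$, and Step~2 by the same Helmholtz-regularity downward induction as in Lemma~\ref{lem4.3}. One small sharpening: for the source term $D^\beta(q_1-q_2)(\Delta+k_0^2)\Phi_{l,\delta}^{(1)}$ in Step~2, rather than the away/near-$x_0$ heuristic, apply the weighted inductive hypothesis at levels $l-1$ and $l-2$ directly with $q_1'=q_1-q_2$, $q_2'=0$ (second-line estimate, then multiply by the smooth coefficients $\gamma_1,\nabla\log\gamma_1,\ldots$), which gives exactly the required $H^{l-|\beta|}$ bound.
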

\begin{proof}
	The assertion is trivial when $l=0$, while the case $l=1$ is proved in Lemma \ref{lem4.3}. Combining Lemmas \ref{lem4.2} and \ref{lem4.3}, the left part of the estimates can be verified by the induction arguments analog to the proof of Lemma \ref{lem4.3}. We therefore omit the detailed proof. 
\end{proof}

As for the functions $\varphi_{m,\delta}^{(i)\prime},\psi_{m,\delta}^{(i)\prime},\Phi_{m,\delta}^{(i)\prime}$ and $\Psi_{m,\delta}^{(i)\prime}$ in Theorem \ref{thm3.4}, we have the same conslusions. The proofs follow a completely similar manner and are thus omitted. 
\begin{lemma}\label{lem4.5}
	For any fixed $x_0\in\Gamma$ and $m\in\N$, the following estimates 
	\begin{align*}
	&\|\varphi_{m,\delta}^{(1)\prime}-\varphi_{m,\delta}^{(2)\prime}\|_{H^{m+1/2}(\pa\Om)}+\|\nu\times(\Phi_{m,\delta}^{(1)\prime}-\Phi_{m,\delta}^{(2)\prime})\|_{H^{m+1/2}_t(\pa\Om)}\leq C, \\
	&\|\psi_{m,\delta}^{(1)\prime}-\psi_{m,\delta}^{(2)\prime}\|_{H^{m+1/2}_t(\pa\Om)}+\|\nu\cdot(\Psi_{m,\delta}^{(1)\prime}-\Psi_{m,\delta}^{(2)\prime})\|_{H^{m+1/2}(\pa\Om)}\leq C 
	\end{align*}
	hold uniformly for $\delta\in(0,\delta_0)$. 
\end{lemma}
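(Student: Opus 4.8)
The plan is to mirror exactly the argument used for Lemma \ref{lem4.2}, with the roles of $E_{0,\delta}$ and $H_{0,\delta}$ interchanged and the primed quantities substituted throughout. First I would invoke Remark \ref{re3.5}: since $\mu_1=\mu_2$ on $\Gamma$ (by Lemma \ref{lem4.1}), the estimates $\|\nu\times\Phi_{m,\delta}^{(i)\prime}\|_{H^{m+1/2}_t(\pa\Om)}\leq C$ and $\|\nu\cdot\Psi_{m,\delta}^{(i)\prime}+\nu\cdot(\mu-\mu_0)H_{0,\delta}'\|_{H^{m+1/2}(\pa\Om)}\leq C$ hold uniformly in $\delta$, and since $(\mu_1-\mu_0)H_{0,\delta}'=(\mu_2-\mu_0)H_{0,\delta}'$ up to a uniformly bounded remainder, the terms $\|\nu\times(\Phi_{m,\delta}^{(1)\prime}-\Phi_{m,\delta}^{(2)\prime})\|_{H^{m+1/2}_t(\pa\Om)}$ and $\|\nu\cdot(\Psi_{m,\delta}^{(1)\prime}-\Psi_{m,\delta}^{(2)\prime})\|_{H^{m+1/2}(\pa\Om)}$ are immediately controlled.

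Next I would handle the $\psi$-difference. Choosing $\varepsilon>0$ with $\pa\Om\cap B_{2\varepsilon}(x_0)\subset\Gamma$ and $B_{2\varepsilon}(x_0)\cap(D_1\cup D_2)=\emptyset$, and the associated cut-off $\eta$, I define $(u_\delta^{(i)\prime},v_\delta^{(i)\prime})$ as the solution to \eqref{2.1} with boundary data $\nu\times\eta E_{0,\delta}'$; by the local agreement $\Lambda_1=\Lambda_2$ on $\Gamma$ we get $\nu\times v_\delta^{(1)\prime}=\nu\times v_\delta^{(2)\prime}$ on $\Gamma$, and the regularity of Maxwell's equations gives $\|u_\delta^{(i)\prime}-E_\delta^{(i)\prime}\|_{H^m(\Om\setminus\ov D_i)}+\|v_\delta^{(i)\prime}-H_\delta^{(i)\prime}\|_{H^m(\Om\setminus\ov D_i)}\leq C$ uniformly in $\delta$ for any fixed $m$. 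Combined with Theorem \ref{thm3.4} and Remark \ref{re3.5}, this yields $\|\psi_{m,\delta}^{(1)\prime}-\psi_{m,\delta}^{(2)\prime}\|_{H^{m+1/2}_t(\pa\Om)}\leq C$.

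Finally, the $\varphi$-difference is treated by the same induction as in Lemma \ref{lem4.2}. Writing $(\wid E_\delta^{(i)\prime},\wid H_\delta^{(i)\prime})$ for the interior solutions from the (omitted) proof of Theorem \ref{thm3.4}, I reduce via the analogue of \eqref{3.3} to proving $\|\nu\cdot[\gamma_1(\wid E_\delta^{(1)\prime}-E_{0,\delta}')-\gamma_2(\wid E_\delta^{(2)\prime}-E_{0,\delta}')]\|_{H^{m+1/2}(\pa\Om)}\leq C$, which holds for $m=0$. Using the divergence identity for $\dive_x(\Phi_{k_0}(x,y)\nu(y)\times(\wid H_\delta^{(i)\prime}(y)-H_{0,\delta}'(y)))$ together with the second Maxwell equation, I express $\mathcal{S}_{k_0}(\nu\cdot\gamma_i(\wid E_\delta^{(i)\prime}-E_{0,\delta}'))$ in terms of $\dive\mathcal{S}_{k_0}(\nu\times(\wid H_\delta^{(i)\prime}-H_{0,\delta}'))$ and a manifestly regular term; since $\nu\times v_\delta^{(1)\prime}=\nu\times v_\delta^{(2)\prime}$ on $\Gamma$ forces $\|\nu\times[(\wid H_\delta^{(1)\prime}-H_{0,\delta}')-(\wid H_\delta^{(2)\prime}-H_{0,\delta}')]\|_{H^{m+1/2}_t(\pa\Om)}\leq C$, taking the normal derivative on $\pa\Om$ gives $\|(I+2K_{k_0}')(\nu\cdot[\gamma_1(\wid E_\delta^{(1)\prime}-E_{0,\delta}')-\gamma_2(\wid E_\delta^{(2)\prime}-E_{0,\delta}')])\|_{H^{m+1/2}(\pa\Om)}\leq C$. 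The smoothing property of $K_{k_0}'$ from Theorem \ref{thm3.1} then propagates the bound from order $l$ to order $l+1$, closing the induction. The only point requiring care is verifying that the primed analogues of the identities \eqref{3.3}-\eqref{3.14} do hold — but this is precisely what Theorem \ref{thm3.4} asserts (its proof being "completely similar"), so no genuinely new obstacle arises; the argument is a faithful transcription of Lemma \ref{lem4.2}.
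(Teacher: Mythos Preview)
Your proposal is correct and follows essentially the same approach as the paper, which explicitly states that the proof of Lemma~\ref{lem4.5} ``follow[s] a completely similar manner'' to Lemma~\ref{lem4.2} and is therefore omitted. Your step-by-step transcription---using Remark~\ref{re3.5} for the $\nu\times\Phi'$ and $\nu\cdot\Psi'$ terms, the cut-off argument with $\Lambda_1=\Lambda_2$ on $\Gamma$ for the $\psi'$-difference, and the divergence identity plus $K_{k_0}'$-smoothing induction for the $\varphi'$-difference---is exactly what the authors intend.
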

\begin{lemma}\label{lem4.6}
	Under the conditions in Lemma {\rm \ref{lem4.3}}, we have 
	\begin{align*}
	&\|q_1\Phi_{1,\delta}^{(1)\prime}-q_2\Phi_{1,\delta}^{(2)\prime}\|_{[H^{l+1}(\Om)]^3}+\|q_1\Psi_{1,\delta}^{(1)\prime}-q_2\Psi_{1,\delta}^{(2)\prime}\|_{[H^{l+1}(\Om)]^3}\leq C, \\ 
	&\|D^\alpha q_1\Phi_{1,\delta}^{(1)\prime}-D^\alpha q_2\Phi_{1,\delta}^{(2)\prime}\|_{[H^{l+2-|\alpha|}(\Om)]^3}+\|D^\alpha q_1\Psi_{1,\delta}^{(1)\prime}-D^\alpha q_2\Psi_{1,\delta}^{(2)\prime}\|_{[H^{l+2-|\alpha|}(\Om)]^3}\leq C 
	\end{align*}
	with $1\leq|\alpha|\leq l+2$ and the positive constant $C$ independent of $\delta\in(0,\delta_0)$. 
\end{lemma}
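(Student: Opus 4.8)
The plan is to replay the proof of Lemma~\ref{lem4.3} almost verbatim, the only change being that the functions $\Phi_{1,\delta}^{(i)\prime},\Psi_{1,\delta}^{(i)\prime}$ of Theorem~\ref{thm3.4} are built by formulas of exactly the same shape as $\Phi_{1,\delta}^{(i)},\Psi_{1,\delta}^{(i)}$, with $H_{0,\delta},E_{0,\delta}$ replaced throughout by $H_{0,\delta}',E_{0,\delta}'$. The two facts that drove the original argument remain available: the smoothing property $\mathcal{G}_{k_0}\colon H^s(\Om)\to H^{s+2}(\Om)$ and the boundedness of $\mathcal{S}_{k_0}$ from Theorem~\ref{thm3.1}, and the uniform bounds $\|\Phi_{1,\delta}^{(i)\prime}\|_{[H^1(\Om)]^3}+\|\Psi_{1,\delta}^{(i)\prime}\|_{[H^1(\Om)]^3}\le C$ supplied by Remark~\ref{re3.5}. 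Moreover $H_{0,\delta}'$ and $E_{0,\delta}'$ carry the same quantitative singular profile near $z_\delta$ as $E_{0,\delta}$ and $H_{0,\delta}$ do (indeed $H_{0,\delta}'$ is literally $E_{0,\delta}$ and $E_{0,\delta}'$ is a constant multiple of $H_{0,\delta}$), so the cut-off and vanishing-parameter computations transfer without alteration.

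First I would dispose of the case $q_1=q_2=1$. Expanding $\Phi_{1,\delta}^{(1)\prime}-\Phi_{1,\delta}^{(2)\prime}$ and $\Psi_{1,\delta}^{(1)\prime}-\Psi_{1,\delta}^{(2)\prime}$ from the definitions in Theorem~\ref{thm3.4}, these differences are $i\om\curl\mathcal{G}_{k_0}$ and $\na\mathcal{G}_{k_0}$ applied to terms of the form $[\gamma_1(\mu_1-\mu_0)-\gamma_2(\mu_2-\mu_0)]H_{0,\delta}'$, $\na(\gamma_1-\gamma_2)\cdot E_{0,\delta}'$, $[\mu_1(\gamma_1-\gamma_0)-\mu_2(\gamma_2-\gamma_0)]E_{0,\delta}'$ and $\na(\mu_1-\mu_2)\cdot H_{0,\delta}'$, together with, in the case of $\Psi$, an additional $\na\mathcal{S}_{k_0}$ of a boundary term that is already bounded. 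Since $D^\alpha\mu_1=D^\alpha\mu_2$ and $D^\alpha\gamma_1=D^\alpha\gamma_2$ on $\Gamma_\varepsilon$ for $|\alpha|\le l$, every one of these coefficient differences vanishes to order $l$ at $x_0$, so, combined with the explicit singularity of $H_{0,\delta}',E_{0,\delta}'$ (the primed counterpart of $\|(\mu-\mu_0)H_{0,\delta}\|_{[H^1(\Om)]^3}\le C$ from Theorem~\ref{thm3.2}), each product is uniformly bounded in $[H^l(\Om)]^3$, and Theorem~\ref{thm3.1} then yields the first inequality with $q_1=q_2=1$.

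For general $q_1,q_2$ I would induct on $l$. Writing
\ben
D^\alpha q_1\,\Psi_{1,\delta}^{(1)\prime}-D^\alpha q_2\,\Psi_{1,\delta}^{(2)\prime}=D^\alpha(q_1-q_2)\,\Psi_{1,\delta}^{(1)\prime}+D^\alpha q_2\,\bigl(\Psi_{1,\delta}^{(1)\prime}-\Psi_{1,\delta}^{(2)\prime}\bigr),
\enn
the second summand is controlled by the case just treated together with the inductive hypothesis applied to $q_2$, so the task reduces to bounding $D^\alpha(q_1-q_2)\Psi_{1,\delta}^{(1)\prime}$ (and the analogous quantity with $\Phi$) in the appropriate Sobolev norm uniformly in $\delta$. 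I would obtain this by a downward induction on $|\alpha|$: the top orders are immediate from the $H^1$ bound of Remark~\ref{re3.5}, and to descend one order one applies $(\Delta+k_0^2)$ to $D^\beta(q_1-q_2)\Psi_{1,\delta}^{(1)\prime}$, using that $\Psi_{1,\delta}^{(1)\prime}$ itself solves a Helmholtz equation whose right-hand side is built from $\curl(\mu_1(\gamma_1-\gamma_0)E_{0,\delta}')$ and $\na(\na\mu_1\cdot H_{0,\delta}')$ (cf. the corresponding computation in Lemma~\ref{lem4.3}). The Leibniz expansion then splits into $\Delta D^\beta(q_1-q_2)\,\Psi_{1,\delta}^{(1)\prime}$, the mixed term $\na D^\beta(q_1-q_2)\cdot\na\Psi_{1,\delta}^{(1)\prime}$, and $D^\beta(q_1-q_2)$ times that right-hand side; the first two are covered by the inductive hypothesis at one higher order, while the last is uniformly bounded because sufficiently many derivatives of $D^\beta(q_1-q_2)$ vanish on $\Gamma_\varepsilon$. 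Elliptic regularity for $\Delta+k_0^2$, together with the boundary trace bound for $D^\beta(q_1-q_2)\Psi_{1,\delta}^{(1)\prime}$ coming from $D^\beta q_1=D^\beta q_2$ on $\Gamma_\varepsilon$, then closes the induction, and the statements for $\Phi_{1,\delta}^{(i)\prime}$ follow identically.

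The one place where ``completely similar'' genuinely has to be verified is the transfer of the singularity estimates underlying the $[H^l(\Om)]^3$ bounds: one must confirm that $H_{0,\delta}'=\delta^{3/2}\curl(\nu(x_0)\Phi_{k_0}(\cdot,z_\delta))$ and $E_{0,\delta}'=-(1/i\om\gamma_0)\curl H_{0,\delta}'$ blow up near $z_\delta$ at precisely the rate of $E_{0,\delta}$ and $H_{0,\delta}$, so that multiplication by a function vanishing to order $l$ at $x_0$ produces a family uniformly bounded in $[H^l(\Om)]^3$. This is immediate from the explicit formulas, and with it in hand the rest is the bookkeeping of Lemma~\ref{lem4.3}; I anticipate no new obstacle.
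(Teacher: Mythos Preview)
Your proposal is correct and is precisely the argument the paper has in mind: the paper itself omits the proof of Lemma~\ref{lem4.6}, stating only that it ``follows a completely similar manner'' to Lemma~\ref{lem4.3}, and you have accurately reproduced that similar manner, including the key observation that $H_{0,\delta}'=E_{0,\delta}$ and $E_{0,\delta}'=-(\mu_0/\gamma_0)H_{0,\delta}$, so the singularity bounds transfer verbatim.
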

\begin{lemma}\label{lem4.7}
	Under the conditions in Lemma {\rm \ref{lem4.3}}, we further have 
	\begin{align*}
	&\|q_1\Phi_{l,\delta}^{(1)\prime}-q_2\Phi_{l,\delta}^{(2)\prime}\|_{[H^{l+1}(\Om)]^3}+\|q_1\Psi_{l,\delta}^{(1)\prime}-q_2\Psi_{l,\delta}^{(2)\prime}\|_{[H^{l+1}(\Om)]^3}\leq C, \\ 
	&\|D^\alpha q_1\Phi_{l,\delta}^{(1)\prime}-D^\alpha q_2\Phi_{l,\delta}^{(2)\prime}\|_{[H^{l+2-|\alpha|}(\Om)]^3}+\|D^\alpha q_1\Psi_{l,\delta}^{(1)\prime}-D^\alpha q_2\Psi_{l,\delta}^{(2)\prime}\|_{[H^{l+2-|\alpha|}(\Om)]^3}\leq C 
	\end{align*}
	with $1\leq|\alpha|\leq l+2$, where the constant $C>0$ is independent of $\delta\in(0,\delta_0)$. 
\end{lemma}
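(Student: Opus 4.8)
The plan is to follow the proof of Lemma~\ref{lem4.4} essentially line by line, replacing everywhere $(E_{0,\delta},H_{0,\delta})$ and the unprimed functions $\Phi_{m,\delta}^{(i)},\Psi_{m,\delta}^{(i)},\varphi_{m,\delta}^{(i)},\psi_{m,\delta}^{(i)}$ by $(E_{0,\delta}',H_{0,\delta}')$ and the primed functions $\Phi_{m,\delta}^{(i)\prime},\Psi_{m,\delta}^{(i)\prime},\varphi_{m,\delta}^{(i)\prime},\psi_{m,\delta}^{(i)\prime}$, and with Lemmas~\ref{lem4.5} and~\ref{lem4.6} playing the roles of Lemmas~\ref{lem4.2} and~\ref{lem4.3}. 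What legitimizes this is that the recursion of Theorem~\ref{thm3.4} defining the primed quantities has exactly the same algebraic form as the recursion of Theorem~\ref{thm3.2}, and that none of the estimates below uses anything about $(E_{0,\delta}',H_{0,\delta}')$ beyond the crude uniform bounds recorded in Remark~\ref{re3.5}; since moreover $\mu_0,\gamma_0,k_0$ are common to both configurations by Lemma~\ref{lem4.1}, the passage is purely notational.

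First I would dispose of the base cases: for $l=0$ the two displays are immediate from Remark~\ref{re3.5}, since $\Phi_{0,\delta}^{(i)\prime}=\Psi_{0,\delta}^{(i)\prime}=0$ and, more generally, $\Phi_{m,\delta}^{(i)\prime},\Psi_{m,\delta}^{(i)\prime}$ are uniformly bounded in $[H^1(\Om)]^3$; for $l=1$ they are precisely Lemma~\ref{lem4.6}. Then I run an induction on $l$. Granting the two displays for all indices $\le l$, I insert the recursion
\begin{align*}
\Phi_{l+1,\delta}^{(i)\prime}=&\;\curl\mathcal{G}_{k_0}(\na\log\gamma_i\times\Phi_{l,\delta}^{(i)\prime})+i\om\curl\mathcal{G}_{k_0}(\gamma_i\Psi_{l,\delta}^{(i)\prime}) \\
&\;-k_0^2\mathcal{G}_{k_0}(\Phi_{l-1,\delta}^{(i)\prime})+\na\mathcal{S}_{k_0}\varphi_{l+1,\delta}^{(i)\prime}+\Phi_{1,\delta}^{(i)\prime}
\end{align*}
and the companion identity for $\Psi_{l+1,\delta}^{(i)\prime}$ into $q_1\Phi_{l+1,\delta}^{(1)\prime}-q_2\Phi_{l+1,\delta}^{(2)\prime}$. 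Adding and subtracting the appropriate mixed products, each resulting term becomes either (i) a volume potential $\mathcal{G}_{k_0}$ or a single-layer potential $\mathcal{S}_{k_0}$ applied to a difference of the shape $q\,(\text{level-}l)^{(1)}-q\,(\text{level-}l)^{(2)}$, which is controlled by the induction hypothesis together with the assumptions $D^\alpha\mu_1=D^\alpha\mu_2$ and $D^\alpha\gamma_1=D^\alpha\gamma_2$ on $\Gamma_\varepsilon$ for $|\alpha|\le l$; or (ii) $\na\mathcal{S}_{k_0}(\varphi_{l+1,\delta}^{(1)\prime}-\varphi_{l+1,\delta}^{(2)\prime})$, estimated by Lemma~\ref{lem4.5}; or (iii) the $l=1$ pieces $\Phi_{1,\delta}^{(i)\prime}$, handled by Lemma~\ref{lem4.6}. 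The mapping properties of $\mathcal{G}_{k_0},\mathcal{S}_{k_0}$ in Theorem~\ref{thm3.1} then yield the first display at level $l+1$; the argument for $\Psi_{l+1,\delta}^{(i)\prime}$ is identical, using $\curl\mathcal{S}_{k_0}(\psi_{l+1,\delta}^{(1)\prime}-\psi_{l+1,\delta}^{(2)\prime})$ and the trace estimates of Lemma~\ref{lem4.5}.

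For the weighted-derivative estimates I would run, inside this $l$-induction, a finite descending induction on $|\alpha|$ exactly as in the proof of Lemma~\ref{lem4.3}. Writing $D^\alpha q_1\Phi_{l,\delta}^{(1)\prime}-D^\alpha q_2\Phi_{l,\delta}^{(2)\prime}=D^\alpha(q_1-q_2)\Phi_{l,\delta}^{(1)\prime}+D^\alpha q_2(\Phi_{l,\delta}^{(1)\prime}-\Phi_{l,\delta}^{(2)\prime})$, the last term is covered by the first display just obtained; for the first term, since $D^\beta(q_1-q_2)=0$ on $\Gamma_\varepsilon$ whenever $|\beta|\le l$ one gets the boundary trace bound, while $(\Delta+k_0^2)[D^\beta(q_1-q_2)\Phi_{l,\delta}^{(1)\prime}]$ splits into $\Delta D^\beta(q_1-q_2)\,\Phi_{l,\delta}^{(1)\prime}$, a commutator $2\na D^\beta(q_1-q_2)\cdot\na\Phi_{l,\delta}^{(1)\prime}$ rewritten via $\pa_iD^\beta(\cdot)\pa_i\Phi=\pa_i(\pa_iD^\beta(\cdot)\Phi)-\pa_i^2D^\beta(\cdot)\Phi$ so as to stay among already-estimated quantities, and a source term from $(\Delta+k_0^2)\Phi_{l,\delta}^{(1)\prime}$ that is a combination of smooth coefficients times level-$l$ and level-$(l-1)$ quantities. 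Elliptic regularity for $\Delta+k_0^2$ then lets one pass from $|\alpha|$ to $|\alpha|-1$; starting the descending induction at $|\alpha|=l+2$, where only the $[H^1(\Om)]^3$ bound of Remark~\ref{re3.5} is needed, one reaches $|\alpha|=1$, which gives the second display.

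The main obstacle is purely bookkeeping: one must arrange the two nested inductions so that at each stage the Sobolev order requested of Lemma~\ref{lem4.5} (for $\varphi_{m,\delta}^{(1)\prime}-\varphi_{m,\delta}^{(2)\prime}$, $\psi_{m,\delta}^{(1)\prime}-\psi_{m,\delta}^{(2)\prime}$ and the traces of $\Phi_{m,\delta}^{(i)\prime},\Psi_{m,\delta}^{(i)\prime}$) and of Lemma~\ref{lem4.6} (for the level-$1$ terms) is exactly the one those lemmas deliver, and one must check that the two-step-back term $k_0^2\mathcal{G}_{k_0}(\Phi_{l-1,\delta}^{(i)\prime})$ never forces more regularity than the induction has already supplied. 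Since these verifications are literally the ones already carried out in the unprimed case, no genuinely new difficulty appears, and I would ultimately record the proof as: \emph{identical to that of Lemma~\ref{lem4.4}, with the substitutions indicated above.}
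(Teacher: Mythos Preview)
Your proposal is correct and matches the paper's own approach exactly: the paper states that the proofs of Lemmas~\ref{lem4.5}--\ref{lem4.7} ``follow a completely similar manner and are thus omitted,'' i.e., one replays Lemma~\ref{lem4.4} with the primed quantities and with Lemmas~\ref{lem4.5} and~\ref{lem4.6} replacing Lemmas~\ref{lem4.2} and~\ref{lem4.3}. Your expanded sketch of the nested inductions is accurate and in fact more detailed than what the paper records.
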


With all the preceding preparations, we now are at the position to prove the main results of this paper. 
\begin{proof}{(of Theorem \ref{thm2.1})}
    Suppose that there exists a $l\in\N\cup\{0\}$ and a $x_0\in\Gamma$ such that $D^\alpha\mu_1=D^\alpha\mu_2$ and $D^\alpha \gamma_1=D^\alpha\gamma_2$ on $\Gamma$ when $|\alpha|\leq l$, and $\pa_\nu^{l+1}\mu_1(x_0)\neq\pa_\nu^{l+1}\mu_2(x_0)$ or $\pa_\nu^{l+1}\gamma_1(x_0)\neq\pa_\nu^{l+1}\gamma_2(x_0)$. Without loss of genearlity, we assume that $\pa_\nu^{l+1}\mu_1(x_0)\neq\pa_\nu^{l+1}\mu_2(x_0)$. The case for $\gamma$ can be handled similarly using Theorem \ref{thm3.8} and Lemmas \ref{lem4.5}-\ref{lem4.7}. In view of Lemma \ref{lem4.2}, we have the estimate that 
    \ben
      \|\psi_{l+1,\delta}^{(1)}-\psi_{l+1,\delta}^{(2)}\|_{H^{l+3/2}_t(\pa\Om)}+\|\nu\cdot(\Psi_{l+1,\delta}^{(1)}-\Psi_{l+1,\delta}^{(2)})\|_{H^{l+3/2}(\pa\Om)}\leq C. 
    \enn
    Recalling the definition of $\psi_{l+1,\delta}^{(i)}$ and $\Psi_{l+1,\delta}^{(i)}$, applying Lemma \ref{lem4.4} we obtain that 
    \ben
    \|\psi_{1,\delta}^{(1)}-\psi_{1,\delta}^{(2)}\|_{H^{l+3/2}_t(\pa\Om)}+\|\nu\cdot(\Psi_{1,\delta}^{(1)}-\Psi_{1,\delta}^{(2)})\|_{H^{l+3/2}(\pa\Om)}\leq C. 
    \enn
    From the regularity of $\mu$ and $\gamma$ on $\Gamma$, we see that $[\mu_1(\gamma_1-\gamma_0)-\mu_2(\gamma_2-\gamma_0)]E_{0,\delta}$ are uniformly bounded in $H^{l+1}(\Om)$, which implies that 
    \ben
    &&\left\|\nu\times\na G_{k_0}(\na(\mu_1-\mu_2)\cdot H_{0,\delta})\right\|_{H_t^{l+3/2}(\pa\Om)} \\
    &&\qquad+\left\|\nu\cdot\na G_{k_0}(\na(\mu_1-\mu_2)\cdot H_{0,\delta})\right\|_{H^{l+3/2}(\pa\Om)}\leq C 
    \enn
    uniformly for $\delta\in(0,\delta_0)$. However, this contradicts with Theorem \ref{thm3.7} and the proof is thus complete. 
\end{proof}
%
\begin{remark}\label{rem3.8}
	It is clear that our proof after slight modification still work if we are knowing the local impedance map $\Lambda^{-1}$. 
\end{remark}

\section*{Acknowledgements}
This work was supported by the NNSF of China with grant 12122114. 

\end{document}